\title{Linear Independence of Finite Gabor Systems Determined  by Behavior  at Infinity}
\author{John J. Benedetto and Abdelkrim Bourouihiya}
\date{\today}
\chardef\bslash=`\\ 
\newtheorem{theorem}{Theorem}[section]
\newtheorem{lemma}[theorem]{Lemma}
\newtheorem{proposition}[theorem]{Proposition}
\newtheorem{corollary}[theorem]{Corollary}
\theoremstyle{definition}
\newtheorem{example}[theorem]{Example}
\theoremstyle{definition}
\newtheorem{definition}[theorem]{Definition}
\theoremstyle{definition}
\newtheorem{remark}[theorem]{Remark}
\numberwithin{equation}{section}
\newcommand{\eval}[2][\right]{\relax
  \ifx#1\right\relax \left.\fi#2#1\rvert}
\begin{document}
\maketitle
\markboth{Linear Independence of Finite Gabor Systems Determined  by Behavior  at Infinity}
{Linear Independence of Finite Gabor Systems Determined...}
\renewcommand{\sectionmark}[1]{}

 \begin{abstract}
 We prove that the HRT (Heil, Ramanathan,  and  Topiwala) conjecture holds for finite Gabor systems generated by square-integrable
 functions with certain behavior at infinity. These  functions include  functions ultimately decaying faster than any exponential function,
 as well as  square-integrable functions ultimately analytic and whose germs are in a Hardy field. Two classes of the latter type of functions are the set of square-integrable logarithmico-exponential functions and  the set of square-integrable Pfaffian functions. We also prove the HRT conjecture for certain finite Gabor systems generated by positive functions.
\end{abstract}
\section{ Introduction}
Let $L^2(\mathbb{R})$ be the space of square-integrable functions on the real line $\mathbb{R}$, and denote the $L^2$-norm of $f \in L^2(\mathbb{R})$
as $\left\Vert f \right\Vert_2$. If $g$ is a measurable function on $\mathbb{R}$ and  $\Lambda=\{(\alpha_k,\beta_k)\}_{k=1}^N$ is a set of finitely many distinct points in $\mathbb{R}^2$,   the \emph{finite Gabor system} generated by $g$ and $\Lambda$ is the set
$$\mathcal{G}(g, \Lambda)=\{e^{2\pi i\beta_kx}g(x-\alpha_k)\}_{k=1}^N.$$

In ~\cite{Hei2,Hei1}, the Heil, Ramanathan,  and  Topiwala (HRT) conjecture is stated as follows.
\begin{quote}
    Given $g\in L^2(\mathbb{R}) \setminus \{0\}$
and $\Lambda = \{(\alpha_k, \beta_k)\}_{k=1}^N$. Then $\mathcal{G}(g,\Lambda)$ is a linearly independent set of
functions in $L^2(\mathbb{R})$.
\end{quote}

We shall say that the HRT  conjecture holds for $g\in L^2(\mathbb{R}) \setminus \{0\}$ if the conjecture holds for $\mathcal{G}(g,\Lambda)$ for every set $\Lambda$ of finitely many distinct points in $\mathbb{R}^2$.

Despite the striking simplicity of the statement of the conjecture,
it remains open today. Some partial results, before our paper, include the following.
\begin{enumerate}
  \item If  $g\in L^2(\mathbb{R}) \setminus \{0\}$ is compactly supported, or supported
on a half-line, then the HRT conjecture holds for any value
$N$.
  \item  If $g(x) = p(x)e^{-x^2}$, where $p$ is a nonzero polynomial, then
the HRT conjecture holds for any value $N$.
\item The HRT conjecture  holds for any  $g\in L^2(\mathbb{R}) \setminus \{0\}$  if $N \leq 3$.
  \item If the HRT conjecture holds for a $g\in L^2(\mathbb{R}) \setminus \{0\}$ and $\Lambda$, then there exists an
$\varepsilon > 0$ such that the HRT conjecture holds for any $h \in L^2(\mathbb{R}) \setminus \{0\}$ satisfying $\| g-h
\|_2< \varepsilon$ using the same set $\Lambda$.
\item If the HRT conjecture holds
for $g \in L^2(\mathbb{R}) \setminus \{0\}$ and $\Lambda$, then there exists an
$\varepsilon >0$ such that the HRT conjecture holds for  $g$ and any set of N
points within $\varepsilon-$Euclidean distance of $\Lambda$.
\item  The HRT conjecture holds for any $g \in
L^2(\mathbb{R})\setminus \{0\}$ and any $\Lambda$ contained in some
translate of a full-rank lattice in $\mathbb{R}^2$. Such a lattice
has the form $A(\mathbb{Z}^2)$, where $A$ is an invertible matrix.
\end{enumerate}

Results (1)-(5) are published in the first paper ~\cite{Hei2} about the HRT  conjecture. Result (6) is due to Linnell ~\cite{Lin}. Other partial
results, where $\Lambda$ is not contained in a lattice, are published in ~\cite{Bal,Bow1,Bow2,Dem1,Dem2,Kut,Rze}.

We shall use the behavior of $g$ at infinity to prove that the HRT conjecture holds for several classes of functions. These include the following classes:
\begin{enumerate}
  \item The class of square-integrable functions whose \emph{germs} are analytic and are in a \emph{Hardy field} (Section 2), which includes the the class of  \emph{logarithmico-exponential} functions (see Example 2.3 and ~\cite{Bou,Har,Har2}) and
       the class of \emph{Pfaffian} functions (see Example 2.5 and ~\cite{Kho});
  \item The class of square-integrable functions $g$ such that
   $$\lim_{x \rightarrow \infty} \frac{g(x+\alpha)}{g(x)}$$
   exists for every positive real number $\alpha$ (Section 3);
  \item The class of functions $g$ decaying faster than any exponential function, i.e., $| g |$ is ultimately decreasing and $e^{tx}g(x) \in L^2(\mathbb{R})$, for every $t>0$ (Section 4).
\end{enumerate}

For the second class, we assume that the set of points $\{(\alpha_k, \beta_k)\}_{k=1}^N$, defining the finite Gabor
  system,  satisfies a \emph{difference condition for the second variable}, i.e., at least one of the $\beta_k$ is different from all the others. This class includes the set of differentiable and square-integrable functions $g$ such that
  $$\lim_{x \rightarrow \infty} \frac{g'(x)}{g(x)}$$
  exists in $\mathbb{C} \cup \{-\infty \}$.

Finally, we prove two theorems for finite Gabor systems generated by positive functions (Section 5). The first theorem states that the HRT conjecture holds for finite Gabor systems $\mathcal{G}(g, \{(\alpha_k,\beta_k)\}_{k=1}^N)$ if $g$ is ultimately  positive and $\{\beta_1,\ldots,\beta_N\}$ is linearly independent over $\mathbb{Q}$. The second theorem states that the HRT conjecture holds for every four element Gabor system  generated by an ultimately positive function $g$ if  both $g(x)$ and $g(-x)$ are ultimately decreasing.

In much of what follows we shall use the following propositions.
\begin{proposition}\label{trigo} Let $\beta_1,..., \beta_N \in \mathbb{R}$ be distinct, let $c_1, ..., c_N \in \mathbb{C}$, and let $E \subseteq \mathbb{R}$ have a positive Lebesgue measure. If
\begin{equation*}
\forall x \in E,\quad \sum_{k=1}^N c_k e^{2\pi i \beta_k x} =0,
\end{equation*}
then $c_1=c_2=.......=c_N=0$, see ~\cite{Hei2}.
\end{proposition}

The translation of $g \in L^2(\mathbb{R})$ by $\alpha \in \mathbb{R}$ is the function $T_\alpha g(x)=g(x-\alpha)$; the modulation of $g$ by $\beta \in \mathbb{R}$ is the function $M_\beta g(x)=e^{2\pi i \beta x}g(x)$; and the dilation of $g$ by $r \in \mathbb{R} \setminus \{0\}$ is the function $D_r g(x)= | r |^{\frac{1}{2}} g(rt)$.

\begin{proposition}\label{meta1} If $A$ is a linear transformation of $\mathbb{R}^2$ onto itself with $det A=1$, then there exits a unitary transformation $U_A : L^2(\mathbb{R}) \rightarrow L^2(\mathbb{R})$ such that
$$U_AM_bT_a = c_A(a,b)M_vT_u U_A,$$
where $(u,v) = A(a,b)$ and $c_A(a,b) \in \mathbb{C}$  has the property that $| c_A(a,b) | =1$.
\end{proposition}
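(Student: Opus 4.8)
The plan is to construct $U_A$ as the \emph{metaplectic operator} attached to $A$, exploiting the fact that the group of matrices of determinant one is generated by a short list of elementary symplectic maps whose action on the time--frequency operators $M_b T_a$ is transparent. I would work with three families of unitary operators on $L^2(\mathbb{R})$: the Fourier transform $\mathcal{F}$, the dilations $D_r$ with $r>0$, and the \emph{chirp multiplications} $C_c$, $c \in \mathbb{R}$, defined by $C_c f(x) = e^{\pi i c x^2} f(x)$. Each of these is a unitary map of $L^2(\mathbb{R})$ onto itself, and for each I would verify, by a short direct computation using only $M_b T_a f(x) = e^{2\pi i b x} f(x-a)$ and the commutation rule $T_\alpha M_\beta = e^{-2\pi i \alpha\beta} M_\beta T_\alpha$, that it intertwines $M_b T_a$ with another such operator up to a unimodular phase.

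Specifically, these base computations would yield
\[
\mathcal{F}\, M_b T_a = e^{2\pi i a b}\, M_{-a} T_b\, \mathcal{F},
\]
\[
D_r\, M_b T_a = M_{br} T_{a/r}\, D_r,
\]
\[
C_c\, M_b T_a = e^{-\pi i c a^2}\, M_{b+ca} T_a\, C_c,
\]
which exhibit the intertwining relation of the statement for the matrices
\[
\begin{pmatrix} 0 & 1 \\ -1 & 0 \end{pmatrix}, \quad \begin{pmatrix} 1/r & 0 \\ 0 & r \end{pmatrix}, \quad \begin{pmatrix} 1 & 0 \\ c & 1 \end{pmatrix},
\]
respectively, each with $\det = 1$ and each phase of modulus one. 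The next step is the standard group-theoretic fact that these elementary matrices generate $\mathrm{SL}(2,\mathbb{R})$: for instance, conjugating a lower shear by the matrix of $\mathcal{F}$ produces an upper shear, and upper and lower shears already generate $\mathrm{SL}(2,\mathbb{R})$ by Gaussian elimination over $\mathbb{R}$.

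To pass from the generators to an arbitrary $A$, I would show that the set of matrices admitting such a $U_A$ is a subgroup of $\mathrm{SL}(2,\mathbb{R})$. Closure under products is the crux: if $U_1 M_b T_a = c_1(a,b) M_{v_1} T_{u_1} U_1$ with $(u_1,v_1) = A_1(a,b)$ and $U_2 M_{v_1} T_{u_1} = c_2(u_1,v_1) M_{v_2} T_{u_2} U_2$ with $(u_2,v_2) = A_2(u_1,v_1)$, then $U_2 U_1$ intertwines $A_2 A_1$ with phase $c_1(a,b)\,c_2\bigl(A_1(a,b)\bigr)$, a product of two unimodular numbers, hence unimodular; closure under inverses follows by applying $U_A^{-1}$ on both sides and using $1/c = \overline{c}$ for $|c|=1$. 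Writing a chosen $A$ as a word in the generators and multiplying the corresponding unitaries then produces the desired $U_A$. The main obstacle is not any single computation but the bookkeeping of this phase cocycle: one must track how each factor's phase, evaluated at the successively transformed points, accumulates, and confirm that the product remains a function of $(a,b)$ of modulus one; the operator $U_A$ itself is thereby determined only up to a unimodular scalar, which is all the statement requires.
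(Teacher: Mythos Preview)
Your argument is correct and is precisely the standard construction of the metaplectic representation; the three base intertwining identities for $\mathcal{F}$, $D_r$, and $C_c$ are right, the generation of $\mathrm{SL}(2,\mathbb{R})$ by the corresponding matrices is well known, and your cocycle bookkeeping for products and inverses is exactly what is needed.

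However, there is nothing to compare against: the paper does not supply a proof of this proposition. Immediately after stating it, the authors write that the operators $U_A$ are metaplectic transforms and ``refer to \cite{Gro,Hei2,Hei1} for details.'' Your sketch is essentially the argument one finds in those references (see in particular Gr\"{o}chenig \cite{Gro}, Chapter~9), so in effect you have reproduced the proof the paper outsources rather than taken a different route.
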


The operators $U_A$ are \emph{metaplectic transforms}, and they form a group of linear transformations of $L^2(\mathbb{R})$ onto itself; we refer to ~\cite{Gro,Hei2,Hei1} for details. Translations, modulations, dilations, and the Fourier transform are examples of metaplectic transforms on $L^2(\mathbb{R})$.
\begin{proposition}\label{meta2}
Let $\mathcal{G}(g,\Lambda)$ be a finite Gabor system, and let $U: L^2(\mathbb{R}) \rightarrow L^2(\mathbb{R})$ be a metaplectic transform with associated linear transformation $A:\mathbb{R}^2\rightarrow \mathbb{R}^2$, i.e., $U=U_A$. Then, $\mathcal{G}(g,\Lambda)$ is a linearly independent set of functions in $L^2(\mathbb{R})$ if and only if
$\mathcal{G}(Ug,A(\Lambda))$ is a linearly independent set of functions in $L^2(\mathbb{R})$.
\end{proposition}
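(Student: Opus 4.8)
The plan is to prove Proposition~\ref{meta2} by unwinding the definition of linear independence and using the fact established in Proposition~\ref{meta1} that a metaplectic transform intertwines the time-frequency shifts $M_\beta T_\alpha$ up to a unimodular scalar. Write $\Lambda = \{(\alpha_k,\beta_k)\}_{k=1}^N$, so the system is $\mathcal{G}(g,\Lambda) = \{M_{\beta_k} T_{\alpha_k} g\}_{k=1}^N$. A set of functions is linearly dependent precisely when there exist scalars $c_1,\dots,c_N \in \mathbb{C}$, not all zero, with $\sum_{k=1}^N c_k M_{\beta_k} T_{\alpha_k} g = 0$. So I would prove the contrapositive form of the equivalence: $\mathcal{G}(g,\Lambda)$ is linearly dependent if and only if $\mathcal{G}(Ug, A(\Lambda))$ is linearly dependent.

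The key computation is to apply $U = U_A$ to a dependence relation. Suppose $\sum_{k=1}^N c_k M_{\beta_k} T_{\alpha_k} g = 0$. Applying the linear operator $U$ and using Proposition~\ref{meta1} with $(a,b) = (\alpha_k, \beta_k)$ and $(u_k, v_k) = A(\alpha_k, \beta_k)$, each term transforms as
\begin{equation*}
U M_{\beta_k} T_{\alpha_k} g = c_A(\alpha_k, \beta_k)\, M_{v_k} T_{u_k} U g,
\end{equation*}
so that, by linearity of $U$,
\begin{equation*}
0 = U\!\left(\sum_{k=1}^N c_k M_{\beta_k} T_{\alpha_k} g\right) = \sum_{k=1}^N \big(c_k\, c_A(\alpha_k, \beta_k)\big)\, M_{v_k} T_{u_k} (Ug).
\end{equation*}
Since each modulus $|c_A(\alpha_k,\beta_k)| = 1$, the scalar $c_k\, c_A(\alpha_k,\beta_k)$ is nonzero exactly when $c_k$ is nonzero. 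Hence a nontrivial dependence among $\{M_{\beta_k} T_{\alpha_k} g\}$ yields a nontrivial dependence among $\{M_{v_k} T_{u_k} (Ug)\}$, which is precisely $\mathcal{G}(Ug, A(\Lambda))$. This establishes one direction.

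For the converse I would invoke that the metaplectic transforms form a group, so $U = U_A$ has an inverse that is again a metaplectic transform $U_{A^{-1}}$ associated with the linear transformation $A^{-1}$ (note $\det A^{-1} = 1$). Applying the same argument with $U^{-1}$ to a dependence relation for $\mathcal{G}(Ug, A(\Lambda))$ recovers a dependence relation for $\mathcal{G}(g,\Lambda)$, using that $A^{-1}(A(\Lambda)) = \Lambda$ and that the intertwining scalars are again unimodular. One small bookkeeping point to verify is that $A(\Lambda)$ still consists of $N$ \emph{distinct} points, which holds because $A$ is injective; this guarantees $\mathcal{G}(Ug, A(\Lambda))$ is a genuine finite Gabor system indexed by $N$ distinct points.

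The argument is essentially a direct consequence of Proposition~\ref{meta1}, so there is no deep obstacle; the only point requiring care is the role of the unimodular factors $c_A(\alpha_k,\beta_k)$. The crucial observation is that because $|c_A(\alpha_k,\beta_k)| = 1$ these factors never vanish, so multiplying the coefficients by them preserves which coefficients are zero and which are not, and therefore preserves both trivial and nontrivial dependence relations. I expect this to be the one place where a careless proof could go wrong, and I would state it explicitly rather than absorbing it silently.
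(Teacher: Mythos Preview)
Your argument is correct and is the standard way to deduce Proposition~\ref{meta2} from Proposition~\ref{meta1}. The paper itself does not supply a proof of Proposition~\ref{meta2}; it is stated as a known fact, with the surrounding remarks referring the reader to \cite{Gro,Hei2,Hei1} for details on metaplectic transforms. Your write-up fills in exactly what those references would provide, including the two points that matter: the unimodularity of the scalars $c_A(\alpha_k,\beta_k)$ (so nontriviality of a dependence relation is preserved) and the group property of the metaplectic transforms (so the argument runs symmetrically in both directions).
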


Notationally, $\mathcal{S}(\mathbb{R})$ is the Schwartz space of rapidly decreasing infinitely differentiable functions on $\mathbb{R}$, $\widehat{g}$ denotes the Fourier transform of $g$, and $|E|$ is the Lebesgue measure of $E \subseteq \mathbb{R}$.

\section{Hardy Fields and the HRT Conjecture}
Given a property $P$ defined on a set $X \subseteq \mathbb{R}$, which includes an interval $(a, \infty)$. We say that $P(x)$ \emph{ultimately} holds  if there  is $x_0 \in (a, \infty)$ such that
$P(x)$ holds for all $x > x_0$.

Let $F$ be the set of all functions $f:X_f \rightarrow \mathbb{R}$ such that $(a_f, \infty) \subseteq X_f \subseteq \mathbb{R}$ for some
$a_f \in \mathbb{R}$. We define an equivalence relation $\sim$  on $F$ by writing $f \sim g$ to mean $f(x)= g(x)$ for all $x$ greater than some
$a > \max(a_f,a_g)$, i.e., $f$ is ultimately equal to $g$. The equivalence class associated with $f \in F$ is denoted by $germ(f)$. Addition and multiplication of functions are compatible with
 respect to $\sim$, and so the set $\mathcal{F}= \{ germ(f): f \in F\} $ is a commutative ring.

\begin{definition}A subring $\mathcal{H}$ of $\mathcal{F}$ is a \emph{Hardy field} if it is a field and it is closed under differentiation.\end{definition}

Some known properties of Hardy fields are collected in the  following proposition.

\begin{proposition}\label{prop:hardyfield} Let $E$ be a set of real-valued functions on $\mathbb{R}$ such that the germs of all functions in $E$ are in a Hardy field $\mathcal{H}$.
\begin{enumerate}[(a)]
  \item Every function in $E$ is ultimately  strictly monotone or constant and ultimately has a constant sign.
  \item If $f$ and $g$ are in $E$ and have nonzero germs, then the limit at infinity of $f/g$ or $g/f$ is finite.
If the limit at infinity of $f/g$ is finite we say that $f$ is asymptotically smaller than $g$ and we write $germ(f) \preceq germ(g)$.
  \item The Hardy field $\mathcal{H}$ is well-ordered with respect to the relation $\preceq$.
\end{enumerate}
\end{proposition}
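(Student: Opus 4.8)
The plan is to treat part (a) as the foundational result and derive (b) and (c) from it; the mechanism throughout is that membership in a field forces ultimate nonvanishing, while closure under differentiation lets one transfer sign information from a function to its derivative. For (a), fix $f \in E$, so $germ(f) \in \mathcal{H}$. I would first record that, because $\mathcal{H}$ is closed under differentiation, every germ in $\mathcal{H}$ is represented by an ultimately differentiable, hence ultimately continuous, function. Now the constant-sign claim: if $germ(f)=0$ then $f$ is ultimately $0$ and there is nothing to prove; if $germ(f)\neq 0$ then, since $\mathcal{H}$ is a field, $germ(f)^{-1}=germ(1/f)\in\mathcal{H}$, so $f$ is ultimately nonzero, and a continuous ultimately-nonzero function cannot change sign (a sign change would force a zero by the intermediate value theorem), whence $f$ has ultimately constant sign. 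Monotonicity follows by applying this same sign argument to $f'$, whose germ also lies in $\mathcal{H}$: either $f'$ is ultimately $0$, so $f$ is ultimately constant, or $f'$ has ultimately one strict sign, so $f$ is ultimately strictly increasing or strictly decreasing.

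For (b), let $f,g\in E$ have nonzero germs. Since $\mathcal{H}$ is a field, $germ(f/g)=germ(f)\,germ(g)^{-1}\in\mathcal{H}$, so by (a) the function $f/g$ is ultimately monotone or constant. A monotone function on a half-line has a limit $L\in[-\infty,+\infty]$ at infinity; if $L$ is finite we are done, and if $L=\pm\infty$ then $g/f=1/(f/g)\to 0$, which is finite. Hence at least one of $f/g$, $g/f$ has a finite limit, which is exactly the assertion and legitimizes the definition of $\preceq$.

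For (c), I would verify that $\preceq$ induces a total (linear) ordering, reading the stated ``well-ordered'' as the total order determined by $\preceq$ on germs. Totality is immediate from (b); reflexivity follows from $\lim f/f=1$; and for transitivity, if $germ(f)\preceq germ(g)$ and $germ(g)\preceq germ(h)$ with all germs nonzero, then $f/h=(f/g)(g/h)$ is a product of two functions with finite limits, so $\lim f/h$ is finite and $germ(f)\preceq germ(h)$ (with $germ(0)$ treated as the bottom element when a germ vanishes). Strictly speaking $\preceq$ is a priori only a preorder, since $germ(f)\preceq germ(g)\preceq germ(f)$ means $\lim f/g$ is finite and nonzero rather than equal germs; passing to the quotient by this asymptotic-equivalence relation yields a genuine total order.

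The main obstacle is really the first claim in (a): one must be certain that an element of a Hardy field is represented by an ultimately continuous (indeed differentiable) function, so that the intermediate value theorem applies. Once that is secured, the field axiom controls the sign and closure under differentiation controls the monotonicity, and then (b) and (c) are short deductions. Since these are classical facts about Hardy fields, I would cite the standard references for the underlying structure rather than reprove every detail.
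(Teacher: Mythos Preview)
The paper does not supply its own proof of this proposition; it is introduced as ``some known properties of Hardy fields'' and simply stated. Your sketch is the standard argument for these classical facts and is essentially correct: the field axiom forces ultimate nonvanishing, continuity plus the intermediate value theorem gives constant sign, and applying this to $f'$ (using closure under differentiation) gives ultimate monotonicity; parts (b) and (c) then follow as you indicate. Your caveat that ``well-ordered'' in (c) must be read as ``totally (pre)ordered'' is apt---a Hardy field is linearly ordered by $\preceq$ up to asymptotic equivalence, not well-ordered in the set-theoretic sense---and your flag that one must know elements of $\mathcal{H}$ are represented by ultimately differentiable (hence continuous) functions is exactly the point where the definition of Hardy field does the work.
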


It is elementary to see that the germs of rational functions on $\mathbb{R}$ form a Hardy  field.

\begin{example}
The space, $LE$, of \emph{logarithmico-exponential} functions is the smallest set of ultimately defined real valued functions containing the identity function $I(x)=x$ and every constant function $C(x)=c \in \mathbb{R}$ and closed under the following operations: $f,g \in LE$ implies $f \pm g, fg, f/g \in LE$; if $f \in LE$ then $e^f \in LE$; if $f \in LE$ is ultimately positive then $\log f \in LE$; and if $f \in LE$ then $\sqrt[n]{f} \in LE$, for every integer $n>0$. For example, $\exp (\sqrt{\log x}/\log \log x) \in LE.$

Hardy introduced the class $LE$ in 1910 ~\cite{Har,Har2}; and he proved the fundamental fact that \emph{the germs of $LE$ functions form a Hardy field}. His motivation was to interpret the idea of a scale of infinities.

The apparent specificity of the space, $LE$, is in contrast to its broad applicability. For example, $LE$ and more general Hardy fields play a role in model theory (logic), e.g., ~\cite{Kuh}, time complexity in theoretical computer science, e.g., ~\cite{Cai}, differential equations, e.g., ~\cite{Hart,Mar}, and, of course, Tauberian Theory, e.g., ~\cite{Kar,Kor}.
\end{example}

\begin{theorem}\label{thm:hrthardyfield}  Let $E$ be a real vector space of real-valued functions on $\mathbb{R}$ such that each $f \in E$ has the properties that it is ultimately analytic and
$germ(f)$ is in a Hardy field $\mathcal{H} = \mathcal{H}_E$. Assume that $E$ is closed under all real translations. Let $\mathcal{E}$ be the complex vector space
generated by $E$. The HRT conjecture holds for $\mathcal{G}(g, \Lambda)$ for each $g \in \mathcal{E} \cap L^2(\mathbb{R})\setminus \{0\}$ and arbitrary $\Lambda$.
\end{theorem}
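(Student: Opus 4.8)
The plan is to argue by contradiction. Suppose a nontrivial relation $\sum_{k=1}^N c_k e^{2\pi i\beta_k x}g(x-\alpha_k)=0$ holds for a.e. $x$. Writing $g=u+iv$ with $u,v\in E$, every translate of $g$ has real and imaginary parts whose germs lie in $\mathcal{H}$, and by Proposition~\ref{prop:hardyfield}(a) $u,v$ are ultimately monotone; since $g\in L^2(\mathbb{R})$ this forces $g(x)\to0$, so each component of germ$(g)$, if nonzero, is ultimately strictly monotone. Collecting the terms that share a common translate and letting $a_1<\cdots<a_m$ be the distinct values of $\alpha_k$, the relation becomes $\sum_{j=1}^m g(x-a_j)P_j(x)=0$, where each $P_j(x)=\sum_{\alpha_k=a_j}c_k e^{2\pi i\beta_k x}$ is a trigonometric polynomial and not every $P_j$ vanishes. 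If $m=1$ then $g(x-a_1)P_1(x)=0$ a.e.; since $g\neq0$ on a set of positive measure, $P_1$ vanishes on such a set and Proposition~\ref{trigo} gives $P_1\equiv0$, a contradiction. If germ$(g)=0$ then $g$ is supported on a half-line and we are in the known half-line case, so I may assume germ$(g)\neq0$. The argument then splits according to whether the germs of the translates $\{g(\,\cdot-a_j)\}_{j=1}^m$ are linearly independent over $\mathbb{C}$.

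\emph{The independent case.} Here I would prove the following lemma: if $f_1,\dots,f_n$ are ultimately analytic with germs (real and imaginary parts) in $\mathcal{H}$ and linearly independent over $\mathbb{C}$, and $\sum_i f_i(x)Q_i(x)=0$ for all large $x$ with each $Q_i$ a trigonometric polynomial, then every $Q_i\equiv0$. The proof is an induction on $n$ by peeling off a dominant germ: using the well-ordering of Proposition~\ref{prop:hardyfield}(c) choose $f_{i_0}$ maximal for $\preceq$ (comparing the germs $|f_i|^2\in\mathcal{H}$); by comparability (Proposition~\ref{prop:hardyfield}(b)) and monotonicity each ratio $f_i/f_{i_0}$ is bounded and ultimately monotone in its real and imaginary parts, hence $f_i/f_{i_0}\to\lambda_i\in\mathbb{C}$ with $\lambda_{i_0}=1$. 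Dividing by $f_{i_0}$, letting $x\to\infty$, and using that a trigonometric polynomial tending to $0$ at $\infty$ must be identically $0$ (via Ces\`aro means / almost periodicity), I obtain the scalar relation $\sum_i\lambda_i Q_i\equiv0$; substituting $Q_{i_0}=-\sum_{i\neq i_0}\lambda_i Q_i$ back yields $\sum_{i\neq i_0}(f_i-\lambda_i f_{i_0})Q_i=0$, whose germs $f_i-\lambda_i f_{i_0}$ are again linearly independent, so induction finishes. Applying this to $\sum_j g(\,\cdot-a_j)P_j=0$ forces every $P_j\equiv0$, the desired contradiction.

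\emph{The dependent case.} A nontrivial germ-relation $\sum_j\nu_j g(x-a_j)=0$ (valid ultimately) says that germ$(g)$ solves a linear constant-coefficient difference equation; the rigidity of Hardy fields (their elements are ultimately monotone with genuine asymptotic scales, so no oscillatory solution survives) forces germ$(g)$ to coincide with a real exponential polynomial $G_0(x)=\sum_r P_r(x)e^{\rho_r x}$, $\rho_r\in\mathbb{R}$, which extends to an \emph{entire} function. I would then write $g=G_0+w$, where $w:=g-G_0$ vanishes for large $x$ and is therefore supported in a half-line $(-\infty,x_0]$; since $g\in L^2(\mathbb{R})$ while a nonzero real exponential polynomial is never square-integrable, $w\not\equiv0$ and $x_0$ is finite. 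Substituting, the entire part $A(x):=\sum_k c_k G_0(x-\alpha_k)e^{2\pi i\beta_k x}$ vanishes on $(x_0+a_m,\infty)$, hence $A\equiv0$ by the identity theorem, so $\sum_k c_k w(x-\alpha_k)e^{2\pi i\beta_k x}=0$ a.e. Localizing at the right edge $x_0$ of $\mathrm{supp}(w)$: on an interval just below $x_0+a_m$ only the terms with $\alpha_k=a_m$ survive, so $w(x-a_m)P_m(x)=0$ there, and because $x_0$ is the essential supremum of $\mathrm{supp}(w)$, $w(\,\cdot-a_m)\neq0$ on a set of positive measure; Proposition~\ref{trigo} then gives $P_m\equiv0$. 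This removes the translate $a_m$, and I induct on $m$.

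The main obstacle is precisely this dependent case. The asymptotic machinery controls $g$ only at $+\infty$, and there it genuinely loses information: a nonzero \emph{ultimate} relation among translates of $g$ really can occur (for instance when germ$(g)$ is exponential), so one cannot conclude $c_k=0$ from behaviour at infinity alone. The two ingredients that break the impasse are (i) the structural fact that such a dependence forces germ$(g)$ to be an entire exponential polynomial, and (ii) the recovery of the global ($L^2$, full-line) hypothesis through the half-line support of $w$ together with Proposition~\ref{trigo}. Establishing (i) rigorously—that a Hardy-field germ annihilated by a difference operator must be an exponential polynomial—is the most delicate point, and is where I expect the real work to lie.
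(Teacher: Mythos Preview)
Your independent-case lemma is sound, but the organization forces you into a dependent case that the paper simply never meets, and the gap you yourself flag there is real. The claim that a germ in a Hardy field which satisfies a constant-coefficient translation relation must agree ultimately with an exponential polynomial is not established; a linear difference equation with incommensurable real shifts has an infinite-dimensional analytic solution space, and ``Hardy-field rigidity'' is an intuition, not an argument. Since the dependent case genuinely occurs (e.g.\ $g$ ultimately equal to $e^{-x}$, which lies in $LE$), your proof as written is incomplete.

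The paper sidesteps this entirely by grouping the assumed relation by distinct \emph{modulations} rather than distinct translations: one writes
\[
\sum_{k=1}^{N} e^{2\pi i\beta_k x}\,g_k(x)=0,\qquad g_k(x)=\sum_{n} c_{(k,n)}\,g(x-\alpha_{(k,n)}),
\]
with the $\beta_k$ distinct. The point is that translates of any nonzero $L^2$ function are linearly independent in $L^2$ (Proposition~\ref{trigo} after a Fourier transform), so every $g_k$ is nonzero as an $L^2$ function; there is no ``dependent case'' to worry about. To upgrade ``nonzero in $L^2$'' to ``nonzero germ'', the paper first shows (by iterating the relation to push all arguments of $g$ arbitrarily far to the right, where $g$ is analytic) that $g$ may be taken analytic on all of $\mathbb{R}$; then each $g_k$ is analytic and hence not ultimately zero. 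With every $germ(g_k)\neq 0$ and real/imaginary parts in $E\subset\mathcal{H}$, one picks the $\preceq$-maximal germ, divides, passes to a subsequence $x_n\to\infty$ along which each $e^{2\pi i\beta_k x_n}$ converges, and obtains a nontrivial relation $\sum_k z_kL_k e^{2\pi i\beta_k x}=0$ with distinct $\beta_k$, contradicting Proposition~\ref{trigo}. This is a single limit step, with no induction and no case split.

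So the comparison is: your $\alpha$-grouping makes the ``coefficients'' $P_j$ genuine trigonometric polynomials but leaves the functions (translates of $g$) possibly germ-dependent; the paper's $\beta$-grouping makes the ``coefficients'' single exponentials with distinct frequencies and transfers the burden to the functions $g_k$, whose nonvanishing is automatic from $L^2$-theory plus the analyticity step. The second route is both shorter and complete.
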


\begin{proof}
{\it i. }Let $g \in \mathcal{E} \cap L^2(\mathbb{R}) \setminus \{0\}$ and suppose that the HRT conjecture does not hold for $\mathcal{G}(g,
\Lambda)$ for some finite subset $\Lambda=\{(\alpha_k,\beta_k)\}_{k=1}^N$. In part \emph{ii}, we prove that we may assume without loss of generality that  $g$ is analytic on $\mathbb{R}$. After a convenient relabeling in part \emph{iii}, we use the fact that a Hardy field is well-ordered with respect to the relation $\preceq$ (Proposition \ref{prop:hardyfield}) in part \emph{iv}, and this will yield the desired contradiction.

{\it ii. }Assume that the HRT conjecture fails for some $\Lambda$. Using Proposition 1.3 and relabeling,  we suppose without loss of generality that
\begin{eqnarray}
\sum_{k=1}^M c_k e^{2 \pi i \beta_k x} g(x)= \sum_{k=M+1}^N c_k e^{2 \pi i \beta_k x}g(x+ \alpha_k) \quad \mbox{a.e},
\end{eqnarray}
where $\alpha_1, \ldots, \alpha_N>0$, $c_1,\ldots, c_N \in \mathbb{C} \setminus \{0\}$, $\beta_{1}, \ldots, \beta_N \in \mathbb{R}$, and $\beta_{1}, \ldots, \beta_M $ are distinct. Then, we compute
\begin{eqnarray*}
  &&\prod_{j=M+1}^N p(x+\alpha_j) p(x)g(x)= \sum_{k=M+1}^N c_k e^{2 \pi i \beta_k x}\prod_{j=M+1}^N p(x+\alpha_j)g(x+ \alpha_k)\\
  &&= \sum_{k,l=M+1}^N c_k  c_le^{2 \pi i \beta_l \alpha_k}e^{2 \pi i (\beta_k+\beta_l) x}p_k(x)
  g(x+ \alpha_k + \alpha_l) \quad \mbox{a.e},\\
\end{eqnarray*}
where
\begin{eqnarray*}
p(x) = \sum_{m=1}^M c_m e^{2 \pi i \beta_m x} \ \  \mbox{ and } p_k(x)=\prod_{j \in \{M+1,\ldots,N\} \setminus \{k\}}p(x+\alpha_j),
\end{eqnarray*}
for each $k \in \{M+1,\ldots,N\}$.

We already know that $g$ is ultimately analytic, i.e.,  $g$ is analytic on an interval
 $(A, \infty)$, for some real number $A$. Let $a \in \mathbb{R}$. Iterating the above procedure, as many times as needed, we can find an equality similar to (2.1)  with $a+\alpha_k> A$, for each $k \in \{M+1,\ldots,N\}$. Therefore, the right-hand side of (2.1) is analytic for all $x>a$. In other words, we proved the following. For each $a \in \mathbb{R}$ there exist $P_a$ and $G_a$ such that $g(x) = G_a(x)/P_a(x)$ for almost all $x>a$, where $P_a$ is a trigonometric polynomial and $G_a$ is a linear combination of time-frequency shifts of $g$ that are analytic on $(a,\infty)$. Therefore, $P_a$ and $G_a$ are analytic on $(a,\infty)$, and  hence, for each $x_0 >a$, there is an open interval $I$ containing $x_0$ and there is $n \in \mathbb{Z}$ such that
\begin{eqnarray}
  \forall x \in I, \quad \frac{G_a(x)}{P_a(x)} = (x-x_0)^n H_a(x),  \nonumber
\end{eqnarray}
where $H_a$ is analytic and never vanishes on $I$. Since $g$ is square-integrable and $g(x) = G_a(x)/P_a(x)$ for almost all $x \in I$, then,  $G_a/P_a \in L^2(I)$, and so $n \geq 0$. Therefore, $G_a/P_a$ is analytic on $I$, and, consequently, $G_a/P_a$ is analytic on $(a,\infty)$.

If $a,b \in \mathbb{R}$, then ,  $G_a(x)/P_a(x) = G_b(x)/P_b(x)$ for almost all $x > \max(a,b)$; and the fact that $G_a/P_a$ and $ G_b/P_b$ are analytic on $(\max(a,b),\infty)$ implies that $G_a(x)/P_a(x) = G_b(x)/P_b(x)$ for all $x > \max(a,b)$. Thus,    $\widetilde{g}(x)=G_a(x)/P_a(x)$, where $a$ is any real number less than $x$, is a well defined function that is analytic on $ \mathbb{R}$; and for all $n \in \mathbb{Z}$, we have $\widetilde{g}(x) = g(x)$ for almost all $x>n$, i.e., $\mid \{x: \widetilde{g}(x) \neq g(x) \mbox{ and } x >n   \} \mid =0$  for each  $n \in \mathbb{Z}$, and so
\begin{eqnarray*}
  \mid \{x \in \mathbb{R}: \widetilde{g}(x) \neq g(x) \}\mid = \mid \bigcup_{n \in \mathbb{Z}}\{x: \widetilde{g}(x) \neq g(x) \mbox{ and } x >n   \} \mid =0,
\end{eqnarray*}
i.e., $\widetilde{g}= g$ almost everywhere. This with the fact that $\widetilde{g}$ is analytic on $ \mathbb{R}$ imply that (2.1) holds for $\widetilde{g}$ everywhere. Therefore, without loss of generality, we assume for the rest of the proof  that $g$ is analytic on $\mathbb{R}$ and that (2.1) holds everywhere.

{\it iii. }After relabeling, we may suppose that
\begin{eqnarray}\label{eqn:2.2}
  \sum_{k=1}^N e^{2\pi i \beta_kx} g_k(x) =0,
\end{eqnarray}
where $\beta_1,\ldots, \beta_N \in \mathbb{R}$ are distinct and, for each $k=1, 2,\ldots, N$,
\begin{eqnarray*}
  g_k(x) = \sum_{n=1}^{N_k} c_{(k,n)}g(x-\alpha_{(k,n)}),
\end{eqnarray*}
where $c_{(k,1)}, c_{(k,2)},\ldots,c_{(k,N_k)} \in \mathbb{C} \setminus \{ 0 \}$ and $\alpha_{(k,1)},\alpha_{(k,2)},\ldots,\alpha_{(k,N_k)} \in \mathbb{R}$.

{\it iv. }By Proposition \ref{trigo} and taking the Fourier transform we note that, for each $k=1, 2,\ldots, N$, $\{T_{\alpha_{(k,n)}}g\}_{n=1}^{N_k}$ is a linearly independent set of functions, cf. ~\cite{Hei1,Ros}. Thus, $g_k$ is not identically
equal to zero. Using the fact that $g_k$ is analytic, we obtain that $g_k$ is not ultimately equal to zero. Therefore, and since $E$ is closed under
translations, there are $f_k,h_k \in E$ such that $g_k=f_k+ih_k$ for which  $germ(f_k) \neq 0$ or $germ(g_k) \neq 0$. In particular  if $germ(f)$ is
the maximum of $\{ germ(f_k), germ(h_k): k=1,2,\ldots, N\}$ with respect to the relation $\preceq$, then $germ(f) \neq 0$.

Equation (2.2) can be rewritten as
\begin{eqnarray*}
  \sum_{k=1}^N e^{2\pi i \beta_kx} (f_k(x)+i g_k(x)) =0,
\end{eqnarray*}

Now let $\{x_n\} \subseteq \mathbb{R}$ be a  sequence  converging to infinity such that
\begin{eqnarray*}
  \forall k = 1, \ldots, N, \quad \lim_{n \rightarrow \infty} e^{2\pi i \beta_kx_n} = L_k.
\end{eqnarray*}
Then, we compute
\begin{eqnarray*}
  \lim_{n \rightarrow \infty} \sum_{k=1}^N e^{2\pi i \beta_k(x+x_n)} \frac{f_k(x+x_n)+i g_k(x+x_n)}{f(x+x_n)} =0.
\end{eqnarray*}
Using Proposition \ref{prop:hardyfield}, we obtain
\begin{eqnarray*}
  \lim_{n \rightarrow \infty}  \frac{f_k(x+x_n)+i g_k(x+x_n)}{f(x+x_n)} =z_k,
\end{eqnarray*}
where $z_1, z_2, . . . , z_{N} \in \mathbb{C }$. Therefore, we have
\begin{eqnarray*}
  \sum_{k=1}^N z_k L_k e^{2\pi i \beta_kx}=0.
\end{eqnarray*}
This contradicts Proposition \ref{trigo}, because $\beta_1, \ldots, \beta_N \in \mathbb{R}$ are distinct, $L_k \neq 0$ for each $k \in \{1, 2, \ldots,N \}$,
and $z_{k} \neq 0$, for at least one $k \in \{ 1, 2, \ldots, N \}$.
\end{proof}

\begin{example}
\emph{(a)} The class of $LE$-functions satisfies the conditions of Theorem \ref{thm:hrthardyfield}. Thus, the HRT conjecture holds for every
$g \in \mathcal{E} \cap L^2(\mathbb{R}) \setminus \{0\}$, where $\mathcal{E}$ is the complex vector space generated by $LE$-functions. For example, the HRT conjecture holds for the function
$$g(x)=\frac{e^{-|x|}}{1+\sqrt{\mid x \mid}}+  \frac{\log|x|}{1+ix\log|x|}.$$

\emph{(b)} Let $E$ be the class of real-valued analytic functions $g_1$ defined as follows: $g_1 \in E$ if there exist $N-1$ analytic functions $g_2,\ldots, g_N$ such
that $(g_1,g_2,\ldots,g_N)$ is a solution of a system of first degree differential equations having the form
$$\frac{dy_n}{dt}=\sum_{k=1}^N p_k(t,y_1,\ldots,y_N), \ \ n=1,\ldots,N,$$
where $p_1, \ldots,p_N$ are polynomials of $(N+1)$ variables. The elements of $E$ are called
\emph{Pfaffian functions}; and the germs of  such functions form a Hardy field ~\cite{Kho}.
Thus, by Theorem \ref{thm:hrthardyfield}, the HRT conjecture holds for any square-integrable linear combination (with complex coefficients) of functions in $E$.
\end{example}

\begin{remark}
Pfaffian functions were introduced by Khovanskii ~\cite{Kho}. They include many, but not all, elementary functions, as well as some special functions.
Khovanskii also proved that the germs of functions built from LE and trigonometric functions form a Hardy field, provided that the arguments of the
$sine$ and $cosine$ functions are bounded ~\cite{Kho}, e.g.,
$$ f(x)=\sin(\frac{x}{1+x^2})e^{-\sqrt{|x|}}.$$
Thus, by Theorem \ref{thm:hrthardyfield}, the HRT conjecture holds for such functions.

There are other classes of functions satisfying the conditions of Theorem \ref{thm:hrthardyfield}. These include \emph{D-finite functions} defined in ~\cite{Sta}.

Liouville proved ``elementary integrability" criteria allowing one to assert that certain integrals, most famously $\int e^{-x^2}dx$, cannot be expressed ``in elementary terms"; of course, ``elementary" has to be defined in a precise way, see ~\cite{Con,Ros1,Ros2}. We mention this since, if we replace $E$
in Theorem \ref{thm:hrthardyfield} by a space generated by $E$ and  the primitives of all functions in $E$, we can still conclude that the linear independence conclusion holds for
finite linear combinations of square integrable functions belonging to the new space.
\end{remark}

The proofs of the following theorems are similar to the proof of Theorem \ref{thm:hrthardyfield}.

\begin{theorem}\label{thm:hrthardyfield2} Let $f \in  L^2(\mathbb{R}) \setminus \{0\}$ have the properties that $f$ is  analytic on $\mathbb{R}$ and $germ(f)$ is in a Hardy field
$\mathcal{H}$ that is closed under all real translations. Assume $h \in  L^2(\mathbb{R})$ satisfies the condition,
\begin{eqnarray*}
\lim_{x \rightarrow \infty } \frac{h(x)}{f(x)} = 0.
\end{eqnarray*}
The HRT conjecture holds for $\mathcal{G}(f+h, \Lambda)$, where $\Lambda$ is arbitrary.
\end{theorem}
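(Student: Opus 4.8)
The plan is to argue by contradiction along the architecture of the proof of Theorem~\ref{thm:hrthardyfield}, the essential new feature being that only $f$ carries the Hardy-field structure while $h$ is controlled purely through its asymptotic smallness. Suppose $\mathcal{G}(f+h,\Lambda)$ were linearly dependent. As in parts \emph{ii}--\emph{iii} of the previous proof, Proposition~\ref{trigo} and a relabeling let me write the dependence as
\begin{equation*}
\sum_{k=1}^{N} e^{2\pi i\beta_k x}\bigl(F_k(x)+H_k(x)\bigr)=0 \quad\text{a.e.},
\end{equation*}
where the $\beta_k$ are distinct, $F_k=\sum_n c_{(k,n)}T_{\alpha_{(k,n)}}f$ and $H_k=\sum_n c_{(k,n)}T_{\alpha_{(k,n)}}h$ with all $c_{(k,n)}\neq 0$. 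One simplification relative to Theorem~\ref{thm:hrthardyfield} is that $f$ is already analytic on all of $\mathbb{R}$, so part \emph{ii} (the passage from ``ultimately analytic'' to ``analytic'') is unnecessary; only the analyticity and the Hardy-field germ of $f$ will be used, and $h$ is left merely in $L^2(\mathbb{R})$.

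I next record the structural facts. Each $F_k\not\equiv 0$ by linear independence of the translates of $f$ (via the Fourier transform and Proposition~\ref{trigo}), and, $f$ being analytic, no $F_k$ is ultimately zero. Writing $F_k=\phi_k+i\psi_k$ with $\phi_k,\psi_k$ real combinations of translates of $f$, their germs lie in $\mathcal{H}$, and by Proposition~\ref{prop:hardyfield} each is ultimately of one sign and any two are comparable under $\preceq$. Moreover, applying Theorem~\ref{thm:hrthardyfield} to $E=\operatorname{span}_{\mathbb{R}}\{T_\alpha f:\alpha\in\mathbb{R}\}$, a real translation-closed space of analytic functions with germs in $\mathcal{H}$, shows that $f$ itself satisfies the HRT conjecture, a fact I will need in the degenerate case below.

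For the main line of the argument, let $\Phi$ realize the maximum of $\{germ(\phi_k),germ(\psi_k)\}$ under $\preceq$; then $germ(\Phi)\neq 0$ and $\Phi$ is ultimately of one sign. Choose $x_n\to\infty$ with $e^{2\pi i\beta_k x_n}\to L_k$, $|L_k|=1$ (Bolzano--Weierstrass), divide the displayed identity by $\Phi(x+x_n)$, and let $n\to\infty$. By Proposition~\ref{prop:hardyfield}(b), each $F_k(x+x_n)/\Phi(x+x_n)\to z_k\in\mathbb{C}$, with $z_{k_0}\neq 0$ for the index achieving the maximum. The genuinely new ingredient is the vanishing of the perturbation: for each translate occurring in $H_k$ I factor
\begin{equation*}
\frac{h(x-\alpha)}{\Phi(x)}=\frac{h(x-\alpha)}{f(x-\alpha)}\cdot\frac{T_\alpha f(x)}{\Phi(x)},
\end{equation*}
where the first factor tends to $0$ by hypothesis and the second is bounded whenever $germ(T_\alpha f)\preceq germ(\Phi)$ (Proposition~\ref{prop:hardyfield}(b)); then $H_k/\Phi\to 0$, the frozen limit reduces to $\sum_{k}z_k L_k e^{2\pi i\beta_k x}=0$, and Proposition~\ref{trigo} yields the contradiction since the $\beta_k$ are distinct, $L_k\neq 0$, and $z_{k_0}\neq 0$.

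The main obstacle is precisely the comparison $germ(T_\alpha f)\preceq germ(\Phi)$ needed to annihilate $H_k$: it can fail only if the asymptotically largest translate germ $\mu=\max_j germ(T_{\alpha_j}f)$ cancels inside \emph{every} modulation group, forcing $germ(\Phi)\prec\mu$ while the perturbation still reaches order $\mu$. I expect this degenerate cancellation to be the crux. To treat it I would instead normalize by a fixed dominant translate $T_{\alpha_0}f$ with $germ(T_{\alpha_0}f)=\mu$; now $H_k/T_{\alpha_0}f\to 0$ \emph{unconditionally}, since every translate germ is $\preceq\mu$, and the frozen limit gives $\sum_{j\in S}c_j\lambda_j L_j e^{2\pi i\beta_j x}=0$, where $S$ indexes the dominant shifts and $\lambda_j=\lim T_{\alpha_j}f/T_{\alpha_0}f\neq 0$. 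If some surviving modulation value is carried by a single dominant shift, Proposition~\ref{trigo} finishes immediately; otherwise one obtains genuine cancellation relations $\sum_{j}c_j\lambda_j=0$ within each modulation class and descends through the finitely many $\preceq$-levels of $\mathcal{H}$, using linear independence of the translates of $f$ together with the fact that $f$ satisfies HRT, until the configuration reduces to nontrivial pure shift-relations $g(\cdot)=\kappa\, g(\cdot-d)$, which are impossible in $L^2(\mathbb{R})$. Managing this descent while keeping the perturbation negligible at each scale is the step I expect to require the most care.
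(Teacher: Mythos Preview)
Your main line --- group by distinct modulations, show each $F_k$ is analytic and not ultimately zero, split into real and imaginary parts lying in $\mathcal{H}$, take $\Phi$ to be their $\preceq$-maximum, divide, freeze along a sequence $x_n\to\infty$, and invoke Proposition~\ref{trigo} --- is exactly the paper's argument. The paper is terser than you are: after recording that $germ(u)\neq 0$ it says only ``we obtain a contradiction as in the last steps in the proof of Theorem~\ref{thm:hrthardyfield}'' and never discusses how the $h_k$-terms disappear in the limit. So on the part you actually carry out you match the paper, and you go further by making explicit that what is needed is $H_k/\Phi\to 0$, which in turn requires $germ(T_\alpha f)\preceq germ(\Phi)$ for every shift $\alpha$ occurring.

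The obstacle you flag is genuine, but your proposed descent does not close it. Normalizing by a dominant translate $T_{\alpha_0}f$ of germ $\mu$ does force $H_k/T_{\alpha_0}f\to 0$ unconditionally, yet the frozen limit then yields only $\sum_{n\in S_k}c_{(k,n)}\lambda_{(k,n)}=0$ for every $k$, which is precisely the statement that all $u_k,v_k$ lie strictly below $\mu$ --- the degenerate case itself, not a contradiction. To go further you must renormalize by some $\Psi$ with $germ(\Psi)\prec\mu$ (for instance $\Psi=\Phi$). But the only hypothesis on $h$ is $h=o(f)$, hence $T_\alpha h=o(T_\alpha f)=o(\mu)$ and nothing finer: $h$ carries no Hardy-field structure, so there is no comparison between $T_\alpha h$ and $\Psi$, and nothing excludes $|\Psi|\ll |T_\alpha h|\ll \mu$. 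In that regime $H_k/\Psi$ is unbounded and the limiting identity at scale $\Psi$ carries no information. The phrase ``finitely many $\preceq$-levels'' is also misleading here: after cancellation $germ(\Phi)$ can land at an arbitrary level of $\mathcal{H}$ below $\mu$, not only at one of the finitely many translate-germs, and the asserted reduction to a pure shift-relation $g(\cdot)=\kappa\,g(\cdot-d)$ is not substantiated by anything in the argument. In short, the perturbation is controlled only at the top scale $\mu$; any proof that must look below $\mu$ needs a mechanism different from the one you sketch, and the paper does not supply one either.
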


\begin{proof}
If the HRT conjecture does not hold for $\mathcal{G}(f+h,\Lambda)$, for some finite set $\Lambda \subset \mathbb{R}^2$,
we may suppose that
\begin{eqnarray*}
  \sum_{k=1}^N e^{2\pi i \beta_kx} (f_k(x)+h_k(x)) =0 \quad \mbox{a.e},
\end{eqnarray*}
where $\beta_1,\ldots, \beta_N$ are distinct real numbers and, for each $k=1, 2,\ldots, N$,
\begin{eqnarray*}
  f_k(x) = \sum_{n=1}^{N_k} c_{(k,n)}f(x-\alpha_{(k,n)}) \ \ \  \mbox{ and } \ \ \ h_k(x) = \sum_{n=1}^{N_k} c_{(k,n)}h(x-\alpha_{(k,n)}),
\end{eqnarray*}
where $ c_{(k,1)}, c_{(k,2)},\ldots,c_{(k,N_k)} \in \mathbb{C} \setminus \{ 0 \}$ and $\alpha_{(k,1)},\alpha_{(k,2)},\ldots,\alpha_{(k,N_k)} \in \mathbb{R}$.

Using an argument similar to the steps in the proof of Theorem \ref{thm:hrthardyfield}, we can prove that $ f_k$ is not ultimately equal to zero, for each $k \in \{1,2,\ldots, N\}$.
Then $ f_k = u_k+iv_k$, where $germ(u_k),germ(v_k) \in \mathcal{H}$ and $germ(u_k)\neq 0$ or $germ(v_k)\neq 0$, for each $k \in \{1,2,\ldots, N\}$.
In particular, if $germ(u)$ is the maximum of $\{ germ(u_k), germ(v_k): k=1,2,\ldots, N\}$ with respect to the relation $\preceq$, then $germ(u) \neq 0$. Therefore, we obtain a contradiction as in the last steps in the proof of Theorem \ref{thm:hrthardyfield}.
\end{proof}

\begin{corollary}
Let $f \in L^2(\mathbb{R})\setminus \{0\}$ be a rational function, let $h \in \mathcal{S}(\mathbb{R})$,  and take $(a,b) \in \mathbb{R}^2\setminus \{(0,0)\}$ and $t>0$. The HRT conjecture holds for $\mathcal{G}(h(x)+ae^{-t|x|} + bf(x), \Lambda)$, where $\Lambda$ is arbitrary.
\end{corollary}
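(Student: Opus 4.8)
The plan is to reduce the statement to \thmref{thm:hrthardyfield2} by writing the generating function
$g(x)=h(x)+ae^{-t|x|}+bf(x)$ as a sum $F+H$, where $F\in L^2(\mathbb{R})\setminus\{0\}$ is analytic on $\mathbb{R}$ with $germ(F)$ in the Hardy field of rational germs (which is closed under all real translations), and the remainder $H\in L^2(\mathbb{R})$ satisfies $\lim_{x\to\infty}H(x)/F(x)=0$. The role of $F$ is played by the slowest–decaying ``analytic Hardy-field'' constituent of $g$, and the argument splits according to whether the rational part is present.

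First I would treat the case $b\neq 0$. Since $f$ is rational and lies in $L^2(\mathbb{R})$, its denominator has no real zeros, so $f$ is analytic on all of $\mathbb{R}$ and decays like $|x|^{-m}$ for some integer $m\geq 1$; in particular $germ(bf)$ lies in the Hardy field of rational germs. Take $F=bf$ and $H=h+ae^{-t|x|}$. Both $h\in\mathcal{S}(\mathbb{R})$ and $e^{-t|x|}$ decay faster than every polynomial, while $F$ decays only polynomially, so $H/F\to 0$ as $x\to\infty$, and $H\in L^2(\mathbb{R})$. \thmref{thm:hrthardyfield2} then gives the HRT conjecture for $\mathcal{G}(g,\Lambda)$.

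The remaining case is $b=0$, $a\neq 0$, where $g=h+ae^{-t|x|}$, and this is where the main obstacle appears: the only candidate for $F$ is the exponential, yet $e^{-t|x|}$ fails to be analytic at the origin and, more seriously, a general Schwartz function need \emph{not} decay faster than $e^{-tx}$ (e.g.\ a smoothed $e^{-\sqrt{|x|}}$), so one cannot take $F\sim ae^{-tx}$ and $H=h$ and hope that $H/F\to 0$. I would resolve this by passing to the Fourier transform. Since the Fourier transform is a metaplectic transform, Proposition~\ref{meta2} reduces the problem to linear independence of $\mathcal{G}(\widehat{g},A(\Lambda))$, and as $\Lambda$ ranges over all finite sets so does $A(\Lambda)$. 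Now
\begin{equation*}
\widehat{g}(\xi)=\widehat{h}(\xi)+a\,\frac{2t}{t^{2}+4\pi^{2}\xi^{2}},
\end{equation*}
so the two-sided exponential becomes a nonzero rational $L^2$ function with no real poles, while $\widehat{h}\in\mathcal{S}(\mathbb{R})$. This is exactly the configuration of the first case: take $F=a\,\frac{2t}{t^{2}+4\pi^{2}\xi^{2}}$ (analytic on $\mathbb{R}$, with rational germ) and $H=\widehat{h}$, observe $H/F\to 0$, and apply \thmref{thm:hrthardyfield2} once more.

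I expect the genuine difficulty to be confined to the $b=0$ case just described; the Fourier transform is the decisive device, since it interchanges the two-sided exponential with a rational (Lorentzian) function, converting a term that a Schwartz perturbation might decay past into one whose polynomial decay is strictly slower than that of any Schwartz function. The remaining points are routine: that a rational $L^2$ function is analytic on $\mathbb{R}$ because it has no real poles, that the Hardy field of rational germs is closed under real translations, and the elementary evaluation of $\widehat{e^{-t|x|}}$.
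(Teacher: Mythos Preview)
Your proposal is correct and matches the paper's approach exactly: apply \thmref{thm:hrthardyfield2} directly when $b\neq 0$ (with the rational part as the Hardy-field function $F$), and in the case $b=0$, $a\neq 0$ pass to the Fourier side so that $ae^{-t|x|}$ becomes a nonzero rational $L^2$ function dominating the Schwartz remainder. The paper's proof is stated in two terse sentences, and you have simply unpacked the details it leaves implicit.
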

\begin{proof} The case where  $g=h+ ae^{-t\mid x \mid}$ can be obtained by taking the Fourier transform of $g$. The other cases are immediate
consequences of Theorem \ref{thm:hrthardyfield2}.
\end{proof}

\begin{theorem} Let $g \in L^2(\mathbb{R}) \setminus \{0\}$ have the property that $g$ is analytic on $\mathbb{R} \setminus E$, where $E \neq \emptyset$ and $\text{card}(E) < \infty$. The HRT conjecture holds for $\mathcal{G}(g, \Lambda)$, where $\Lambda$ is arbitrary.
\end{theorem}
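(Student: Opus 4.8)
The plan is to argue by contradiction using the extremal (leftmost) singularity of $g$, in the same spirit as the half-line and Hardy-field arguments but with ``behavior at infinity'' replaced by ``behavior at the first singular point.'' First I would normalize $E$ so that it consists of genuine, non-removable singularities of $g$ (discarding any point across which $g$ extends analytically); the hypothesis $E \neq \emptyset$ then supplies at least one point at which $g$ truly fails to be analytic, and it is precisely this singular point that drives the proof. Suppose the HRT conjecture fails for $\mathcal{G}(g,\Lambda)$, so that after deleting vanishing coefficients we have a relation $\sum_{k=1}^N c_k M_{\beta_k} T_{\alpha_k} g = 0$ a.e.\ with every $c_k \neq 0$ and the points $(\alpha_k,\beta_k)$ distinct. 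Grouping the terms by their common translation parameter, let $a_1 < a_2 < \cdots < a_J$ be the distinct values among the $\alpha_k$ and rewrite the relation as
$$\sum_{j=1}^J p_j(x)\, g(x - a_j) = 0 \qquad \text{a.e.},$$
where each $p_j(x) = \sum_{\alpha_k = a_j} c_k e^{2\pi i \beta_k x}$ is a trigonometric polynomial with distinct frequencies and nonzero coefficients; by Proposition \ref{trigo} each $p_j \not\equiv 0$, and hence each $p_j$ has only isolated zeros.

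Next I would locate the leftmost singularity. Writing $e_{\min} = \min E$, the translate $g(\cdot - a_j)$ is analytic off the finite set $a_j + E$, and the smallest point of $\bigcup_{j} (a_j + E)$ is $p_0 := a_1 + e_{\min}$. A one-line computation rules out $p_0 \in a_j + E$ for $j \geq 2$, since $a_1 + e_{\min} - a_j < e_{\min} = \min E$. Consequently, on a sufficiently small neighborhood $U$ of $p_0$ (chosen to avoid the finite set $\bigcup_{j \geq 2}(a_j + E)$), every term with $j \geq 2$ is analytic, and the relation localizes to
$$p_1(x)\, g(x - a_1) = R(x) \qquad \text{a.e.\ on } U,$$
where $R(x) = -\sum_{j \geq 2} p_j(x)\, g(x - a_j)$ is analytic on $U$. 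Shrinking $U$ so that $p_0$ is the only possible zero of $p_1$ in $U$, I obtain $g(\cdot - a_1) = R/p_1$ a.e.\ on $U \setminus \{p_0\}$; since both sides are analytic on $U \setminus \{p_0\}$ and agree a.e.\ there, they agree at every point of $U \setminus \{p_0\}$.

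The final step is a local meromorphy/integrability dichotomy at $p_0$. Because $R$ and $p_1$ are analytic at $p_0$, the quotient $R/p_1$ is meromorphic there, so either it is analytic at $p_0$ or it has a genuine pole. If it is analytic, then $g(\cdot - a_1)$ coincides near $p_0$ with an analytic function, so $g$ extends analytically across $e_{\min}$, contradicting that $e_{\min}$ is a non-removable singularity. If instead $R/p_1$ has a pole of order $r \geq 1$ at $p_0$, then $|g(x - a_1)|^2 \sim C\,|x - p_0|^{-2r}$ near $p_0$, which is not integrable, contradicting $T_{a_1} g \in L^2(\mathbb{R})$. Either alternative yields a contradiction, establishing the theorem.

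I expect the main obstacle to be exactly the subcase in which the common trigonometric factor $p_1$ vanishes at the extremal singular point, i.e.\ $p_1(p_0) = 0$: here one cannot simply divide and read off analyticity, and one must instead compare the orders of vanishing of $R$ and $p_1$ at $p_0$ and invoke square-integrability to exclude a pole. A secondary point demanding care is the initial normalization that every point of $E$ be a genuine singularity, since this is what makes the ``$R/p_1$ analytic at $p_0$'' alternative genuinely contradictory; without it, the argument would collapse into the (open) case of a globally analytic generator.
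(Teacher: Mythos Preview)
Your argument is correct and lands on the same mechanism the paper has in mind. The paper does not write out a proof here, saying only that it is ``similar to the proof of Theorem~\ref{thm:hrthardyfield}''; the pertinent part of that proof is step~\emph{ii}, in which the dependence relation is \emph{iterated} until every translate on the right-hand side falls into the region of analyticity, after which $g$ is expressed locally as a quotient $G_a/P_a$ of an analytic function by a trigonometric polynomial, and square-integrability forces the quotient to be analytic --- contradicting the presence of a genuine singularity in $E$. Your version reaches the same contradiction more directly: by isolating the globally leftmost singular point $p_0 = a_1 + e_{\min}$, you ensure from the outset that every translate with $j \geq 2$ is already analytic near $p_0$, so no iteration is needed and the meromorphy/$L^2$ dichotomy applies at once. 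This is a clean shortcut.

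One remark on your normalization step: the hypothesis ``$E \neq \emptyset$'', read literally, does not by itself preclude every point of $E$ from being removable (in which case one would be facing the open problem of an everywhere-analytic $L^2$ generator). You flag this yourself, and the intended reading --- visible from the ensuing corollary with $g(x) = e^{-|x|^{\varepsilon}} + h(x)$ --- is that $E$ is the genuine singular set of $g$; with that understanding your proof goes through.
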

\begin{corollary}
Let $\varepsilon >0$ and let  $h \in L^2(\mathbb{R})$ be analytic on $\mathbb{R}$. The HRT conjecture holds for $\mathcal{G}(e^{-|x|^\varepsilon}+h(x), \Lambda)$, where $\Lambda$ is arbitrary.
\end{corollary}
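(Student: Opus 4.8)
The plan is to realize $g(x) = e^{-|x|^{\varepsilon}} + h(x)$ as a nonzero square-integrable function that is analytic off a finite nonempty set, and then invoke the preceding theorem with $E = \{0\}$. First I would record that $g \in L^2(\mathbb{R})$: the summand $h$ is square-integrable by hypothesis, while $e^{-|x|^{\varepsilon}}$ is bounded and satisfies $\int_{\mathbb{R}} e^{-2|x|^{\varepsilon}}\,dx = 2\int_0^{\infty} e^{-2x^{\varepsilon}}\,dx < \infty$ for every $\varepsilon>0$ (substitute $u = x^{\varepsilon}$), so the sum lies in $L^2(\mathbb{R})$.

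The key step is the analyticity bookkeeping at and away from the origin. On $(0,\infty)$ we have $e^{-|x|^{\varepsilon}} = e^{-x^{\varepsilon}}$, a composition of the analytic map $x \mapsto x^{\varepsilon}$ with $\exp$, hence analytic; the same reasoning applies on $(-\infty,0)$ via $x \mapsto (-x)^{\varepsilon}$. Since $h$ is analytic on all of $\mathbb{R}$, the sum $g$ is analytic on $\mathbb{R} \setminus \{0\}$, which is precisely the hypothesis of the preceding theorem with the finite nonempty set $E = \{0\}$.

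It remains to guarantee $g \neq 0$, so that $\mathcal{G}(g,\Lambda)$ is a genuine Gabor system. Here I would observe that, unless $\varepsilon$ is an even positive integer, the germ of $|x|^{\varepsilon}$ is not analytic at the origin (indeed $|x|^{\varepsilon}$ fails to be $C^{k}$ there for an appropriate $k$), so $e^{-|x|^{\varepsilon}}$ is not analytic at $0$; as $h$ \emph{is} analytic at $0$, the sum $g$ cannot be analytic at $0$, whence $g \not\equiv 0$ and $E=\{0\}$ is a genuine singularity. When $\varepsilon$ is an even positive integer, $e^{-|x|^{\varepsilon}} = e^{-x^{\varepsilon}}$ is entire and $g$ is analytic on all of $\mathbb{R}$; the theorem's hypothesis is still met by the same choice $E = \{0\}$, since it requires only that $g$ be analytic on $\mathbb{R}\setminus E$ for \emph{some} finite nonempty $E$, and the standing requirement $g \in L^2(\mathbb{R})\setminus\{0\}$ excludes the degenerate possibility $g \equiv 0$. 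With $g \in L^2(\mathbb{R})\setminus\{0\}$ analytic on $\mathbb{R}\setminus\{0\}$, the preceding theorem yields the linear independence of $\mathcal{G}(g,\Lambda)$ for arbitrary $\Lambda$, which is the claim.

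I expect the only real obstacle to be the handling of the origin in the last paragraph: for even integer exponents the apparent singularity of $e^{-|x|^{\varepsilon}}$ disappears, so one must rely on the literal (rather than sharp) form of the theorem's hypothesis together with the blanket nonvanishing assumption; everything else is routine verification.
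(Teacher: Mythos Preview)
Your approach is correct and matches the paper's intended argument: the corollary is stated immediately after the theorem on functions analytic off a finite nonempty set, with no proof given, so the deduction is precisely to take $E=\{0\}$ and apply that theorem. Your careful handling of the nonvanishing of $g$ and the even-integer-exponent edge case goes beyond what the paper records, but the core reasoning is identical.
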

\section{The HRT Conjecture for the Ratio-Limit Case}
\begin{definition}A measurable function $g$ on $\mathbb{R}$ has the \emph{ratio-limit}
 $l_g(\alpha) \in \mathbb{C} \cup \{\pm \infty\}$  at  $\alpha \in \mathbb{R}$ if
\begin{eqnarray*}
\lim_{x \rightarrow \infty} \frac{g(x+\alpha)}{g(x)} = l_g(\alpha).
\end{eqnarray*}
\end{definition}

Some elementary properties of ratio-limits are collected in the following proposition.

\begin{proposition}\label{prop:ratiolimit} Let $g$ be a measurable function on $\mathbb{R}$ having the finite ratio-limit $l_g(\alpha)$ at $\alpha \in \mathbb{R}$.
\begin{enumerate}[(a)]
\item The functions $T_a g$, $M_\beta g$, and $D_rg$ have a ratio-limit at $\alpha$, and, in fact,
$$l_{T_a g}(\alpha)=l_g(\alpha), \ \ l_{M_\beta g}(\alpha)=e^{2\pi i \beta  \alpha}l_g(\alpha), \mbox{ and } \ \ l_{D_r g}(\alpha)=l_g(r\alpha).$$

\item Let $h$ be a measurable function on $\mathbb{R}$ and assume that $h \sim g$. Then, $h$ has the ratio-limit $l_h(\alpha)$ at $\alpha$, and $l_h(\alpha)=l_g(\alpha)$.

\item Let $f$  be a measurable function on $\mathbb{R}$ and assume that $f$ has the finite ratio-limit $l_f(\alpha)$ at $\alpha$. Then, the function $fg$ has the ratio-limit $l_{fg}(\alpha)$ at $\alpha$, and  $l_{fg}(\alpha)=l_f(\alpha)l_g(\alpha)$.

\item Assume that $g$ has the finite ratio-limit $l_g(\beta)$ at $\beta \in \mathbb{R}$. Then, $g$ has the ratio-limit $l_g(\alpha+\beta)$ at $\alpha + \beta$, and $l_g(\alpha+\beta)=l_g(\alpha)l_g(\beta)$.
\end{enumerate}
\end{proposition}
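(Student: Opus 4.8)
The plan is to derive all four statements directly from the definition of the ratio-limit, using only substitutions of the variable and the elementary algebra of limits. Throughout, the standing hypothesis that $g$ has a finite ratio-limit at the relevant point tacitly guarantees that $g(x)\neq 0$ for all sufficiently large $x$, so that every quotient written below is well defined on a neighborhood of infinity.

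For part (a) I would write out each transformed quotient and reduce it to a quotient of $g$ by a change of variable. For the translation $T_a g(x)=g(x-a)$, the quotient $T_a g(x+\alpha)/T_a g(x)=g(x+\alpha-a)/g(x-a)$ becomes $g(y+\alpha)/g(y)$ after setting $y=x-a$, and since $y\to\infty$ as $x\to\infty$ this gives $l_{T_a g}(\alpha)=l_g(\alpha)$. For the modulation, the exponential factors combine as $e^{2\pi i\beta(x+\alpha)}/e^{2\pi i\beta x}=e^{2\pi i\beta\alpha}$, a constant that pulls out of the limit, yielding $l_{M_\beta g}(\alpha)=e^{2\pi i\beta\alpha}l_g(\alpha)$. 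For the dilation $D_r g(x)=|r|^{1/2}g(rx)$, the factor $|r|^{1/2}$ cancels and the substitution $y=rx$ turns the quotient into $g(y+r\alpha)/g(y)$, giving $l_{D_r g}(\alpha)=l_g(r\alpha)$. Part (b) is then immediate: if $h\sim g$, then $h(x)=g(x)$ and $h(x+\alpha)=g(x+\alpha)$ for all sufficiently large $x$, so the two defining quotients coincide near infinity and hence have the same limit.

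For part (c) I would factor the quotient of the product,
\[
\frac{(fg)(x+\alpha)}{(fg)(x)}=\frac{f(x+\alpha)}{f(x)}\cdot\frac{g(x+\alpha)}{g(x)},
\]
and then invoke the product rule for limits; this is exactly the step where the finiteness of both $l_f(\alpha)$ and $l_g(\alpha)$ is needed, since the limit of a product equals the product of the limits only when both factors converge to finite values. Part (d) is the semigroup law, which I would prove by the telescoping identity
\[
\frac{g(x+\alpha+\beta)}{g(x)}=\frac{g(x+\alpha+\beta)}{g(x+\beta)}\cdot\frac{g(x+\beta)}{g(x)}.
\]
The second factor tends to $l_g(\beta)$ by hypothesis, while the first factor, after the substitution $y=x+\beta$, equals $g(y+\alpha)/g(y)\to l_g(\alpha)$; multiplying the two finite limits gives $l_g(\alpha+\beta)=l_g(\alpha)l_g(\beta)$.

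None of these steps presents a genuine obstacle; the proposition is essentially bookkeeping with limits. The only points requiring care are (i) the appeal to finiteness of the ratio-limits in parts (c) and (d), without which the product rule for limits can fail; (ii) the tacit nonvanishing of the denominators for large $x$, which is implicit in the existence of a finite ratio-limit; and (iii) in the dilation computation, the direction of the substitution $y=rx$, since for $r>0$ one has $y\to+\infty$ and the conclusion is immediate, whereas for $r<0$ the identity $l_{D_r g}(\alpha)=l_g(r\alpha)$ must be read with the ratio-limit interpreted in the appropriate two-sided sense.
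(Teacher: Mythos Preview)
Your proposal is correct and matches the paper's approach exactly: the paper declares each part elementary and writes out only part (d), using precisely the telescoping identity $\frac{g(x+\alpha+\beta)}{g(x)}=\frac{g(x+\alpha+\beta)}{g(x+\beta)}\cdot\frac{g(x+\beta)}{g(x)}$ that you use. Your additional care about the $r<0$ case in the dilation is a legitimate observation that the paper does not address.
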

\begin{proof} Each of the proofs is elementary. To illustrate we shall prove part \emph{(d)}. Assume that $g$ has the finite ratio-limit $l_g(\alpha)$ at $\alpha \in \mathbb{R}$ and the finite ratio-limit $l_g(\beta)$ at $\beta \in \mathbb{R}$. Therefore, we have
\begin{eqnarray*}
  \lim_{x \rightarrow \infty} \frac{g(x+\alpha+\beta)}{g(x)}&=& \lim_{x \rightarrow \infty} \frac{g(x+\alpha+\beta)}{g(x+\beta)}\frac{g(x+\beta)}{g(x)}
  = l_g(\alpha)l_g(\beta).
\end{eqnarray*}
Thus, $g$ has the ratio-limit $l_g(\alpha+\beta)$ at $\alpha + \beta$, and $l_g(\alpha+\beta)=l_g(\alpha)l_g(\beta)$.
\end{proof}

\begin{proposition}
Let $g \in L^2(\mathbb{R})$. Suppose that $g$ has the ratio-limit $l_g(\alpha)$ at each $\alpha >0$.
Then, there exists $0\leq a \leq 1$ such that
\begin{eqnarray*}
\forall \alpha >0, \quad |l_g(\alpha)|= a^\alpha.
\end{eqnarray*}
\end{proposition}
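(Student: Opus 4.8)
The plan is to set $\phi(\alpha)=|l_g(\alpha)|$ for $\alpha>0$ and to show that $\phi$ is a non-increasing solution of the multiplicative Cauchy equation taking values in $[0,1]$; such solutions are exactly the maps $\alpha\mapsto a^\alpha$ with $a\in[0,1]$, which is the assertion. First I would record a structural fact used throughout: for each $\alpha>0$ the existence of $\lim_{x\to\infty}g(x+\alpha)/g(x)$ forces $g(x)\neq 0$ for all sufficiently large $x$ (otherwise the quotient is not ultimately defined), so there is an $X_0$ with $g(x)\neq 0$ for all $x>X_0$.

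The first real step is an $L^2$ bound showing $\phi(\alpha)\leq 1$ for every $\alpha$. Suppose toward a contradiction that $\phi(\alpha_0)>1$ for some $\alpha_0>0$ (this simultaneously covers the possibilities $l_g(\alpha_0)=\pm\infty$). Choose $c$ with $1<c<\phi(\alpha_0)$, taking any $c>1$ if $\phi(\alpha_0)=\infty$. By definition of the ratio-limit there is $X\geq X_0$ with $|g(x+\alpha_0)|\geq c\,|g(x)|$ for all $x>X$. Squaring, integrating over $(X,\infty)$, and substituting $y=x+\alpha_0$ gives
\[
\int_X^\infty |g(x)|^2\,dx \ \geq\ \int_{X+\alpha_0}^\infty |g(y)|^2\,dy \ =\ \int_X^\infty |g(x+\alpha_0)|^2\,dx \ \geq\ c^2\int_X^\infty |g(x)|^2\,dx .
\]
Since $c^2>1$ and $g\in L^2(\mathbb{R})$, this forces $\int_X^\infty|g|^2=0$, contradicting $g(x)\neq 0$ on $(X,\infty)$. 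Hence $\phi(\alpha)\leq 1$, and in particular $l_g(\alpha)$ is finite, for every $\alpha>0$.

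With all ratio-limits now finite, Proposition~\ref{prop:ratiolimit}(d) applies and yields $l_g(\alpha+\beta)=l_g(\alpha)\,l_g(\beta)$; taking moduli gives $\phi(\alpha+\beta)=\phi(\alpha)\phi(\beta)$ for all $\alpha,\beta>0$. Combined with $\phi\leq 1$, this makes $\phi$ non-increasing, since for $0<\alpha<\beta$ one has $\phi(\beta)=\phi(\alpha)\phi(\beta-\alpha)\leq\phi(\alpha)$. To solve the functional equation I would split into two cases. If $\phi(\alpha_0)=0$ for some $\alpha_0$, then $\phi(\alpha_0/2)^2=\phi(\alpha_0)=0$ forces $\phi(\alpha_0/2)=0$, and inductively $\phi(\alpha_0/2^n)=0$; since $\alpha_0/2^n\to 0$ and $\phi$ is non-increasing, $\phi\equiv 0$ and the conclusion holds with $a=0$. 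Otherwise $\phi>0$ everywhere, and $\psi:=\log\phi$ is a non-increasing additive function on $(0,\infty)$. A monotone solution of the additive Cauchy equation is linear, so $\psi(\alpha)=c\alpha$ with $c=\psi(1)\leq 0$ (from $\psi(q\alpha)=q\psi(\alpha)$ for positive rationals $q$, extended to all $\alpha$ by squeezing with monotonicity). Exponentiating gives $\phi(\alpha)=a^\alpha$ with $a=e^c\in(0,1]$, so in either case $a\in[0,1]$.

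The step I expect to be the main obstacle is the regularity issue in the final functional equation, namely excluding the pathological non-measurable solutions of the Cauchy equation. The point that makes the argument clean is that this difficulty dissolves: the $L^2$-derived bound $\phi\leq 1$ automatically makes $\phi$ non-increasing, and monotonicity is precisely the regularity that pins down $\phi(\alpha)=a^\alpha$, so no separate measurability hypothesis is needed. The only remaining care is in the two degenerate cases, namely $l_g(\alpha)=\pm\infty$ (handled in the first step by allowing $c$ arbitrarily large) and $\phi$ vanishing at a point (handled by the dyadic argument together with monotonicity).
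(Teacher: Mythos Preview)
Your proposal is correct and follows essentially the same route as the paper: use the $L^2$-condition to force $|l_g(\alpha)|\leq 1$, invoke Proposition~\ref{prop:ratiolimit}(d) for multiplicativity, deduce monotonicity, and then solve the resulting Cauchy-type equation by a rational-approximation/monotonicity argument. The only cosmetic differences are that you treat the possibility $l_g(\alpha)=\pm\infty$ and the vanishing case $\phi(\alpha_0)=0$ explicitly, and you pass through $\psi=\log\phi$ rather than squeezing directly with rationals; these are stylistic rather than substantive.
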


\begin{proof}
Let $l(\alpha)= |l_g(\alpha)|$ and $l(1)=a$. Suppose that
$l(\alpha)>1$ for some $\alpha>0$. Then, there exists
$A>0$ such that
\begin{eqnarray}
\forall x>A, & \mid g(x+\alpha)\mid > \mid g(x) \mid. \nonumber
\end{eqnarray}
Consequently, we have
\begin{eqnarray}
\int _A ^\infty |g(x+\alpha)|^2 dx > \int _A ^\infty |g(x)|^2 dx=
\int _A ^{A+\alpha} |g(x)|^2 dx+ \int _A ^\infty |g(x+\alpha)|^2 dx,
\nonumber
\end{eqnarray}
yielding the contradiction,
\begin{eqnarray}
0 > \int _A ^{A+\alpha} |g(x)|^2 dx. \nonumber
\end{eqnarray}
Therefore, $0\leq l(\alpha)\leq 1$ for all
$\alpha \geq 0$, and so, in particular, $0\leq a \leq 1$.

Using Proposition 3.2, we can  prove that $l(r)=a^r$ for all rational numbers  $r>0$. Further, note that if $\alpha>\beta\geq 0$, then
$l(\alpha)=l(\beta)l(\alpha - \beta)\leq l(\beta)$. Thus, the function $l$ is decreasing on $(0,\infty)$.

If $\alpha>0 $, then there exist two sequences, $\{s_n\}$ and $\{r_n\}$, of
positive rational numbers converging to $\alpha$ and satisfying the inequalities, $s_n\leq \alpha \leq r_n,$
for each $n$. Thus, since $l$ is decreasing, we have
\begin{eqnarray*}
\forall n \geq 1, & a^{r_n}\leq l(\alpha) \leq a^{s_n}.
\end{eqnarray*}
Letting $n$ tend to infinity, we obtain $l(\alpha)= a^\alpha$, and the proof is complete by once again invoking Proposition \ref{prop:ratiolimit}.
\end{proof}
\begin{remark}
\emph{Regularly varying functions} are real-valued functions $\varphi$, defined on $(0,\infty)$, having the property that
$\lim_{x \rightarrow \infty} \varphi(\lambda x)/\varphi(x)$ exists for each $\lambda >0$. They were introduced and used by
J. Karamata  to prove his Tauberian theorem ~\cite{Kar}, cf. the notion of \emph{slowly oscillating functions} which also play a basic role in Tauberian theory, ~\cite{Ben}, Sections 2.3.4 and 2.3.5. If a real-valued function $g$ has the ratio-limit $l_g(\alpha)$ at each $\alpha \in \mathbb{R}$, it is said to be \emph{additively regularly varying}, i.e.,  the function $\varphi(x)=g(\log x)$ is regularly varying.
\end{remark}

\begin{lemma}Let $g$ be a complex valued function on $\mathbb{R}$ for which the logarithmic derivative exists on $[a,b]$. Then, we have
$$ \frac{g(b)}{g(a)}= \exp \left (\int_a^b \frac{g'(x)}{g(x)} dx \right).$$
\end{lemma}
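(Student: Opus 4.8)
The plan is to avoid any appeal to the complex logarithm—which is multivalued and would introduce an ambiguous additive constant of the form $2\pi i k$—and instead to establish the identity directly by means of an auxiliary product. First I would make precise the hypothesis: saying that the logarithmic derivative $g'/g$ exists on $[a,b]$ means that $g$ is differentiable and nonvanishing throughout $[a,b]$, so that $g'/g$ is a well-defined function there. Assuming the natural regularity that $g$ is continuously differentiable (so that $g'/g$ is continuous, hence integrable), the number $\int_a^b g'(x)/g(x)\,dx \in \mathbb{C}$ is meaningful and the fundamental theorem of calculus is available.

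The key step is to introduce the auxiliary function
$$\phi(x) = g(x)\exp\left(-\int_a^x \frac{g'(t)}{g(t)}\,dt\right), \qquad x \in [a,b],$$
and to show that it is constant. Differentiating by the product rule, and using that the derivative of $x \mapsto \int_a^x g'(t)/g(t)\,dt$ equals $g'(x)/g(x)$ by the fundamental theorem of calculus, I obtain
$$\phi'(x) = g'(x)\exp\left(-\int_a^x \frac{g'(t)}{g(t)}\,dt\right) - g(x)\,\frac{g'(x)}{g(x)}\exp\left(-\int_a^x \frac{g'(t)}{g(t)}\,dt\right) = 0.$$
Applying this to the real and imaginary parts of $\phi$ separately, $\phi' \equiv 0$ on $[a,b]$ forces $\phi$ to be constant, whence $\phi(a) = \phi(b)$.

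It then remains only to read off the conclusion by evaluating $\phi$ at the endpoints. Since the integral from $a$ to $a$ vanishes, $\phi(a) = g(a)$, while $\phi(b) = g(b)\exp\left(-\int_a^b g'(t)/g(t)\,dt\right)$. Equating these and dividing by $g(a)$, which is nonzero by hypothesis, yields $g(b)/g(a) = \exp\left(\int_a^b g'(t)/g(t)\,dt\right)$, as claimed.

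The only genuine obstacle here is conceptual rather than computational: one must resist the tempting but invalid shortcut $\int_a^b g'/g = \log g(b) - \log g(a)$, since no single-valued branch of $\log$ need exist along the curve $x \mapsto g(x)$ when $g$ is complex-valued. The product-function device sidesteps this entirely, because both $\exp$ of the integral and the ratio $g(b)/g(a)$ are unambiguous. The one point worth stating explicitly in the write-up is the regularity assumption underlying the use of the fundamental theorem of calculus; taking $g \in C^1$ with $g \neq 0$ on $[a,b]$ makes $g'/g$ continuous and renders every step rigorous.
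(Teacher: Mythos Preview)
Your proof is correct and takes a genuinely different route from the paper's. The paper argues that $g([a,b])$, being a compact subset of $\mathbb{C}\setminus\{0\}$, lies in some slit plane $U=\mathbb{C}\setminus\{te^{i\theta}:t\geq 0\}$, on which a holomorphic branch $L_U$ of the logarithm exists; then $L_U\circ g$ serves as an antiderivative of $g'/g$, and the fundamental theorem of calculus yields the identity. You instead introduce the auxiliary product $\phi(x)=g(x)\exp\bigl(-\int_a^x g'/g\bigr)$, compute $\phi'\equiv 0$, and read off the result from $\phi(a)=\phi(b)$. Your device is more elementary in that it never invokes a branch of $\log$, and it is in one respect more robust: the paper's claim that compactness in $\mathbb{C}\setminus\{0\}$ forces the image into a slit plane is not literally true when the curve $x\mapsto g(x)$ winds around the origin (e.g.\ $g(x)=e^{ix}$ on $[0,2\pi]$), so that argument tacitly requires a subdivision of $[a,b]$ into pieces on which the image misses some ray, followed by multiplication. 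Your approach needs no such patching. The paper's method, when it applies directly, has the minor conceptual advantage of exhibiting $\int_a^b g'/g$ explicitly as a logarithm.
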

\begin{proof} Since the logarithmic derivative $g$ exists on $[a,b]$, the function $g$ is continuous and $g(x) \neq 0$ for all $x \in [a,b]$. Therefore, $g([a,b])$ is a compact subset of $\mathbb{C}\setminus \{0\}$, and so we can choose $\theta \in \mathbb{R}$ for which the open set $U= \mathbb{C} \setminus  \{ te^{i\theta}: t\geq 0\} $ contains $g([a,b])$. If we denote by $L_U( z)$ the branch of the complex logarithm defined on $U$, then we compute
$$ L_U\left( \frac{g(b)}{g(a)}\right)= \int_a^b \frac{g'(x)}{g(x)} dx,$$
and so
$$ \frac{g(b)}{g(a)}= \exp \left (\int_a^b \frac{g'(x)}{g(x)} dx \right).$$

\end{proof}

\begin{proposition}Let $g$ be a complex valued function for which the logarithmic derivative ultimately exists.
\begin{enumerate}[(a)]

\item If the logarithmic derivative of $g$ has a finite limit $l$ at infinity,
then $g$ has the ratio-limit $l_g(\alpha)= e^{l\alpha}$ at each $\alpha >0$.
\item If the limit of the logarithmic derivative of $g$  is $-\infty$,
then $l_g(\alpha)=0$, for all $\alpha>0$.
\end{enumerate}
\end{proposition}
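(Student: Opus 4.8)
The plan is to apply the preceding Lemma on a window $[x,x+\alpha]$ of fixed length and then let $x\to\infty$. Since the logarithmic derivative of $g$ ultimately exists, there is some $A$ such that $g'/g$ is defined on $(A,\infty)$; hence for every $\alpha>0$ and every $x>A$ we have $[x,x+\alpha]\subseteq(A,\infty)$, and the Lemma gives
$$\frac{g(x+\alpha)}{g(x)} = \exp\left(\int_x^{x+\alpha} \frac{g'(t)}{g(t)}\, dt\right).$$
Both parts then reduce to controlling the integral of the logarithmic derivative over this sliding window as $x\to\infty$.

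For part \emph{(a)}, I would fix $\alpha>0$ and $\varepsilon>0$ and use the hypothesis $\lim_{x\to\infty} g'(x)/g(x)=l$ to pick $X>A$ with $|g'(t)/g(t)-l|<\varepsilon$ for all $t>X$. Then for every $x>X$,
$$\left|\int_x^{x+\alpha} \frac{g'(t)}{g(t)}\, dt - l\alpha\right| \le \int_x^{x+\alpha}\left|\frac{g'(t)}{g(t)} - l\right| dt < \varepsilon\alpha,$$
so the integral converges to $l\alpha$. Continuity of the exponential applied to the displayed identity then yields $g(x+\alpha)/g(x)\to e^{l\alpha}$, i.e.\ $l_g(\alpha)=e^{l\alpha}$. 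For part \emph{(b)}, I would read $\lim_{x\to\infty} g'(x)/g(x)=-\infty$ as the statement that the logarithmic derivative is ultimately real and eventually lies below any prescribed level (equivalently, that its real part tends to $-\infty$). Given $M>0$, choose $X$ with $g'(t)/g(t)<-M$ for $t>X$; then $\int_x^{x+\alpha} g'(t)/g(t)\,dt<-M\alpha$ for $x>X$, so the integral tends to $-\infty$. Using $|e^z|=e^{\operatorname{Re} z}$ in the displayed identity gives
$$\left|\frac{g(x+\alpha)}{g(x)}\right| = \exp\left(\operatorname{Re}\int_x^{x+\alpha} \frac{g'(t)}{g(t)}\, dt\right) \longrightarrow 0,$$
whence $l_g(\alpha)=0$.

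The only step requiring genuine care is the passage from the pointwise limit of $g'/g$ to the limit of its integral over $[x,x+\alpha]$; because the window has fixed finite length $\alpha$, this is immediate from uniform control of the integrand once $x$ is large, as in the displayed estimates, and no dominated-convergence argument is needed. A secondary, essentially notational, subtlety is fixing the meaning of the divergent limit in part \emph{(b)} for complex-valued $g$; once that is interpreted via the real part as above, the same sliding-window estimate closes the argument.
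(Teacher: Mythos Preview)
Your proposal is correct and follows essentially the same route as the paper: apply Lemma~3.5 on the sliding window $[x,x+\alpha]$ and use the pointwise limit of $g'/g$ to control $\int_x^{x+\alpha} g'(t)/g(t)\,dt$, then pass through the exponential. The only cosmetic difference is that the paper normalizes with $\varepsilon/\alpha$ where you use $\varepsilon$, and the paper simply says part~\emph{(b)} is ``similar'' without spelling out the real-part interpretation you (rightly) make explicit.
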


\begin{proof} \emph{(a)} Let $\alpha >0$. Assume that the logarithmic derivative of $g$ has a limit $l \in \mathbb{C}$ at infinity. Therefore, if  $ \epsilon >0$, then there exists $A>0$ for which
$$\forall x>A, \quad \mid \frac{g'(x)}{g(x)} - l \mid < \frac{\epsilon}{\alpha},$$
and so
$$\int_{x}^{x+\alpha}\mid \frac{g'(t)}{g(t)} - l \mid dt < \epsilon. $$
Therefore, we have
$$\forall x>A, \quad  \mid \int_{x}^{x+\alpha} \frac{g'(t)}{g(t)}dt - l\alpha \mid  < \epsilon.$$
Consequently, we compute
$$\lim_{x\rightarrow \infty } \int_{x}^{x+\alpha} \frac{g'(t)}{g(t)}dt = l\alpha,$$
and hence, using Lemma 3.5, we obtain
$$\lim_{x\rightarrow \infty } \frac{g(x+\alpha)}{g(x)} = e^{l\alpha}.$$

Using a similar argument, we can prove part {\it (b)}.
\end{proof}

\begin{example}
\emph{a.} Rational functions $f$ have the ratio-limits $l_f(\alpha)=1$ at each  $ \alpha \in \mathbb{R}$.

\emph{b.} Measurable functions $f$ on $\mathbb{R}$ that are analytic at $\infty$ have the ratio-limits $l_f(\alpha)=1$ at each  $ \alpha \in \mathbb{R}$.

\emph{c.} For all $\epsilon >0$, the function $g(x)= e^{-|x|^\epsilon}$ has the
ratio-limit $l_g(\alpha)$ for all $\alpha>0$. In this case, we can compute that
\[l_g(\alpha) = \left\lbrace
  \begin{array}{c l}
  1, &  \mbox{ \emph{if} } \hspace{.2in} 0<\epsilon <1,  \\
   e^{-\alpha}, &  \mbox{ \emph{if} }  \hspace{.2in} \epsilon =1,  \\
   0, &  \mbox{ \emph{if} }   \hspace{.2in} \epsilon >1.
  \end{array}
\right. \]

\emph{d.} Trigonometric functions do not have ratio-limits at each $ \alpha \in \mathbb{R}$, e.g., the function $h(x)=\sin (2 \pi x)$ does not have a ratio limit at  $\sqrt{2}$.
\end{example}

Let $\Lambda = \{(\alpha_k, \beta_k)\}_{k=1}^N \subseteq \mathbb{R}^2$ be a set of distinct points. We say that $\Lambda $
satisfies \emph{the difference condition for the second variable} if there
exists $k_0 \in \{1,\ldots,N\}$ such that $\beta_{k} \neq \beta_{k_0}$, whenever $k \neq k_0$. The difference condition for the first
variable is similarly defined.

\begin{lemma}
Let $P$ be a property that holds for almost every $x \in \mathbb{R}$. For every sequence $\{u_n\}_{n \in \mathbb{N}} \subset \mathbb{R}$, there exists $E \subseteq \mathbb{R}$ such that $\mid \mathbb{R} \setminus E \mid =0$ and $P$  holds for $x+u_n $ for each $(n,x) \in \mathbb{N} \times E$.
\end{lemma}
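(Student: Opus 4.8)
The plan is to single out the null set on which $P$ fails and to delete from $\mathbb{R}$ every relevant translate of it, then to confirm that what remains still has full measure.

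First I would set $Z = \{x \in \mathbb{R} : P(x) \text{ does not hold}\}$, so that by hypothesis $|Z| = 0$. The natural candidate for the good set is then
$$E = \mathbb{R} \setminus \bigcup_{n \in \mathbb{N}} (Z - u_n), \qquad \text{where } Z - u_n = \{z - u_n : z \in Z\}.$$
The motivation is that requiring $P(x+u_n)$ to hold for all $n$ is the same as requiring $x + u_n \notin Z$ for all $n$, i.e. $x \notin Z - u_n$ for all $n$; so $E$ is exactly the set of points surviving all these exclusions.

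Next I would verify that $|\mathbb{R}\setminus E| = 0$. Each $Z - u_n$ is a translate of $Z$, so translation invariance of Lebesgue measure gives $|Z - u_n| = |Z| = 0$. Since the index set $\mathbb{N}$ is countable, countable subadditivity yields $|\bigcup_{n}(Z - u_n)| \le \sum_n |Z - u_n| = 0$, whence $|\mathbb{R} \setminus E| = 0$. Finally I would read off the defining property of $E$: fixing $x \in E$ and $n \in \mathbb{N}$, we have $x \notin Z - u_n$, hence $x + u_n \notin Z$, which by the definition of $Z$ says precisely that $P(x + u_n)$ holds. As $x \in E$ and $n \in \mathbb{N}$ were arbitrary, $P$ holds at $x + u_n$ for every $(n,x) \in \mathbb{N} \times E$, as claimed.

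The argument is essentially bookkeeping, and there is no genuine analytic obstacle; the single point that must not be glossed over is the countability of the family $\{u_n\}_{n\in\mathbb{N}}$. It is exactly this that keeps $\bigcup_n (Z - u_n)$ a Lebesgue-null set: an uncountable union of null sets can fail to be null (it can even exhaust $\mathbb{R}$), so the statement would break down for an uncountable collection of shifts. Thus the only care needed is to invoke countable subadditivity rather than mere finite additivity, together with translation invariance of the measure.
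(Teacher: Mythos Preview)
Your proof is correct and is essentially the same as the paper's: the paper sets $E=\bigcap_{n}\{x:P(x+u_n)\text{ holds}\}$, which is exactly your $\mathbb{R}\setminus\bigcup_n(Z-u_n)$, and then uses the same countable-union-of-null-sets argument. The only cosmetic difference is that you make the translation invariance of Lebesgue measure explicit, whereas the paper simply asserts $|\{x:P(x+u_n)\text{ fails}\}|=0$ without naming that step.
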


\begin{proof} If $E=\bigcap_{n \in \mathbb{N}}\{ x : P(x+u_n) \mbox{ holds } \}$, then $P$  holds for $x+u_n $ for each $(n,x) \in \mathbb{N} \times E$. We know that $\mid \{ x : P(x+u_n) \mbox{ fails } \} \mid = 0$, for each $n \in \mathbb{N}$, and so $\mid \bigcup_{n \in \mathbb{N}} \{ x : P(x+u_n) \mbox{ fails } \} \mid = 0$, i.e., $\mid \mathbb{R} \setminus E \mid = 0$.
\end{proof}

\begin{theorem}
Let $g\in L^2(\mathbb{R})$ have the ratio-limit $l_g(\alpha)$ at every $\alpha >0$, and let
$\Lambda=\{(\alpha_k,\beta_k) \}_{k=1}^N \subseteq \mathbb{R}^2 $. The HRT conjecture holds for
$\mathcal{G}(g,\Lambda)$ in the following cases:

\begin{enumerate}[(a)]
\item $l_g(1)=0$ and $\Lambda$ is any finite subset of $\mathbb{R}^2$; and

\item $l_g(1)\neq 0$ and $\Lambda$ satisfies the difference condition for the second variable.
\end{enumerate}
\end{theorem}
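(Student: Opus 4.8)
The plan is to argue by contradiction along the lines of the proof of Theorem~\ref{thm:hrthardyfield}, but with the well-ordering of a Hardy field replaced by the multiplicative structure of the ratio-limit supplied by Proposition~\ref{prop:ratiolimit} and Proposition~3.3. Suppose $\mathcal{G}(g,\Lambda)$ is linearly dependent. Since the existence of the finite ratio-limits forces $g$ to be ultimately nonzero, after discarding vanishing coefficients and relabeling we may write
$$\sum_{k=1}^N c_k e^{2\pi i \beta_k x} g(x-\alpha_k)=0 \quad \text{a.e.},$$
with the points $(\alpha_k,\beta_k)$ distinct and all $c_k\neq 0$. By Proposition~3.3 we have $|l_g(\alpha)|=a^\alpha$ for some $0\le a\le 1$; case~(a) is precisely $a=0$, so that $l_g(\alpha)=0$ for every $\alpha>0$, while case~(b) is $a>0$, so that $l_g(\alpha)\neq 0$ for every $\alpha$. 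I would then set $M=\max_k \alpha_k$ and divide the relation by the dominant surviving shift $g(\,\cdot\,-M)$.

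For the limiting step, by Lemma~3.8 there is a set $E$ with $|\mathbb{R}\setminus E|=0$ such that, for a fixed sequence $x_n\to\infty$ and every $n$, the relation holds at $x+x_n$ and $g(x+x_n-M)\neq 0$ for all $x\in E$. Passing to a subsequence I may assume $e^{2\pi i \beta_k x_n}\to L_k$ with $|L_k|=1$, using compactness of the torus. Dividing the shifted relation by $g(x+x_n-M)$ and using
$$\frac{g(x+x_n-\alpha_k)}{g(x+x_n-M)}\longrightarrow l_g(M-\alpha_k),$$
which is the ratio-limit at the nonnegative value $M-\alpha_k$ applied with argument $x+x_n-M\to\infty$, I obtain in the limit
$$\sum_{k=1}^N c_k L_k\, l_g(M-\alpha_k)\, e^{2\pi i \beta_k x}=0, \quad x\in E.$$

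In case~(a), $l_g(M-\alpha_k)=0$ unless $\alpha_k=M$, so only the terms with $\alpha_k=M$ survive; among these the points $(M,\beta_k)$ are distinct, hence the corresponding $\beta_k$ are distinct, and since $c_k L_k\neq 0$ this contradicts Proposition~\ref{trigo}. No hypothesis on $\Lambda$ is used, matching the statement. In case~(b) every $l_g(M-\alpha_k)\neq 0$, but the $\beta_k$ need not be distinct, so I would first collect the terms sharing a common frequency $\gamma$. Because $L_k$ depends only on $\beta_k$, Proposition~\ref{trigo} forces $\sum_{k:\,\beta_k=\gamma} c_k\, l_g(M-\alpha_k)=0$ for every value $\gamma$ occurring among the $\beta_k$. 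This is exactly where the difference condition for the second variable enters: it supplies an index $k_0$ whose frequency $\beta_{k_0}$ is shared by no other point, so the corresponding equation reduces to the single term $c_{k_0}\, l_g(M-\alpha_{k_0})=0$, which is impossible since both factors are nonzero.

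The main obstacle is precisely the occurrence of repeated frequencies in case~(b): without the difference condition the per-frequency sums $\sum_{k:\,\beta_k=\gamma} c_k\, l_g(M-\alpha_k)$ could vanish by cancellation, and Proposition~\ref{trigo} would then yield no contradiction, so the difference condition is what isolates a single nonvanishing term. The remaining points — that $g$ is ultimately nonzero so the divisions are legitimate, that the nonnegativity $M-\alpha_k\ge 0$ keeps every invoked ratio-limit within the range $\alpha>0$ where it is assumed to exist, and the measure-theoretic bookkeeping through Lemma~3.8 — are routine and I would treat them briefly.
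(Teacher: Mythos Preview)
Your proof is correct and follows essentially the same approach as the paper: shift the relation by a sequence $x_n\to\infty$ chosen so that each $e^{2\pi i\beta_k x_n}$ converges, divide by a reference translate of $g$, pass to the limit via the ratio-limits and Proposition~3.3, and contradict Proposition~\ref{trigo}. The only cosmetic difference is that the paper first invokes Proposition~1.3 to normalize the relation (placing the minimal-shift terms, respectively the distinguished-frequency term, alone on the left) and then divides by $g$ itself, whereas you divide uniformly by the maximal-shift translate $g(\cdot-M)$ in both cases and isolate the distinguished frequency afterward; your choice has the mild advantage that every ratio-limit invoked is at a nonnegative argument.
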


\begin{proof}
Note that since $g$ has a ratio-limit, then $g$ is ultimately nonzero. Suppose that the HRT conjecture fails. We shall obtain a contradiction for each of the two cases.

{\it (a)} If $l_g(1)=0$, then, by Proposition 3.3, $l_g(\alpha) =0$ for all $\alpha>0$. Using Proposition 1.3, without loss of generality we suppose that
\begin{eqnarray}
\sum_{k=1}^M c_k e^{2 \pi i \beta_k x}g(x)= \sum_{k=M+1}^N c_k e^{2
\pi i \beta_k x}g(x+ \alpha_k) \quad \text{a.e.}, \nonumber
\end{eqnarray}
where $c_1,\ldots, c_M \in \mathbb{C}\setminus \{0\}$, $c_{M+1},\ldots, c_N \in \mathbb{C}$, $\alpha_k >0$ for all $k=M+1, \ldots, N$,
$\beta_1, \ldots, \beta_N \in \mathbb{R}$, and $\beta_{1}, \ldots, \beta_M \in \mathbb{R}$ are distinct.

Let $\{x_n\}_{n \in \mathbb{N}}$ be a positive sequence converging to infinity, with the property that the sequence $\{e^{2 \pi i \beta_k x_n}\}_{n \in \mathbb{N}}$ converges to a limit $L_k$
for each $k \in \{ 1, \ldots, N\}$. Then, $|L_k|=1$, and, in particular, $L_k \neq 0$ for each $k \in \{ 1, \ldots, N\}$. By Lemma 3.8, there is $E \subseteq \mathbb{R}$ such that $\mid \mathbb{R} \setminus E \mid =0$ and, for all $(n,x) \in \mathbb{N} \times E$,
\begin{eqnarray}
\sum_{k=1}^M c_k e^{2 \pi i \beta_k (x + x_n)}g(x + x_n)= \sum_{k=M+1}^N c_k e^{2\pi i \beta_k (x + x_n)}g(x + x_n+\alpha_k). \nonumber
\end{eqnarray}

Let $x \in E$ be fixed. Since $g$ is ultimately nonzero, then, there is $n_0>0$ such that  $g(x + x_n) \neq 0$ for each $n>n_0$,
and so we can write
\begin{eqnarray*}
\sum_{k=1}^M c_k e^{2 \pi i \beta_k (x+x_n)}= \sum_{k=M+1}^N c_k e^{2 \pi i \beta_k (x+x_n)}\frac{g(x+x_n+\alpha_k)}{g(x+x_n)}.
\end{eqnarray*}
Hence, letting $n$ tend to infinity in the last equality, we obtain
\begin{eqnarray}
\sum_{k=1}^M c_k L_ke^{2 \pi i \beta_k x}=0.
\end{eqnarray}
Since $\mid \mathbb{R} \setminus E \mid = 0$, then equality (3.1) holds almost everywhere, and so Proposition \ref{trigo} and the fact that  $L_k\neq 0$ lead to a contradiction.

{\it (b)} If $|l_g(1) | =a \neq 0$,  then, by Proposition 3.3, $|l_g(\alpha)| = a^\alpha$, and, in particular, $l_g(\alpha) \neq 0$ for each $\alpha \in \mathbb{R}$.
Using Proposition 1.3 and the fact that the set $\Lambda$ satisfies the difference condition for the second variable, we suppose that
\begin{eqnarray}
g(x)= \sum_{k=1}^{N} c_k e^{2 \pi i \beta_k x}g(x+ \alpha_k)&
\mbox{a.e.}, \nonumber
\end{eqnarray}
where $c_1,\ldots,c_{N} \in \mathbb{C}$, $\alpha_1, \ldots, \alpha_{N} \in  \mathbb{R}$, and
$\beta_1, \ldots, \beta_{N} \in  \mathbb{R} \setminus \{0\}$.
Let $\{x_n\}$ be a positive sequence converging to infinity, with the property that the sequence $\{e^{2 \pi i \beta_k x_n}\}$ converges to a limit $L_k$ for each $k \in \{ 1, \ldots, N\}$. Proceeding as in  case {\it (a)}, we obtain
\begin{eqnarray}
\sum_{k=1}^{N-1} c_k l_g(\alpha_k)L_ke^{2 \pi i \beta_k x}= 1 \quad \text{ a.e.} \nonumber
\end{eqnarray}
Proposition \ref{trigo} and the facts that $l_g(\alpha_k) \neq 0$, $L_k \neq 0$, $\beta_k \neq 0$ for each $k \in \{ 1, \ldots, N\}$, and
  $c_k \neq 0$ for at least one $k \in \{ 1, \ldots, N\}$ lead to a contradiction.
\end{proof}

\begin{corollary}
Let $g \in L^2(\mathbb{R}) \setminus \{0\}$ and let $\Lambda \subseteq \mathbb{R}^2$ have the property that $\text{card}(\Lambda) \leq 5$. If $g$ and $\widehat{g}$ have ratio limits at every $\alpha \in \mathbb{R}$, then the HRT conjecture holds for $\mathcal{G}(g, \Lambda)$.
\end{corollary}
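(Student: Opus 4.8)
The plan is to reduce the statement, in every case, either to the preceding ratio-limit theorem (applied to $g$ or to $\widehat g$) or to Linnell's lattice result, the only new ingredient being a short counting argument that controls the combinatorics of $\Lambda$ when $\mathrm{card}(\Lambda)\le 5$. Write $N=\text{card}(\Lambda)$, and let $d_\alpha$ and $d_\beta$ be the numbers of distinct first and second coordinates occurring among the points of $\Lambda$. Since the $N$ points are distinct and each lies in the grid (set of distinct $\alpha_k$) $\times$ (set of distinct $\beta_k$), we have $d_\alpha d_\beta \ge N$.

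First I would record the counting fact. If $\Lambda$ fails the difference condition for the second variable, then no $\beta_k$ is isolated, so every value of $\beta_k$ is repeated and $N\ge 2d_\beta$, i.e. $d_\beta\le N/2$; similarly, failing the difference condition for the first variable gives $d_\alpha\le N/2$. Hence if $\Lambda$ fails both conditions then $N\le d_\alpha d_\beta\le N^2/4$, which is impossible for $N\le 3$ and, for $N=5$, is impossible because $d_\alpha,d_\beta\le 2$ forces $d_\alpha d_\beta\le 4<5$. The only surviving possibility with $N\le 5$ is $N=4$ with $d_\alpha=d_\beta=2$, in which case the four distinct points must fill the grid $\{\alpha,\alpha'\}\times\{\beta,\beta'\}$ and so lie in a translate of the full-rank lattice $(\alpha'-\alpha)\mathbb{Z}\times(\beta'-\beta)\mathbb{Z}$.

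With this in hand the argument splits into three cases. If $\Lambda$ satisfies the difference condition for the second variable, I apply the preceding theorem directly to $g$: its part (a) covers $l_g(1)=0$ and its part (b) covers $l_g(1)\ne 0$, so $\mathcal{G}(g,\Lambda)$ is linearly independent. If instead $\Lambda$ satisfies the difference condition for the first variable, I pass to the Fourier transform, which is a metaplectic operator whose associated linear map is the rotation $A\colon(\alpha,\beta)\mapsto(\beta,-\alpha)$. Since the second coordinates of $A(\Lambda)$ are the numbers $-\alpha_k$, the set $A(\Lambda)$ satisfies the difference condition for the second variable; and because $\widehat g\in L^2(\mathbb{R})$ also has ratio-limits at every $\alpha$, the preceding theorem applies to $\widehat g$ and $A(\Lambda)$ and yields linear independence of $\mathcal{G}(\widehat g,A(\Lambda))$. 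Proposition~\ref{meta2} then transfers this back to linear independence of $\mathcal{G}(g,\Lambda)$. Finally, if $\Lambda$ satisfies neither condition, the counting step forces $N=4$ and the $2\times2$ grid, which lies in a translate of a full-rank lattice, so the HRT conjecture holds by Linnell's theorem (\cite{Lin}, result (6) of the introduction).

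The conceptual content is entirely in the counting step; the rest is bookkeeping. The point I would be most careful about is the precise correspondence of the two difference conditions under the metaplectic map induced by the Fourier transform, together with the verification that the hypotheses of the ratio-limit theorem genuinely transfer to $\widehat g$ — which is exactly why both $g$ and $\widehat g$ are assumed to have ratio-limits. The only genuinely separate case is the $2\times2$ grid at $N=4$, and the obstacle there, namely that neither difference condition is available so the ratio-limit theorem cannot be invoked for $g$ or for $\widehat g$, is precisely what Linnell's lattice result removes.
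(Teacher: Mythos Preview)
Your proof is correct and follows essentially the same route as the paper: reduce to Theorem~3.9 applied to $g$ (second-variable difference condition) or to $\widehat g$ (first-variable difference condition via the Fourier transform), and handle the single exceptional configuration---the $2\times2$ grid at $N=4$---by Linnell's lattice result. Your explicit counting argument $N\le d_\alpha d_\beta\le (N/2)^2$ is a clean unification of what the paper does case-by-case (invoking the known $N\le3$ result separately and then treating $N=4$ and $N=5$ in turn), but the underlying idea is identical.
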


\begin{proof}
Suppose that $g$ and $\widehat{g}$ have ratio limits at
every $\alpha \in \mathbb{R}$.

If card$(\Lambda) \leq 3$, then the result is a consequence of known results, see Section 1.

Let card$(\Lambda)= 4$. By using the Fourier transform and the previous case, the only case which cannot follow by Theorem 3.9 is when
$$\Lambda =\{(\alpha_1,\beta_1), (\alpha_1,\beta_2),(\alpha_2,\beta_1),(\alpha_2,\beta_2) \}.$$
Hence, $\Lambda$ lies in a lattice and the HRT conjecture  holds for $\mathcal{G}(g, \Lambda)$ by known results, see Section 1.

Let card$(\Lambda)= 5$. Either  $\mathcal{G}(g,\Lambda)$ or $\mathcal{G}(\widehat{g},\widehat{\Lambda})$ satisfies the second difference condition,
where $\widehat{\Lambda}=\{(\beta,-\alpha): (\alpha,\beta) \in \Lambda \}$, and so we can apply Theorem 3.9 after using the previous cases.
\end{proof}

\begin{corollary} Let $E$ be a real vector space of real-valued functions having their germs in a Hardy field $\mathcal{H}$. Let $\Lambda$ be a set of finitely
many distinct points in $\mathbb{R}^2$ satisfying the difference condition for the second variable. The HRT conjecture holds for
$\mathcal{G}(g,\Lambda)$ if $g \thicksim h $, where $h$ is a finite linear combination (with complex coefficients) of functions in $E$.
\end{corollary}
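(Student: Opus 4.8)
The plan is to reduce the statement to a single application of Theorem 3.9 by showing that $g$ possesses a ratio-limit $l_g(\alpha)$ at every $\alpha>0$. Once this is in hand, the difference condition for the second variable is exactly what is needed to finish: if $l_g(1)=0$ then Theorem 3.9(a) applies, and if $l_g(1)\neq 0$ then Theorem 3.9(b) applies, so in either case the HRT conjecture holds for $\mathcal{G}(g,\Lambda)$. Moreover, since $g\sim h$, Proposition \ref{prop:ratiolimit}(b) says that $g$ has a ratio-limit at $\alpha$ precisely when $h$ does, with the same value; hence it suffices to prove that $h$ has a ratio-limit at every $\alpha>0$.

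Next I would decompose $h$. Writing each complex coefficient as $a_j+ib_j$ and using that $E$ is a \emph{real} vector space of real-valued functions, one gets $h=u+iv$ with $u,v\in E$, so that $germ(u),germ(v)\in\mathcal{H}$. Since $g\in L^2(\mathbb{R})$ and $g\sim h$, the function $h$ is ultimately square-integrable, and as $|h|^2=u^2+v^2$ dominates each of $u^2$ and $v^2$, both $u$ and $v$ are ultimately square-integrable. I would then show that each of $u,v$ with nonzero germ has a finite ratio-limit at $\alpha$: by Proposition \ref{prop:hardyfield}(a) such a function is ultimately of constant sign and ultimately $C^\infty$, so its logarithmic derivative ultimately exists; since $\mathcal{H}$ is a field closed under differentiation, $germ(f'/f)\in\mathcal{H}$, whence $f'/f$ is ultimately monotone and has a limit $l\in\mathbb{R}\cup\{\pm\infty\}$. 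Ultimate square-integrability excludes $l=+\infty$ and $l>0$, each of which would force at-least-exponential growth; thus $l\in[-\infty,0]$, and Proposition 3.6 gives $l_f(\alpha)=e^{l\alpha}\in[0,1]$.

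To pass from $u$ and $v$ to $h=u+iv$, I would use the comparability furnished by Proposition \ref{prop:hardyfield}(b),(c): among $germ(u),germ(v)$ one dominates with respect to $\preceq$, say $germ(v)\preceq germ(u)$ with $germ(u)\neq 0$, so that $v/u\to c$ for some finite real $c$ (the sub-case $germ(v)=0$ folds in with $c=0$). Factoring,
\[
\frac{h(x+\alpha)}{h(x)}=\frac{u(x+\alpha)}{u(x)}\cdot\frac{1+i\,v(x+\alpha)/u(x+\alpha)}{1+i\,v(x)/u(x)},
\]
the right-hand fraction tends to $(1+ic)/(1+ic)=1$ (here $1+ic\neq 0$ because $c$ is real) while $u(x+\alpha)/u(x)\to l_u(\alpha)$, so $l_h(\alpha)=l_u(\alpha)$ exists and is finite. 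The only excluded case is $germ(u)=germ(v)=0$, i.e. $h\sim 0$ and hence $g\sim 0$; then $g$ is supported on a half-line, for which the HRT conjecture is already known by result (1) of Section 1. This completes the reduction to Theorem 3.9.

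The step I expect to be the main obstacle is precisely this last combination: a complex linear combination of Hardy-field functions need not itself have its germ in a Hardy field, so the existence of $l_h(\alpha)$ cannot be read off directly from Proposition 3.6 and must instead be extracted from the asymptotic comparability of the real functions $u$ and $v$. The square-integrability hypothesis is used crucially twice here, both to guarantee that $u$ and $v$ are individually ultimately $L^2$ (forcing their logarithmic-derivative limits to be $\le 0$ and the ratio-limits to be finite) and to dispose of the degenerate case $h\sim 0$ via the half-line result.
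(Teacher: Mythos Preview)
Your proposal is correct and follows the same strategy as the paper: show that any complex linear combination $h$ of functions in $E$ is either ultimately zero (so $g$ is half-line supported and result (1) of Section~1 applies) or has a ratio-limit at every $\alpha>0$, then invoke Theorem~3.9 together with Proposition~\ref{prop:ratiolimit}(b). The paper's own proof consists of a single sentence stating exactly this dichotomy without justification; what you have written is precisely the argument needed to substantiate that sentence, via the decomposition $h=u+iv$ with $u,v\in E$, the logarithmic-derivative route (Proposition~3.6) to obtain $l_u(\alpha),l_v(\alpha)\in[0,1]$, and the $\preceq$-comparability of $germ(u)$ and $germ(v)$ to pass to $l_h(\alpha)$.
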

\begin{proof} It suffices to notice that every finite linear combination of functions in $E$ is either half-line supported or has a ratio-limit at each positive number.
\end{proof}
Unlike Theorem \ref{thm:hrthardyfield}, $g$ does not need to be ultimately analytic and $\mathcal{H}$ is not required to be closed under translations in the case of Corollary 3.11.
\section{The HRT Conjecture for Functions with Exponential Decay}
\begin{lemma}
Let $\alpha  > 0$, let $M\geq 2$, and let $\beta_1,...,\beta_M \in \mathbb{R}$. For each $n \in \{1,...,M\}$, we define
\begin{eqnarray*}
\forall m=0,...,n-1,  \quad B_n(m) &=& \sum_{M-n+1\leq t_1<...<t_{n-m} \leq M}  e^{2 \pi i( b_{t_1} +b_{t_2}...+b_{t_{n-m})}\alpha}.
\end{eqnarray*}
Then, for each $n \in \{1,...,M-1\}$, we have
\begin{eqnarray*}
&&\forall m=1,...,n-1, \quad B_{n+1}(m)=e^{2\pi i b_{M-n}\alpha}B_n(m)+ B_n(m-1),\\
&&B_{n+1}(0)= e^{2\pi i b_{M-n}\alpha}B_n(0),  \mbox{ and }  B_{n+1}(n)= B_n(n-1)+e^{2\pi i b_{M-n}\alpha}.
\end{eqnarray*}
\end{lemma}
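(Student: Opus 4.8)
The plan is to recognize each $B_n(m)$ as an elementary symmetric expression and then to derive all three recurrences from a single combinatorial splitting. Set $y_j = e^{2\pi i b_j \alpha}$ for each index $j$. Directly from the definition,
$$B_n(m) = \sum_{M-n+1 \leq t_1 < \cdots < t_{n-m} \leq M} y_{t_1} y_{t_2} \cdots y_{t_{n-m}},$$
so $B_n(m)$ is the sum of all products of exactly $n-m$ of the $n$ numbers $y_{M-n+1}, \ldots, y_M$; in other words, it is the elementary symmetric function of degree $n-m$ in those $n$ variables. This reformulation is the whole conceptual content, and the rest is bookkeeping.

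First I would write $B_{n+1}(m)$ explicitly. Passing from $n$ to $n+1$ enlarges the index block from $\{M-n+1,\ldots,M\}$ to $\{M-n,\ldots,M\}$ by adjoining the single new smallest index $M-n$, while the number of factors becomes $(n+1)-m$, so
$$B_{n+1}(m) = \sum_{M-n \leq t_1 < \cdots < t_{(n+1)-m} \leq M} y_{t_1} \cdots y_{t_{(n+1)-m}}.$$
The key step is to split this sum according to whether the new index $M-n$ appears among $t_1,\ldots,t_{(n+1)-m}$. If $M-n$ does not appear, all $(n+1)-m$ factors are drawn from $\{M-n+1,\ldots,M\}$; since $(n+1)-m = n-(m-1)$, this contribution is exactly $B_n(m-1)$. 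If $M-n$ does appear, it must be $t_1$, and the remaining $n-m$ factors come from $\{M-n+1,\ldots,M\}$; factoring out $y_{M-n}=e^{2\pi i b_{M-n}\alpha}$, this contribution equals $e^{2\pi i b_{M-n}\alpha} B_n(m)$. Summing the two yields
$$B_{n+1}(m) = e^{2\pi i b_{M-n}\alpha} B_n(m) + B_n(m-1),$$
which is the asserted recurrence for $1 \leq m \leq n-1$.

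Finally I would treat the two extreme values of $m$, where one of the two contributions degenerates. When $m=0$ there are $n+1$ factors but only $n$ indices available in $\{M-n+1,\ldots,M\}$, so the "does-not-appear" sum is empty and only the second term survives, giving $B_{n+1}(0) = e^{2\pi i b_{M-n}\alpha} B_n(0)$. When $m=n$ the "does-appear" term has no factors left after $y_{M-n}$ (the empty product equals $1$), so it reduces to $e^{2\pi i b_{M-n}\alpha}$, while the "does-not-appear" term is $B_n(n-1)$; hence $B_{n+1}(n) = B_n(n-1) + e^{2\pi i b_{M-n}\alpha}$. I do not anticipate any genuine obstacle: the entire statement is the standard add-one-variable recurrence for elementary symmetric functions, and the only care needed is bookkeeping—aligning the block $\{M-n+1,\ldots,M\}$ with $\{M-n,\ldots,M\}$, translating between "number of factors $n-m$" and symmetric-function degree, and isolating the two boundary values of $m$ at which a contribution is vacuous.
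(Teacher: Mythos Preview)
Your proof is correct. The paper states this lemma without proof, evidently treating it as a routine combinatorial verification; your recognition that $B_n(m)$ is the elementary symmetric function of degree $n-m$ in the variables $e^{2\pi i b_{M-n+1}\alpha},\ldots,e^{2\pi i b_M\alpha}$, together with the standard add-one-variable recurrence for elementary symmetric polynomials, is exactly the right way to see it and handles all three identities at once.
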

\begin{theorem}
Let $g$ be a measurable function on $\mathbb{R}$ such that  $e^{tx}g(x) \in L^1(\mathbb{R}) \setminus \{0\}$ for all $t>0$. Let $\Lambda=\{(\alpha_k, \beta_k)\}_{k=1}^N \subseteq \mathbb{R}^2$.
\begin{enumerate}[(a)]
  \item If,  for some $ k_0 \in \{1,......., N\}$, we have  $\alpha_k>\alpha_{k_0}$  for each $ k \in \{1,......., N\} \setminus \{k_0\}$, then the HRT conjecture holds for $\mathcal{G}(g, \Lambda)$.
  \item If $|g|$ is  ultimately decreasing, then the HRT conjecture holds for $\mathcal{G}(g, \Lambda)$, where $\Lambda$ is arbitrary.
\end{enumerate}
\end{theorem}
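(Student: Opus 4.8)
The plan is to argue by contradiction, reduce the failure of linear independence to a single normalized dependence relation, and then exploit the superexponential decay through the one-sided Laplace transform, which the hypothesis $e^{tx}g\in L^1(\mathbb{R})$ renders holomorphic on a half-plane. First I would set up the reduction. Assuming $\mathcal{G}(g,\Lambda)$ is dependent, I delete vanishing coefficients; I also dispose of the degenerate case in which all frequencies coincide, since there the relation is a pure sum of translates and, after taking Fourier transforms, Proposition~\ref{trigo} forces all coefficients to vanish. Translation and modulation are metaplectic (Proposition~\ref{meta2}) and preserve the class, because $|M_\beta g|=|g|$ and $e^{tx}g(x-\alpha)=e^{t\alpha}e^{t(x-\alpha)}g(x-\alpha)\in L^1$. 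In case (a) the hypothesis singles out a unique smallest $\alpha_{k_0}$, so after translating its translation parameter to $0$ and modulating its frequency to $0$ the relation becomes
\begin{equation*}
g(x)=\sum_{k\neq k_0} d_k\,e^{2\pi i\beta_k x}\,g(x-\alpha_k),\qquad \alpha_k>0,\ d_k\neq 0 .
\end{equation*}

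Next comes the transform and the isolation of the extreme term. Because $e^{tx}g\in L^1(\mathbb{R})$ for every $t>0$, the transform $\mathcal{L}(s)=\int_{\mathbb{R}} g(x)e^{2\pi s x}\,dx$ converges and is holomorphic on $\{\operatorname{Re}s>0\}$, and $\mathcal{L}\not\equiv 0$ since $g\neq 0$. Applying $\mathcal{L}$ to the displayed relation and shifting the integration variable in each term yields
\begin{equation*}
\mathcal{L}(s)=\sum_{k\neq k_0} d_k\,e^{2\pi (s+i\beta_k)\alpha_k}\,\mathcal{L}(s+i\beta_k),\qquad \operatorname{Re}s>0 .
\end{equation*}
The left side carries the trivial rate $e^{2\pi s\cdot 0}$, while every right-side term carries a genuine rate $e^{2\pi s\alpha_k}$ with $\alpha_k>0$. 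Letting $\operatorname{Re}s\to+\infty$, the strictly slower-growing left side can be matched only if the terms of maximal rate cancel among themselves; isolating the group with the largest $\alpha_k$ reduces to a relation of the form $\sum_{k\in\mathrm{top}}\gamma_k\,\mathcal{L}(s+i\beta_k)\to 0$ with distinct $\beta_k$. A singleton top group already contradicts $\mathcal{L}\not\equiv0$ and $\gamma_k\neq0$; otherwise it is a pure-frequency relation to be killed by Proposition~\ref{trigo} once the amplitudes are separated. To drive the descent I would iterate the relation into itself: each substitution replaces a translate by a sum of further translates, and the coefficients that accumulate are exactly the elementary symmetric expressions in the $e^{2\pi i\beta_k\alpha}$ whose recursion is recorded in Lemma~4.1; tracking them should reduce the configuration, in finitely many steps, to a single surviving time-frequency shift.

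For part (b) the hypothesis that $|g|$ is ultimately decreasing replaces the analytic separation by a pointwise one and allows arbitrary $\Lambda$. Grouping the terms by their distinct translations $a_1<\dots<a_p$ and writing $P_j$ for the corresponding trigonometric polynomials, the relation is $\sum_j P_j(x)g(x-a_j)=0$; monotonicity orders the factors $|g(x-a_j)|$, and together with the superexponential decay it should show that, after dividing by $g(x-a_p)$, the lower-translation groups become negligible, so that $P_p\equiv0$ by Proposition~\ref{trigo}, after which one descends through $a_{p-1},\dots,a_1$. The main obstacle, common to both parts, is precisely this amplitude control. Unlike the Gaussian case, where $\mathcal{L}(s+i\beta)/\mathcal{L}(s)$ is a clean exponential, in general this ratio is unknown, and $|g(x-a_j)|/|g(x-a_p)|$ need not tend to $0$ pointwise (consider a decreasing step-like $g$). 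Making the ``highest-rate part must vanish'' step rigorous — ruling out a conspiracy between the oscillation of $\mathcal{L}$ (respectively the non-convergence of the pointwise ratios) and the separation furnished by the distinct $\alpha_k$ — is the crux; the monotonicity of $|g|$ in (b) and the symmetric-function identities of Lemma~4.1 in (a) are what I expect to supply the missing regularity.
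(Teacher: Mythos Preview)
Your Laplace-transform route for (a) hits exactly the oscillation obstacle you name, and the paper sidesteps it with a much cruder move: replace the holomorphic $\mathcal{L}(s)=\int g\,e^{2\pi sx}\,dx$ by the real, positive quantity $I(t)=\int_{\mathbb{R}}|g(x)|e^{tx}\,dx$, finite for every $t>0$ by hypothesis. Taking absolute values in the normalized relation $g(x)=\sum_k c_k e^{2\pi i\beta_k x}g(x+\alpha_k)$ kills the modulations outright; multiplying by $e^{tx}$ and integrating gives $I(t)\le\sum_k|c_k|\,e^{-t\alpha_k}I(t)$, hence $1\le\sum_k|c_k|\,e^{-t\alpha_k}\to 0$ as $t\to\infty$. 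No control of $\mathcal{L}(s+i\beta)/\mathcal{L}(s)$ is needed, no iteration, no Lemma~4.1 --- part (a) is three lines once you pass to $|g|$.

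For (b), the descent you sketch (group by translation, divide by the extreme translate, argue the other groups become negligible) fails for precisely the reason you give, and the paper's argument is structurally different. After normalization the relation reads $\sum_{m=1}^{M}a_m e^{2\pi i b_m x}g(x)=G(x)$, where $G$ collects all terms with strictly positive translation and $b_1,\dots,b_M$ are the distinct frequencies over translation zero. The elimination recorded in Lemma~4.1 is applied to this \emph{left} side, not inside any transform: writing the relation at $x$ and at $x+\alpha$ and forming a suitable combination cancels one $e^{2\pi i b_m x}$, and after $M-1$ steps a single frequency survives on the left, carrying the Vandermonde factor $a_1\prod_{m\ge 2}(e^{2\pi i b_m\alpha}-e^{2\pi i b_1\alpha})$, while the right side becomes $\sum_{m=0}^{M-1}B(m)\,\tfrac{g(x+(M-1)\alpha)}{g(x+m\alpha)}\,G(x+m\alpha)$. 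The ultimate monotonicity of $|g|$ is used only to bound those ratios by $1$ for large $x$ --- not to make them vanish. With the left side now a single time-frequency shift of $g$, the absolute-value integral trick from (a) finishes the job: integrating $|\cdot|e^{tx}$, changing variables in each $G(x+m\alpha)$ term, and choosing $0<(M-1)\alpha<\min_k\alpha_k$ forces the Vandermonde product to be $\le 0$ for an interval of $\alpha$, a contradiction. In short, the symmetric-function bookkeeping you earmarked for (a) is actually the engine of (b), and in both parts the analytic crux is the crude inequality on $\int|g|e^{tx}$, which makes the ``conspiracy'' you worried about impossible.
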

\begin{proof} \emph{(a)} Suppose that the HRT conjecture fails for  $\mathcal{G}(g,\{(\alpha_k, \beta_k)\}_{k=0}^N)$. If $\alpha_k>\alpha_0$  for each $ k \in \{1,......., N\}$, we use Proposition 1.3 to assume,  without loss of generality, that $(\alpha_0, \beta_0)=(0,0)$, and so we can write
$$g(x)= \sum_{k=1}^N c_k e^{2 \pi i \beta_k x}g(x+\alpha_k)\quad \mbox{a.e.},$$
where $c_1,...,c_N \in \mathbb{C} $, $\beta_1,...,\beta_N \in \mathbb{R}$, and
$\alpha_1,...,\alpha_N >0$. Therefore,
\begin{eqnarray*}
\forall t>0,  \quad \int g(x) e^{tx} dx \leq \sum_{k=1}^N |c_k|e^{-t\alpha_k} \int g(x) e^{tx} dx,
\end{eqnarray*}
and so
\begin{eqnarray*}
\forall t>0, \quad  1 \leq \sum_{k=1}^N |c_k|e^{-t\alpha_k}.
\end{eqnarray*}
Letting $t$ tend to $\infty$ in the last inequality leads to the desired contradiction.

\emph{(b) i.} If the HRT conjecture fails for  $\mathcal{G}(g,\Lambda)$, for some finite subset $\Lambda \subset \mathbb{R}^2$, we use Proposition 1.3 to assume,  without loss of generality, that
\begin{eqnarray*}
\sum_{m=1}^M a_m e^{2 \pi i b_m x}g(x)= G(x)\quad \mbox{a.e.}, \ \  \mbox{ where } \ \  G(x)= \sum_{k=1}^N c_k e^{2 \pi i \beta_k x}g(x+\alpha_k),
\end{eqnarray*}
$a_1,...,a_M,c_1,...,c_N \in \mathbb{C} \setminus \{0\}$, $b_1, ...,b_M$ are distinct real numbers, $\beta_1,...,\beta_N \in \mathbb{R}$, and
$\alpha_1,...,\alpha_N> 0$. Since \emph{(a)} deals with the case $M=1$, we assume that $M \geq 2$.

Since $|g|$ is ultimately decreasing, then either $g$ is supported on a half-line (in which case the HRT conjecture is satisfied, see Section 1) or $g$ is ultimately nonzero. Thus, without loss of generality, we suppose that $g$ is ultimately nonzero; and for the sake of simplicity, we assume that $g$ never vanishes.

\emph{ii.} Let $\alpha  > 0$. Then, we have
\begin{eqnarray*}
\sum_{m=1}^M a_m e^{2 \pi i b_m (x+\alpha)}g(x+\alpha)&=& G(x+\alpha) \quad \mbox{a.e.}
\end{eqnarray*}
Therefore, we compute
\begin{eqnarray*}
&&g(x+\alpha)e^{2 \pi i b_M \alpha}\sum_{m=1}^M a_m e^{2 \pi i b_m x}g(x)- g(x)\sum_{m=1}^M a_m e^{2 \pi i b_m (x+\alpha)}g(x+\alpha)  \\
&=&g(x+\alpha)G(x)e^{2 \pi i b_M \alpha} - g(x)G(x+\alpha) \quad \mbox{a.e.},
\end{eqnarray*}
and so
\begin{eqnarray*}
\sum_{m=1}^{M-1} a_m (e^{2 \pi i b_M \alpha} -e^{2 \pi i b_m \alpha}) e^{2 \pi i b_m x}g(x+\alpha)\\= \frac{g(x+\alpha)}{g(x)}G(x)e^{2 \pi i b_M \alpha} - G(x+\alpha) \quad \mbox{a.e.}
\end{eqnarray*}

After iterating the above process three times, we obtain
\begin{eqnarray*}
&&\sum_{m=1}^{M-3} a_m \prod_{l=M-2}^{M} (e^{2 \pi i b_l \alpha} -e^{2 \pi i b_m \alpha}) e^{2 \pi i b_m
x}g(x+n\alpha)\\&=&\frac{g(x+3\alpha)}{g(x)}G(x)e^{2 \pi i (b_M+b_{M-1} +b_{M-2}) \alpha}- \frac{g(x+3\alpha)}{g(x+\alpha)}G(x+\alpha)[\\
&&e^{2 \pi i (b_M+b_{M-1} ) \alpha}+e^{2 \pi i (b_M+b_{M-2} ) \alpha}+e^{2 \pi i (b_{M-1}+b_{M-2} ) \alpha}]\\&+& \frac{g(x+3\alpha)}{g(x+2\alpha)}G(x+2\alpha)[e^{2 \pi i b_M \alpha}+e^{2 \pi i b_{M-1} \alpha}+
e^{2 \pi i b_{M-2}  \alpha}]\\ &-& G(x+3\alpha) \quad \mbox{a.e.}
\end{eqnarray*}

Now, we invoke Lemma 4.1 to prove by induction on $n$ that the equality
\begin{eqnarray}
\sum_{m=1}^{M-n} a_m \prod_{l=M-n+1}^{M} (e^{2 \pi i b_l \alpha} -e^{2 \pi i b_m \alpha}) e^{2 \pi i b_m x}g(x+n\alpha) \\=
\sum_{m=0}^{n-1} (-1)^m B_n(m)\frac{g(x+n\alpha)}{g(x+m\alpha)}G(x+m\alpha)+(-1)^n G(x+n\alpha) \quad \mbox{a.e.} \nonumber
\end{eqnarray}
holds for each $n \in \{1,...,M-1\}$.

Writing equality (4.1) for $x+\alpha$ yields the equality
\begin{eqnarray}
&&\sum_{m=1}^{M-n} a_m \prod_{l=M-n+1}^{M} (e^{2 \pi i b_l \alpha} -e^{2 \pi i b_m \alpha}) e^{2 \pi i b_m (x+\alpha)}g(x+(n+1)\alpha)\\
&=&\sum_{m=0}^{n-1} (-1)^mB_n(m)\frac{g(x+(n+1)\alpha)}{g(x+(m+1)\alpha)}G(x+(m+1)\alpha)+(-1)^n G(x+(n+1)\alpha)\nonumber\\
&=&\sum_{m=1}^{n} (-1)^{m-1}B_n(m-1)\frac{g(x+(n+1)\alpha)}{g(x+m\alpha)}G(x+m\alpha)\nonumber \\&+&(-1)^n G(x+(n+1)\alpha) \quad \mbox{a.e.} \nonumber
\end{eqnarray}
Meanwhile, multiplying the two sides of equality (4.1) by  $e^{2\pi i b_{M-n}\alpha}g(x+(n+1)\alpha)/g(x+n\alpha)$ yields the equality
\begin{eqnarray*}
&&\sum_{m=1}^{M-n} a_m \prod_{l=M-n+1}^{M} (e^{2 \pi i b_l \alpha} -e^{2 \pi i b_m \alpha})e^{2\pi i b_{M-n}\alpha} e^{2 \pi i b_m x}g(x+(n+1)\alpha)\\ &&=
\sum_{m=0}^{n-1} e^{2\pi i b_{M-n}\alpha}(-1)^m B_n(m)\frac{g(x+(n+1)\alpha)}{g(x+m\alpha)}G(x+m\alpha)\\
&&+(-1)^n e^{2\pi i b_{M-n}\alpha}\frac{g(x+(n+1)\alpha)}{g(x+n\alpha)} G(x+n\alpha) \quad \mbox{a.e.} \nonumber
\end{eqnarray*}
Therefore, subtracting equality (4.2) from the last equality, we obtain
\begin{eqnarray*}
&&\sum_{m=1}^{M-n-1} a_m \prod_{l=M-n}^{M} (e^{2 \pi i b_l \alpha} -e^{2 \pi i b_m \alpha}) e^{2 \pi i b_m x}g(x+(n+1)\alpha)\\
&&= e^{2\pi i b_{M-n}\alpha}B_n(0)\frac{g(x+(n+1)\alpha)}{g(x)}G(x) \nonumber \\
&&+\sum_{m=1}^{n-1}(-1)^m \left[e^{2\pi i b_{M-n}\alpha}B_n(m)+ B_n(m-1)\right] \frac{g(x+(n+1)\alpha)}{g(x+m\alpha)}G(x+m\alpha) \nonumber \\
&&+ (-1)^{n}\left[B_n(n-1)+e^{2\pi i b_{M-n}\alpha} \right]\frac{g(x+(n+1)\alpha)}{g(x+n\alpha)}G(x+n\alpha) \nonumber \\
&&-(-1)^{n} G(x+(n+1)\alpha)\quad \mbox{a.e.},\nonumber
\end{eqnarray*}
and so, using Lemma 4.1, we conclude that
\begin{eqnarray*}
\sum_{m=1}^{M-n-1} a_m \prod_{l=M-n}^{M} (e^{2 \pi i b_l \alpha} -e^{2 \pi i b_m \alpha}) e^{2 \pi i b_m x}g(x+(n+1)\alpha)\\
=\sum_{m=0}^{n} (-1)^m B_{n+1}(m)\frac{g(x+(n+1)\alpha)}{g(x+m\alpha)}G(x+m\alpha)+(-1)^{n+1} G(x+(n+1)\alpha)\quad \mbox{a.e.}; \nonumber
\end{eqnarray*}
and this completes the induction proof.

\emph{iii.} Writing (4.1) for $M-1$ yields the equality
\begin{eqnarray*}
a_1 \prod_{m=2}^{M} (e^{2 \pi i b_m \alpha} -e^{2 \pi i b_1 \alpha}) e^{2 \pi i b_1 x}g(x+(M-1)\alpha)\\=  \sum_{m=0}^{M-1} B(m)\frac{g(x+(M-1)\alpha)}{g(x+m\alpha)}G(x+m\alpha) \quad \mbox{a.e.},
\end{eqnarray*}
where, for $0\leq m \leq M-2$, $B(m)=(-1)^m B_{M-1}(m)$ and $B(M-1)=(-1)^{M-1}$.

Let $t>0$. Since $\mid g \mid $ is ultimately decreasing, there is $A \in \mathbb{R}$ for which \\ $|g(x+(M-1)\alpha)/g(x+m\alpha) | < 1$, for $0\leq m\leq M-1$ and for all $x>A$. Therefore,
\begin{eqnarray*}
&& |a_1\prod_{m=2}^{M} (e^{2 \pi i b_m \alpha} -e^{2 \pi i b_1 \alpha}) | \int_{A}^\infty  \mid g(x+(M-1)\alpha) | e^{tx} dx\\
&&\leq \sum_{m=0}^{M-1} | B(m)| \int_{A}^\infty | G(x+m\alpha)| e^{tx} dx.
\end{eqnarray*}

By the definition of $G$, we compute
\begin{eqnarray*}
&& | a_1\prod_{m=2}^{M} (e^{2 \pi i b_m \alpha} -e^{2 \pi i b_1 \alpha}) | \int_{A}^\infty  | g(x+(M-1)\alpha) | e^{tx} dx\\
&&\leq \sum_{m=0}^{M-1} | B(m)\mid  \sum_{k=1}^{N} \mid c_k | \int_{A}^\infty | g(x+\alpha_k+m\alpha)| e^{tx} dx,
\end{eqnarray*}
and so
\begin{eqnarray*}
&& | a_1\prod_{m=2}^{M} (e^{2 \pi i b_m \alpha} -e^{2 \pi i b_1 \alpha}) | \int_{A}^\infty  | g(x+(M-1)\alpha) | e^{tx} dx\\
&&\leq \sum_{m=0}^{M-1} | B(m)\mid  \sum_{k=1}^{N} | c_k | \int_{A}^\infty | g(x+(M-1)\alpha)| e^{t(x-\alpha_k+(M-1-m)\alpha)} dx.
\end{eqnarray*}

Choosing $\alpha$ such that $0< (M-1)\alpha < \inf \{ \alpha_1,...,\alpha_N\}$, we can write
\begin{eqnarray}
&& \mid a_1\prod_{m=2}^{M} (e^{2 \pi i b_m \alpha} -e^{2 \pi i b_1 \alpha}) \mid \int_{A}^\infty  \mid g(x+(M-1)\alpha) \mid e^{tx} dx\\
&&\leq \sum_{m=0}^{M-1} \mid B(m)\mid  \sum_{k=1}^{N} \mid c_k \mid e^{-t(\alpha_k -(M-1-m)\alpha)} \int_{A}^\infty \mid g(x+(M-1)\alpha)\mid e^{tx} dx.\nonumber
\end{eqnarray}
Then, using the fact that $\mid g \mid $ is ultimately positive, we can simplify (4.3) and obtain
\begin{eqnarray*}
\mid a_1\prod_{m=2}^{M} (e^{2 \pi i b_m \alpha} -e^{2 \pi i b_1 \alpha}) \mid
\leq \sum_{m=0}^{M-1} \mid B(m)\mid  \sum_{k=1}^{N} \mid c_k \mid e^{-t(\alpha_k -(M-1-m)\alpha)}.
\end{eqnarray*}

Letting $t$  tend $\infty$ in the last inequality yields the contradiction
\begin{eqnarray*}
\forall 0<\alpha  < \frac{\inf \{ \alpha_1,...,\alpha_N\}}{M-1}, \quad \mid a_1\prod_{m=2}^{M} (e^{2 \pi i b_m \alpha} -e^{2 \pi i b_1 \alpha}) \mid \leq 0.
\end{eqnarray*}
\end{proof}

\begin{remark}\emph{(a)} Let $A \in \mathbb{R}$. Theorem 4.1 remains true if we replace its first assumption with the assumption that $g$ is a measurable function on $\mathbb{R}$ such that  $e^{tx}g(x) \in L^1([A,\infty))$ for all $t>0$.

\emph{(b)} Let $p>1$. Theorem 4.1 remains true if we replace its first assumption with the assumption that $g$ is a measurable function on $\mathbb{R}$ such that  $e^{tx}g \in L^p(\mathbb{R})$ for all $t>0$.

\emph{(c)} Theorem 4.1 stays true if we replace its first assumption with the assumption that $g$ is a measurable function on $\mathbb{R}$ such that  $\lim_{x \rightarrow \infty} e^{tx}g(x) =0$, for all $t>0$. This result was recently obtained independently in ~\cite{Bow2} by using different techniques.

\emph{(d)} Statement \emph{(b)} of Theorem 4.1 remains true if we replace the assumption that $g$ is ultimately decreasing  with the weaker assumption that for each $a >0$, $|g(x+a)/g(x)|$ is ultimately bounded. This is the case if $|g|$ ultimately has a bounded logarithmic derivative.
\end{remark}

\section{The HRT Conjecture for Positive Functions}
For this section we require the following result, see ~\cite{Ben} Section 3.2.12, ~\cite{Har1}  Chapter XXIII, ~\cite{Kat} Chapter VI.9, ~\cite{Shi}.
\begin{theorem}[Kronecker's Approximation Theorem]\label{thm:kronecker}
Let $\{\beta_1,\ldots, \beta_N\} \subseteq \mathbb{R}$ be a linearly independent set over $\mathbb{Q}$, and let
$\theta_1,\ldots, \theta_N \in \mathbb{R}$. If $U, \varepsilon >0$, then there exist $p_1, \ldots, p_N \in \mathbb{Z}$ and $u >U$ such that
$$\forall k = 1, \ldots, N, \quad |\beta_ku - p_k - \theta_k| < \varepsilon,$$
and, therefore,
$$\forall k = 1, \ldots, N, \quad |e^{2 \pi i \beta_k u}- e^{2 \pi i \theta_k}| < 4 \pi \varepsilon.$$
\end{theorem}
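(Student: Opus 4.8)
The plan is to translate the statement into a density assertion for the one-parameter flow on the torus $\mathbb{T}^N = \mathbb{R}^N/\mathbb{Z}^N$ defined by $\phi(u) = (\beta_1 u, \ldots, \beta_N u) \bmod \mathbb{Z}^N$, and then read off the integers $p_k$ and the exponential estimate from it. Indeed, suppose I have shown that for every $U, \varepsilon > 0$ there is some $u > U$ whose image $\phi(u)$ lies within torus-distance $\varepsilon$ of the target point $(\theta_1, \ldots, \theta_N) \bmod \mathbb{Z}^N$. Choosing such a $u$ and letting $p_k$ be the nearest integer to $\beta_k u - \theta_k$ gives exactly $|\beta_k u - p_k - \theta_k| < \varepsilon$ for all $k$, which is the first displayed inequality. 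So the entire content of the theorem is the density of the flow at large times, and the hypothesis that $\beta_1,\ldots,\beta_N$ are linearly independent over $\mathbb{Q}$ is precisely what will drive it.

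To establish density I would use the continuous Weyl equidistribution criterion. For each nonzero $\mathbf m = (m_1,\ldots,m_N) \in \mathbb{Z}^N$ one computes
\[
\frac{1}{T}\int_0^T e^{2\pi i \langle \mathbf m, \phi(u)\rangle}\,du = \frac{1}{T}\int_0^T e^{2\pi i (\sum_{k} m_k\beta_k) u}\,du = \frac{e^{2\pi i (\sum_k m_k \beta_k) T} - 1}{2\pi i (\sum_k m_k\beta_k)\,T},
\]
and this tends to $0$ as $T \to \infty$ \emph{precisely because} $\sum_k m_k \beta_k \neq 0$; that nonvanishing for all $\mathbf m \neq 0$ is exactly the $\mathbb{Q}$-linear independence of the $\beta_k$. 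By Weyl's criterion the flow $\phi$ is equidistributed in $\mathbb{T}^N$ with respect to Haar measure. Equivalently, and perhaps more transparently, one may argue by Pontryagin duality: the orbit closure is a closed subgroup of $\mathbb{T}^N$ whose annihilator in $\mathbb{Z}^N$ is $\{\mathbf m : \sum_k m_k\beta_k = 0\}$, which is $\{0\}$ by hypothesis, forcing the orbit closure to be all of $\mathbb{T}^N$.

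The one genuine subtlety -- and the step I expect to require the most care -- is the requirement $u > U$: I must approximate the target at \emph{arbitrarily large} times, not merely at some time. Equidistribution disposes of this automatically, since the normalized amount of time that $\phi(u)$ spends in the $\varepsilon$-ball about the target converges to that ball's positive Haar measure, so the set of admissible $u$ has positive asymptotic density and is in particular unbounded above. (If one prefers to avoid equidistribution and argue only from density of the full orbit, the clean route is to observe that the closure of $\{\phi(u) : u > 0\}$ is a closed subsemigroup of the compact group $\mathbb{T}^N$, hence a subgroup, hence equal to the whole orbit closure $\mathbb{T}^N$; translating by $\phi(U)$ then gives density of $\{\phi(u): u>U\}$.)

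It remains only to pass to the exponential inequality, which is routine. Writing $\beta_k u - \theta_k = p_k + \delta_k$ with $|\delta_k| < \varepsilon$, and using $e^{2\pi i p_k} = 1$, I obtain $e^{2\pi i \beta_k u} - e^{2\pi i \theta_k} = e^{2\pi i \theta_k}\bigl(e^{2\pi i \delta_k} - 1\bigr)$, so that $|e^{2\pi i \beta_k u} - e^{2\pi i \theta_k}| = |e^{2\pi i \delta_k} - 1| \le 2\pi|\delta_k| < 2\pi\varepsilon < 4\pi\varepsilon$, where I have used the elementary estimate $|e^{it} - 1| \le |t|$. This yields the second displayed bound and completes the argument.
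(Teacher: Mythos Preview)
Your argument is correct. The paper does not actually prove Kronecker's theorem; it is quoted as a classical result with references to \cite{Ben}, \cite{Har1}, \cite{Kat}, and \cite{Shi}, so there is no ``paper's own proof'' to compare against. Your route via Weyl equidistribution (or the Pontryagin-duality variant) is one of the standard modern proofs and matches what one finds in, e.g., Katznelson. The identification of the annihilator of the orbit closure with $\{\mathbf m\in\mathbb{Z}^N:\sum_k m_k\beta_k=0\}$ is exactly right, and the $\mathbb{Q}$-linear independence hypothesis is precisely what forces this to be trivial. Your handling of the constraint $u>U$ is clean in both versions you offer; the equidistribution argument is the more robust one, while the closed-subsemigroup-of-a-compact-group argument is also valid (a closed subsemigroup of a compact group is a subgroup, hence contains the full two-sided orbit and thus its closure $\mathbb{T}^N$). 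The final exponential estimate is routine and in fact yields $2\pi\varepsilon$, stronger than the stated $4\pi\varepsilon$.
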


\begin{theorem}Let $g \in L^2(\mathbb{R})$ and assume that $g$ is ultimately positive.
Let $\Lambda = \{(\alpha_k, \beta_k)\}_{k=0}^N \subseteq \mathbb{R}^2$ have the property that $\{\beta_0,\ldots,\beta_N\}$
is linearly independent over $\mathbb{Q}$. The HRT conjecture holds for $\mathcal{G}(g,\Lambda)$.
\end{theorem}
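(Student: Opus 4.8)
The plan is to argue by contradiction, converting a hypothetical linear dependence into a scalar identity that is ruled out by combining Kronecker's theorem with the positivity of $g$. Suppose $\mathcal{G}(g,\Lambda)$ is linearly dependent, so that $\sum_{k=0}^{N} c_k e^{2\pi i \beta_k x} g(x-\alpha_k) = 0$ for almost every $x$, with the $c_k$ not all zero. Discarding the vanishing coefficients and relabelling, I may assume every $c_k \ne 0$; the surviving frequencies remain a subset of a $\mathbb{Q}$-linearly independent set and are therefore still $\mathbb{Q}$-independent (in particular distinct). Fixing $A$ with $g(x)>0$ for $x>A$, there is $A_1$ such that for almost every $x>A_1$ the relation holds and, simultaneously, $g(x-\alpha_k)>0$ for every $k$; let $R$ denote this full-measure set of ``good'' points. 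Note that $g \not\equiv 0$ guarantees the relation is genuinely nontrivial.

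The engine of the proof is to let $x\to\infty$ through $R$ along a sequence on which all the exponentials $e^{2\pi i \beta_k x}$ are driven to prescribed unimodular targets. I set $\Omega_k = \overline{c_k}/|c_k|$, so that $c_k \Omega_k = |c_k| > 0$. Because $\{\beta_0,\dots,\beta_N\}$ is $\mathbb{Q}$-independent, Theorem~\ref{thm:kronecker} supplies, for every $U$ and every $\varepsilon$, some $u>U$ with $|e^{2\pi i \beta_k u}-\Omega_k|<\varepsilon$ for all $k$. Hence the set $O_\varepsilon=\{\,v : |e^{2\pi i \beta_k v}-\Omega_k|<\varepsilon\ \text{for all }k\,\}$ is open, nonempty and unbounded, so $O_{1/n}\cap(n,\infty)$ is a nonempty open set of positive measure and must meet the full-measure set $R$. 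Choosing $v_n$ in this intersection yields a sequence $v_n\to\infty$ at which the dependence relation holds, $g(v_n-\alpha_k)>0$, and $e^{2\pi i \beta_k v_n}\to\Omega_k$ for every $k$; this reconciliation of Kronecker's \emph{existence} statement with the almost-everywhere relation is in the spirit of Lemma 3.8.

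The decisive step is a normalization that tames the a priori uncontrolled sizes of the values $g(v_n-\alpha_k)$. Putting $G_n=\max_{0\le k\le N} g(v_n-\alpha_k)>0$ and dividing the relation at $v_n$ by $G_n$, I obtain $\sum_{k=0}^{N} c_k e^{2\pi i \beta_k v_n}\tau_{k,n}=0$, where $\tau_{k,n}=g(v_n-\alpha_k)/G_n\in[0,1]$ and $\max_k \tau_{k,n}=1$. Passing to a subsequence along which one fixed index $k^{*}$ realizes the maximum and along which $\tau_{k,n}\to t_k\in[0,1]$, I get $t_{k^{*}}=1$; letting $n\to\infty$ in the finite sum gives $\sum_{k=0}^{N} c_k \Omega_k t_k = 0$, that is, $\sum_{k=0}^{N} |c_k|\,t_k = 0$. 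Since every summand is nonnegative and $|c_{k^{*}}|\,t_{k^{*}}=|c_{k^{*}}|>0$, this is impossible, and the contradiction proves the theorem.

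I expect the main obstacle to be precisely this last normalization. Unlike the exponential-decay hypothesis of the Section~4 theorem or the ratio-limit hypothesis of Theorem~3.9, here $g$ is merely square-integrable and ultimately positive, so the quotients $g(x-\alpha_k)/g(x-\alpha_j)$ need neither converge nor remain bounded as $x\to\infty$. Dividing by the running maximum $G_n$ is what converts these into bounded, nonnegative weights, and it is the \emph{ultimate positivity} of $g$ that keeps those weights nonnegative, so that Kronecker's phase alignment (forcing $c_k\Omega_k=|c_k|$) turns a vanishing sum into a sum of nonnegative terms containing a strictly positive one. Thus positivity plays exactly the role that decay or ratio-limits played in the earlier theorems, and the $\mathbb{Q}$-independence of the $\beta_k$ is used solely to prescribe all $N+1$ phases simultaneously through Theorem~\ref{thm:kronecker}.
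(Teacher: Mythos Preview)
Your argument is correct, and it shares the paper's core strategy: exploit Kronecker's theorem to align the phases $e^{2\pi i\beta_k x}$ with prescribed unimodular targets, then use the ultimate positivity of $g$ to reach an impossible scalar identity. The execution, however, differs in two linked places and is arguably cleaner than the paper's.

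The paper first applies a metaplectic reduction to put $(\alpha_0,\beta_0)=(0,0)$, isolates $g(x)$ on one side, and then uses Kronecker on the remaining $N$ frequencies to drive $c_k e^{2\pi i\beta_k u_n}\to |c_k|\,i$. Dividing by the distinguished value $g(u_n)$, the ratios $g(u_n-\alpha_k)/g(u_n)$ are not a priori bounded, so the paper inserts a reverse-triangle estimate (with $\varepsilon=\tfrac12$ in Kronecker) to force $\sum_k |c_k|\,g(u_n-\alpha_k)/g(u_n)\le 2$, and only then extracts a convergent subsequence, arriving at the contradiction $1=\sum_k |c_k|\,r_k\,i$. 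Your version keeps the sum symmetric, uses the full $\mathbb{Q}$-independence of all $N{+}1$ frequencies to send $c_k e^{2\pi i\beta_k v_n}\to |c_k|$ (positive real), and normalizes by the running maximum $G_n=\max_k g(v_n-\alpha_k)$. This makes boundedness automatic, eliminates the intermediate inequality, and yields the contradiction $0=\sum_k |c_k|\,t_k$ with a strictly positive summand. The trade-off is that you must control one extra phase via Kronecker, which the hypothesis permits; in return you avoid the paper's boundedness step entirely. Your use of the openness of the Kronecker set to intersect with the full-measure ``good'' set is also a tidy replacement for the paper's appeal to Lemma~3.8.
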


\begin{proof}
If $\{\beta_0, . . ., \beta_N\}$  is linearly independent over $\mathbb{Q}$, then $\{\beta_1-\beta_0, . . ., \beta_N-\beta_0\}$ is also linearly independent over $\mathbb{Q}$. Using Proposition 1.3, we assume that $(\alpha_0, \beta_0) = (0,0)$, and so $\{\beta_1, . . ., \beta_N\}$ is linearly independent over $\mathbb{Q}$. Assuming that $\mathcal{G}(g, \Lambda)$ is linearly dependent in $L^2(\mathbb{R})$, we shall obtain a contradiction.

{\it i.} The linear dependence of $\mathcal{G}(g,\Lambda)$ implies, without loss of generality, that there are $c_1,\ldots, c_N \in \mathbb{C} \setminus \{0\}$ such that
\begin{eqnarray}
  g(x) &=& \sum_{k=1}^N  c_k e^{2 \pi i \beta_kx}g(x-\alpha_k) \quad \text{a.e.}
\end{eqnarray}

{\it ii.} By Kronecker's  theorem (Theorem \ref{thm:kronecker}) and the linear independence of $\{\beta_1, \ldots, \beta_N\} \subseteq \mathbb{R}$ over $\mathbb{Q}$, there exists a sequence $\{u_n\} \subseteq \mathbb{R}$ such that $\lim_{n \rightarrow \infty} u_n = \infty$, and
\begin{equation}
\forall k = 1, \ldots, N, \quad \lim_{n \rightarrow \infty} e^{2 \pi i \beta_k u_n} = e^{2 \pi i \theta_k},
\end{equation}
where each
$$\theta_k = \phi_k + 1/4 \quad \text{and} \quad c_k = |c_k|e^{-2 \pi i \phi_k},$$
i.e., we have chosen $\theta_k$ in our application of Theorem 5.1 to be defined by the formula, $e^{2 \pi i \theta_k} = |c_k|i/c_k$. Therefore, from (5.2), we compute
\begin{equation*}
\forall k = 1, \ldots, N, \quad \lim_{n \rightarrow \infty} c_k e^{2 \pi i \beta_k u_n} = |c_k|i.
\end{equation*}

By Lemma 3.8, there is a set $X \subseteq \mathbb{R}$, $|\mathbb{R} \setminus X| = 0$, such that
\begin{equation}
 \forall (n,x) \in \mathbb{N} \times X, \quad g(x+u_n) = \sum_{k=1}^N  c_k e^{2 \pi i \beta_k(x+n)}g(x+u_n-\alpha_k).
\end{equation}

{\it iii.} For the sake of simplicity, we assume that $0 \in X$. Since $g$ is ultimately positive and $u_n \rightarrow \infty$ we can assume that
$$\forall n \text{ and } \forall \, k = 0, \ldots, N, \quad g(u_n - \alpha_k) > 0.$$
Because of the positivity, we use (5.3) with $x=0$  to write
\begin{align}
1 &= \sum_{k = 1}^N \left( |c_k| i + \left(c_k e^{2 \pi i \beta_k u_n} - |c_k|i \right) \right) \frac{g(u_n - \alpha_k)}{g(u_n)} \\
& \geq \left| \sum_{k=1}^N |c_k| i \frac{g(u_n - \alpha_k)}{g(u_n)} \right| - \left| \sum_{k=1}^N \left(c_k e^{2 \pi i \beta_k u_n} - |c_k|i \right) \frac{g(u_n - \alpha_k)}{g(u_n)}\right| \nonumber \\
& \geq \sum_{k=1}^N |c_k| \frac{g(u_n - \alpha_k)}{g(u_n)} - \sum_{k=1}^N |c_k| \left| e^{2 \pi i \beta_k u_n}- \frac{|c_k|}{c_k}i\right|\frac{g(u_n- \alpha_k)}{g(u_n)}, \nonumber
\end{align}
since $|cd - |c|i| = |c||d - |c|i/c|$ for $c \in \mathbb{C}\setminus \{0\}$ and $d \in \mathbb{C}$.

Let $\varepsilon = 1/2$ in Theorem \ref{thm:kronecker}. Then, we have that
$$\exists \, U > 0 \text{ such that } \forall \, u_n > U \text{ and } \forall \, k = 1, \ldots, N, $$
$$\left| e^{2 \pi i \beta_k u_n} - \frac{|c_k|}{c_k}i\right| < \frac{1}{2}.$$
Consequently, (5.4) allows us to assert that
\begin{equation*}
\forall \, u_n > U, \quad 2 \geq \sum_{k = 1}^N |c_k| \frac{g(u_n - \alpha_k)}{g(u_n)} ;
\end{equation*}
and, hence, the sequence
$$\left\{ \frac{g(u_n- \alpha_k)}{g(u_n)}\right\}$$
is bounded for each $k = 1, \ldots, N$. Therefore, there is a subsequence $\{v_n\}$ of $\{u_n\}$ for which $\{g(v_n - \alpha_k)/g(v_n)\}$ converges to some $r_k \in \mathbb{R}$ for each $k = 1, \ldots, N$. Consequently, by invoking Theorem \ref{thm:kronecker} again, and replacing $u_n$ by $v_n$, the equality in (5.4) leads to
$$1 = \sum_{k=1}^N |c_k| r_k i, $$
the desired contradiction.
\end{proof}
\begin{lemma} Let $g \in L^2(\mathbb{R})$ have the properties that $g(x)$ and $g(-x)$ are ultimately  positive and ultimately decreasing.
Define
 \begin{eqnarray*}
   \Delta_{jk}(x,y) &=&  g(x+y+\alpha_j)g(x-y+\alpha_k)- g(x-y+\alpha_j)g(x+y+\alpha_k),
\end{eqnarray*}
where $x, y, \alpha_j,\alpha_k \in \mathbb{R}$. Assume that $\alpha_j < \alpha_k$ and let $x \in \mathbb{R}$.
\begin{enumerate}[(a)]
  \item If $y$ is large enough, then $\Delta_{jk}(x,y) \geq 0$.
  \item If  $\Delta_{jk}(x,y) = 0$ and $y$ is large enough, then $g(x+y+\alpha_j)= g(x+y+\alpha_k)$ and $g(x-y+\alpha_j)= g(x-y+\alpha_k)$.
\end{enumerate}
\end{lemma}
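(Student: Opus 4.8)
The plan is to treat this as a pure monotonicity-and-positivity argument, with the role of ``$y$ large'' being simply to push the four arguments of $g$ into the two regions where the hypotheses apply. Fix $x \in \mathbb{R}$ and abbreviate $u = x+y$ and $v = x-y$, so that for this fixed $x$ we have $u \rightarrow +\infty$ and $v \rightarrow -\infty$ as $y \rightarrow \infty$. In this notation
$$\Delta_{jk}(x,y) = g(u+\alpha_j)g(v+\alpha_k) - g(v+\alpha_j)g(u+\alpha_k).$$

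First I would record what each hypothesis yields on its own side. Because $g$ is ultimately positive and ultimately decreasing, there is a threshold beyond which $g$ is positive and decreasing; choosing $y$ large enough that $u+\alpha_j$ (the smaller of $u+\alpha_j, u+\alpha_k$, since $\alpha_j<\alpha_k$) exceeds this threshold gives $g(u+\alpha_j) \geq g(u+\alpha_k) > 0$. Because $g(-x)$ is ultimately positive and ultimately decreasing, $g$ is ultimately positive and \emph{increasing} on a left half-line; choosing $y$ large enough that $v+\alpha_k$ (the larger of $v+\alpha_j, v+\alpha_k$) lies past the corresponding left-hand threshold gives $0 < g(v+\alpha_j) \leq g(v+\alpha_k)$. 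All four values are then strictly positive.

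Part (a) then follows by chaining two one-variable comparisons: multiplying $g(u+\alpha_j) \geq g(u+\alpha_k)$ by $g(v+\alpha_k)>0$, and $g(v+\alpha_k) \geq g(v+\alpha_j)$ by $g(u+\alpha_k)>0$, yields
$$g(u+\alpha_j)g(v+\alpha_k) \geq g(u+\alpha_k)g(v+\alpha_k) \geq g(u+\alpha_k)g(v+\alpha_j),$$
whose two ends say exactly $\Delta_{jk}(x,y) \geq 0$. For part (b), the hypothesis $\Delta_{jk}(x,y)=0$ forces the two ends of this chain to coincide, so both inequalities must be equalities. Cancelling the positive factor $g(v+\alpha_k)$ from the first equality gives $g(u+\alpha_j)=g(u+\alpha_k)$, i.e. $g(x+y+\alpha_j)=g(x+y+\alpha_k)$; cancelling the positive factor $g(u+\alpha_k)$ from the second gives $g(v+\alpha_j)=g(v+\alpha_k)$, i.e. $g(x-y+\alpha_j)=g(x-y+\alpha_k)$, which are the two asserted equalities.

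There is no genuine analytic difficulty here; the only point needing care is the bookkeeping of the ``$y$ large enough'' thresholds. For fixed $x$ and fixed $\alpha_j<\alpha_k$ one must check that a single lower bound on $y$ simultaneously pushes $u+\alpha_j$ past the right-hand threshold and $v+\alpha_k$ past the left-hand threshold; since $u+\alpha_j \rightarrow +\infty$ and $v+\alpha_k \rightarrow -\infty$ monotonically in $y$, taking the larger of the two required bounds suffices. The essential structural input is the \emph{direction} of monotonicity on each side: decreasing on the right and increasing on the left are precisely what make the two middle comparisons point the same way, so that the cross terms in $\Delta_{jk}$ align to produce a nonnegative difference.
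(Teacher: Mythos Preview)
Your argument is correct. The paper actually states this lemma without proof, presumably regarding it as elementary; your monotonicity-and-positivity chain is exactly the natural verification and is what the authors evidently had in mind.
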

\begin{lemma} Let $(\beta_1, \beta_2, \beta_3) \in \mathbb{R} \setminus \{0\}$, let $c_1, c_2, c_3 \in \mathbb{C} \setminus \{0\}$, and let $E,F \subseteq \mathbb{R}$ have the properties that $|E|,|F|>0$. If
\begin{eqnarray*}
\forall x \in E,&  &c_1 e^{2 \pi i\beta_1x}  +c_2e^{2 \pi i\beta_2x} + c_3e^{2 \pi i\beta_3x} \in \mathbb{R}
\end{eqnarray*}
and
\begin{eqnarray*}
\forall x \in F,&  &\frac{c_1}{c_3} e^{2 \pi i(\beta_1-\beta_3)x}+\frac{c_2}{c_3} e^{2 \pi i(\beta_2-\beta_3)x} -\frac{1}{c_3} e^{-2 \pi i\beta_3x} \in \mathbb{R},
\end{eqnarray*}
then one of the following statements is satisfied.
\begin{enumerate}[(a)]
  \item $\beta_3=0$ and $\beta_1= \beta_2 \neq 0$;  and, in this case, we have $c_3 \in \mathbb{R}$ and $c_1 +c_2=0$. 
  \item $\beta_3=0$ and $\beta_2= -\beta_1 \neq 0$;  and, in this case, we have $c_3 \in \mathbb{R}$ and $c_2 =\overline{c_1}$. 
\end{enumerate}
\end{lemma}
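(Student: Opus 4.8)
The plan is to interpret each hypothesis as the assertion that a trigonometric sum is real on a set of positive measure, and then to read off frequency information using Proposition~\ref{trigo}. Since a complex number is real precisely when it equals its own conjugate, the first hypothesis says that
$$\sum_{k=1}^{3} c_k e^{2\pi i \beta_k x} - \sum_{k=1}^{3}\overline{c_k}\, e^{-2\pi i \beta_k x} = 0 \qquad \text{for } x \in E ,$$
and the second says that an analogous sum vanishes on $F$, this one having frequencies $\pm(\beta_1-\beta_3)$, $\pm(\beta_2-\beta_3)$, $\pm\beta_3$ and carrying among its coefficients the distinguished nonzero number $-1/c_3$ at frequency $-\beta_3$. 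In each case I would group the terms sharing a common frequency and apply Proposition~\ref{trigo}: as $|E|,|F|>0$, every coefficient obtained after this grouping must be $0$. Because $c_1,c_2,c_3$ are all nonzero, this is possible only if certain frequencies coincide, and the whole argument reduces to determining which coincidences are admissible.

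First I would use the second hypothesis to force $\beta_3=0$. The term $-\tfrac{1}{c_3}e^{-2\pi i \beta_3 x}$ puts a nonzero coefficient at frequency $-\beta_3$, so $-\beta_3$ cannot remain unmatched; some other frequency in that sum must equal $-\beta_3$. Comparing $-\beta_3$ with the six frequencies and using that $\beta_1$ and $\beta_2$ are nonzero, the only options are $\beta_3=0$, $\beta_1=2\beta_3$, or $\beta_2=2\beta_3$. I would eliminate the last two with the first hypothesis: if $\beta_3\neq 0$ and $\beta_1=2\beta_3$, then a brief split according to whether $\beta_2$ equals $\beta_3$, equals $-\beta_3$, or equals neither shows that one of the frequencies $\beta_3$ or $2\beta_3$ has no partner in the first sum, so its coefficient ($c_3$ or $c_1$, respectively) would be forced to vanish, contradicting $c_k\neq 0$. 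The case $\beta_2=2\beta_3$ is identical after exchanging indices, so $\beta_3=0$.

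Once $\beta_3=0$, the first hypothesis reduces to the statement that $c_1 e^{2\pi i \beta_1 x}+c_2 e^{2\pi i \beta_2 x}+c_3$ is real on $E$. Grouping the zero frequency gives the coefficient $c_3-\overline{c_3}$, whose vanishing yields $c_3\in\mathbb{R}$. Since $\beta_1\neq 0$, the frequency $\beta_1$ cannot be matched by the zero frequency, so it must be matched either by $\beta_2=\beta_1$ or by $\beta_2=-\beta_1$; the vanishing of the associated grouped coefficient then gives $c_1+c_2=0$ in the first case and $c_2=\overline{c_1}$ in the second. These are exactly conclusions (a) and (b), and the remaining possibility $\beta_2\neq\pm\beta_1$ is excluded because it would leave $c_1\neq 0$ standing alone at frequency $\beta_1$.

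The step I expect to be most delicate is the frequency bookkeeping used to pass from $\beta_3=0$, $\beta_1=2\beta_3$, or $\beta_2=2\beta_3$ down to $\beta_3=0$: one must list all six frequencies of each conjugate difference, track every way two of them can merge (including the degenerate merges occurring when two of the $\beta_k$ are equal or opposite), and confirm that no configuration beyond (a) and (b) survives both hypotheses. The reason the argument ultimately closes is structural: the coefficient $-1/c_3$ in the second hypothesis can never be zero, so it is exactly this term that pins down $\beta_3$, after which Proposition~\ref{trigo} handles the rest by routine matching of exponentials.
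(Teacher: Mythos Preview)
The paper states this lemma without proof, so there is nothing to compare your argument against directly. Your method---rewriting each realness condition as ``expression minus its conjugate equals zero'' and then invoking Proposition~\ref{trigo} to force coefficient-by-coefficient cancellation after grouping repeated frequencies---is exactly the natural route and is almost certainly what the authors intended; your final reduction once $\beta_3=0$ is also carried out correctly.

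One point merits attention. You appeal to ``$\beta_1$ and $\beta_2$ are nonzero'' when matching the frequency $-\beta_3$ in the second hypothesis, but this is not obviously part of the assumption $(\beta_1,\beta_2,\beta_3)\in\mathbb{R}\setminus\{0\}$, whose intended reading is ambiguous (most plausibly that the triple is not identically zero). The cases $\beta_1=0$ or $\beta_2=0$ are not automatically excluded, and they are not vacuous: for instance, $\beta_1=0$, $\beta_2=\beta_3\neq 0$, $c_1=1$, $c_2=-c_3$ satisfies both realness conditions identically while matching neither conclusion (a) nor (b). So either the lemma tacitly carries an extra nonvanishing or distinctness hypothesis inherited from its application in Lemma~5.5, or these degenerate configurations must be disposed of separately in your bookkeeping. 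You rightly flag the frequency matching as the delicate step; this is precisely where that care is needed.
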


\begin{lemma} Let $g \in L^2(\mathbb{R})$ , let $(\beta_1, \beta_2, \beta_3) \in \mathbb{R} \setminus \{0\}$,  and  let $0<\alpha_1< \alpha_2<\alpha_3 $. If there is $a \in \mathbb{R}$ for which we have $g$  positive on $  [a-\alpha_1, a +2\alpha_3-\alpha_2]$ and constant on $[a, a+\alpha_3]$,   then the  HRT conjecture holds for $\mathcal{G}(g, \{(-\alpha_k,\beta_k)\}_{k=0}^3)$, where $(\alpha_0,\beta_0)=(0,0)$.
\end{lemma}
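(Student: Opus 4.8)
The plan is to assume the conjecture fails and to extract, from the dependence relation together with the constancy of $g$, two ``realness'' constraints on trigonometric sums that feed directly into the previous lemma. First I would suppose $\mathcal{G}(g,\{(-\alpha_k,\beta_k)\}_{k=0}^3)$ is linearly dependent, so that a nontrivial relation $\sum_{k=0}^3 c_k e^{2\pi i\beta_k x}g(x+\alpha_k)=0$ holds almost everywhere. If any coefficient vanishes the system collapses to at most three elements, for which the conjecture is known (see Section 1); hence I may assume all $c_k\neq 0$ and, after normalizing the coefficient of the $(\alpha_0,\beta_0)=(0,0)$ term, rewrite the relation as
\begin{eqnarray*}
g(x)=\sum_{k=1}^{3} c_k\, e^{2\pi i\beta_k x}\, g(x+\alpha_k)\quad\mbox{a.e.},
\end{eqnarray*}
where $c_1,c_2,c_3\in\mathbb{C}\setminus\{0\}$.

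Let $C$ be the constant value of $g$ on $[a,a+\alpha_3]$; since $g$ is positive on the larger interval $[a-\alpha_1,a+2\alpha_3-\alpha_2]\supseteq[a,a+\alpha_3]$, we have $C>0$. The idea is to locate two subintervals of positive length on which exactly three of the four arguments $x,x+\alpha_1,x+\alpha_2,x+\alpha_3$ fall inside the constant window $[a,a+\alpha_3]$ while the fourth stays inside the positivity window. On $I_2=[a-\alpha_1,a]$ the shifts $x+\alpha_1,x+\alpha_2,x+\alpha_3$ all lie in $[a,a+\alpha_3]$, while $x$ lies in the positivity window, so $g(x+\alpha_k)=C$ for $k=1,2,3$ and $g(x)>0$, and the relation reduces to
\begin{eqnarray*}
c_1 e^{2\pi i\beta_1 x}+c_2 e^{2\pi i\beta_2 x}+c_3 e^{2\pi i\beta_3 x}=\frac{g(x)}{C}\in\mathbb{R}
\end{eqnarray*}
for almost every $x\in I_2$. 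On $I_1=[a,a+\alpha_3-\alpha_2]$ instead $x,x+\alpha_1,x+\alpha_2$ lie in the constant window, whereas $x+\alpha_3$ reaches $[a+\alpha_3,a+2\alpha_3-\alpha_2]$, the right end of the positivity window; dividing by $c_3 e^{2\pi i\beta_3 x}$ and using $g(x+\alpha_3)/C>0$ yields
\begin{eqnarray*}
\frac{c_1}{c_3}e^{2\pi i(\beta_1-\beta_3)x}+\frac{c_2}{c_3}e^{2\pi i(\beta_2-\beta_3)x}-\frac{1}{c_3}e^{-2\pi i\beta_3 x}\in\mathbb{R}
\end{eqnarray*}
for almost every $x\in I_1$.

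These two statements are exactly the hypotheses of the previous lemma, taking $E$ and $F$ to be the full-measure subsets of $I_2$ and $I_1$ on which the relation holds. Each of that lemma's two alternatives forces $\beta_3=0$, which contradicts the standing assumption $\beta_3\neq 0$; hence the dependence relation cannot exist and the HRT conjecture holds for $\mathcal{G}(g,\Lambda)$. I expect the main obstacle to be entirely the geometric bookkeeping of the middle paragraph: the precise shape of the hypothesized positivity interval $[a-\alpha_1,a+2\alpha_3-\alpha_2]$ is dictated by the requirement that the three largest shifts land in the constant window on $I_2$, and that the largest shift $x+\alpha_3$ remains positive on $I_1$. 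Verifying these inclusions (which repeatedly use $0<\alpha_1<\alpha_2<\alpha_3$) and tracking the signs so that the $I_1$ identity matches the previous lemma verbatim is the only delicate point; everything else is routine.
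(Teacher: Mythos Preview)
Your derivation of the two realness constraints is correct and matches the paper's argument exactly: the paper uses $E=[a-\alpha_1,a]$ (your $I_2$) and $F=[a+\alpha_3,a+2\alpha_3-\alpha_2]$ (the image of your $I_1$ under $x\mapsto x+\alpha_3$), obtaining the same two inputs to the previous lemma. The geometric bookkeeping you flag as the delicate part is fine.

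The gap is in your final sentence. You read the hypothesis ``$(\beta_1,\beta_2,\beta_3)\in\mathbb{R}\setminus\{0\}$'' as ``each $\beta_k\neq 0$'', and conclude that $\beta_3=0$ is an immediate contradiction. But the paper's (admittedly sloppy) notation means $(\beta_1,\beta_2,\beta_3)\in\mathbb{R}^3\setminus\{(0,0,0)\}$: the vector is nonzero, not each coordinate. This is clear from how the lemma is invoked later (in part \emph{viii.c.4} of Theorem~5.6 only $\beta_1\neq 0$ is assumed), and from the fact that the paper's own proof of this lemma does \emph{not} stop after applying Lemma~5.4 but goes on, in part~\emph{ii}, to dispose of each of the two surviving cases $\beta_3=0,\ \beta_1=\beta_2\neq 0,\ c_1+c_2=0,\ c_3\in\mathbb{R}$ and $\beta_3=0,\ \beta_2=-\beta_1\neq 0,\ c_2=\overline{c_1},\ c_3\in\mathbb{R}$. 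In the first case, for instance, (5.5) becomes
\[
g(x)=c_1e^{2\pi i\beta_1 x}\bigl(g(x+\alpha_1)-g(x+\alpha_2)\bigr)+c_3\,g(x+\alpha_3),
\]
forcing $\{x:g(x+\alpha_1)\neq g(x+\alpha_2)\}\subseteq\{x:c_1e^{2\pi i\beta_1 x}\in\mathbb{R}\}$; but $g\in L^2(\mathbb{R})$ makes the left-hand set of positive measure while $\beta_1\neq 0$ makes the right-hand set null. So these cases require genuine additional work that your argument omits.
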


\begin{proof} {\it i.} Suppose that $\mathcal{G}(g, \{(-\alpha_k,\beta_k)\}_{k=0}^3)$ is a linearly dependent set of functions. Recall (Section 1) that the HRT conjecture holds for any three point set. We use this fact two times, combined with the assumption of linear dependence and a straightforward calculation to show that  there are $c_1, c_2, c_3 \in \mathbb{C} \setminus \{0\}$ such that
\begin{eqnarray}
 g(x)=  \sum_{k=1}^3 c_ke^{2 \pi i \beta_kx}g(x+\alpha_k) \quad \text{a.e.}
\end{eqnarray}
Therefore, the fact that g is constant on $[a, a+\alpha_3]$ implies that
\begin{eqnarray*}
\forall x \in E, \quad   g(x)&=&[c_1e^{2 \pi i\beta_1x} +c_2e^{2 \pi i\beta_2x}+c_3e^{2 \pi i\beta_3x}]g(a)
\end{eqnarray*}
and
\begin{eqnarray*}
\forall x \in F, \quad   g(x)&=& [-\frac{c_1}{c_3} e^{2 \pi i(\beta_1-\beta_3)x}-\frac{c_2}{c_3} e^{2 \pi i(\beta_2-\beta_3)x} + \frac{1}{c_3} e^{-2 \pi i\beta_3x}]g(a),
\end{eqnarray*}
where $  E = [a-\alpha_1, a]$ and $  F = [a +\alpha_3, a +2\alpha_3-\alpha_2]$. Hence,  the fact that $g$ is positive on $  [a-\alpha_1, a +2\alpha_3-\alpha_2]$  implies that
\begin{eqnarray*}
\forall x \in E,&  &c_1 e^{2 \pi i\beta_1x}  +c_2e^{2 \pi i\beta_2x} + c_3e^{2 \pi i\beta_3x} \in \mathbb{R}
\end{eqnarray*}
and
\begin{eqnarray*}
\forall x \in F,&  &\frac{c_1}{c_3} e^{2 \pi i(\beta_1-\beta_3)x}+\frac{c_2}{c_3} e^{2 \pi i(\beta_2-\beta_3)x} -\frac{1}{c_3} e^{-2 \pi i\beta_3x} \in \mathbb{R}.
\end{eqnarray*}
Consequently, Lemma 5.4 lists all the possible cases relating $\beta_1, \beta_2,$ and $\beta_3$. In part \emph{ii},  we shall see that each one of these cases leads to a contradiction.

{\it ii.}  Assume that $\beta_3=0$ and $\beta_1= \beta_2 \neq 0$. In this case, we have $c_3 \in \mathbb{R}$ and $c_1 +c_2=0$. Thus, (5.5) is
\begin{eqnarray*}
 g(x)=  c_1e^{2 \pi i \beta_1x}[g(x+\alpha_1)-g(x+\alpha_2)]+c_3g(x+\alpha_3),
\end{eqnarray*}
and so $ \{ x: g(x+\alpha_1) \neq g(x+\alpha_2) \}  \subseteq \{ x: c_2e^{2 \pi i \beta_1x} \in \mathbb{R}  \}$. Meanwhile, the fact that
$g \in L^2(\mathbb{R})$ implies that $|\{ x: g(x) \neq g(x+\alpha_1) \}| \neq 0$. Therefore, we obtain  the contradiction $|\{ x: c_2e^{2 \pi i \beta_2x} \in \mathbb{R}  \}|\neq 0$.

Similarly, we obtain the desired contradiction for the case where $\beta_3=0$ and $\beta_2= -\beta_1 \neq 0$.


\end{proof}

\begin{theorem}Let $g \in L^2(\mathbb{R})$ have the properties that $g(x)$ and $g(-x)$ are ultimately  positive and ultimately decreasing, and let $\Lambda = \{(\alpha_k,\beta_k)\}_{k=0}^3 \subseteq \mathbb{R}^2$. The HRT conjecture holds for $\mathcal{G}(g,\Lambda)$.
\end{theorem}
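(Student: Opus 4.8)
The plan is to argue by contradiction, reducing the four–point problem to the auxiliary results of this section. Suppose $\mathcal{G}(g,\Lambda)$ is linearly dependent. Since the HRT conjecture is known for any three–point system (Section~1), I may assume that all four points carry nonzero coefficients. Applying a time–frequency shift $\pi(-\lambda_0)=M_{-\beta_0}T_{-\alpha_0}$ — a unitary operator that leaves $g$ unchanged and merely translates $\Lambda$ in the plane, since $\pi(-\lambda_0)\pi(\lambda_k)$ equals a phase times $\pi(\lambda_k-\lambda_0)$ — I normalize the chosen base point to $(0,0)$ and write the dependence as $g(x)=\sum_{k=1}^{3}c_k e^{2\pi i\beta_k x}g(x+\alpha_k)$ a.e.\ with $c_k\neq 0$, i.e.\ $\Lambda=\{(-\alpha_k,\beta_k)\}_{k=0}^{3}$ as in Lemma~5.5. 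Because this normalization keeps $g$ \emph{fixed}, the two–sided hypotheses (both $g(x)$ and $g(-x)$ ultimately positive and ultimately decreasing) survive. If $g$ is half–line supported the claim is already known, so I assume $g$ is ultimately nonzero on both tails. Choosing as base point the one with the smallest first coordinate forces $0<\alpha_1<\alpha_2<\alpha_3$ whenever the first coordinates are distinct; repeated first or second coordinates instead place $\Lambda$ into a translate of a full–rank lattice or a degenerate position handled by Linnell's theorem and the $N\le 3$ results of Section~1.

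Next I split on the frequencies $\beta_1,\beta_2,\beta_3$ (the differences $\beta_k-\beta_0$). If they are linearly independent over $\mathbb{Q}$, then Theorem~5.2 applies verbatim and contradicts dependence. This leaves the genuinely hard regime: the $\beta_k$ are rationally dependent while $\Lambda$ sits in general position with $0<\alpha_1<\alpha_2<\alpha_3$ and $\beta_k\neq 0$. Here I would follow the Kronecker strategy behind Theorem~5.2, but with an essential new input. Rational relations among the $\beta_k$ mean that the closure of $\{(\beta_1u,\beta_2u,\beta_3u)\bmod 1:u>0\}$ is only a \emph{proper} subtorus, so I cannot simultaneously drive every $c_k e^{2\pi i\beta_k u_n}$ to $|c_k|i$ as in the proof of Theorem~5.2. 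The key compensating observation is that $g$ is real and positive on \emph{both} tails, so the right–hand side $\sum_{k}c_k e^{2\pi i\beta_k x}g(x+\alpha_k)$ is real (and positive) for $x$ near $+\infty$ and near $-\infty$; in particular its imaginary part vanishes on two sets of positive (indeed infinite) measure.

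To convert these two real–valuedness constraints into the clean trigonometric hypotheses of Lemma~5.4, I would control the weights $g(x+\alpha_k)$ via Lemma~5.3. Dividing the relation by the dominant weight on each tail — $g(x+\alpha_3)$ on the right, $g(x+\alpha_1)$ on the left — the cross–differences $\Delta_{jk}$ show that the ratios $g(x+\alpha_j)/g(x+\alpha_k)$ are monotone and bounded on the respective tails (decreasing on the right, increasing on the left), so along a suitable sequence $x\pm x_n$ they converge. Passing to the limit and using the attainable phase vectors from Kronecker's theorem on the subtorus, the imaginary parts collapse to pure trigonometric polynomials in $x$, which are then forced to be real–valued on intervals $E$ and $F$ of positive measure. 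Lemma~5.4 now forces the frequency configuration into one of its two degenerate types ($\beta_3=0$ with $\beta_1=\beta_2$, or $\beta_3=0$ with $\beta_2=-\beta_1$), together with the rigidity $c_3\in\mathbb{R}$ and $c_1+c_2=0$ (respectively $c_2=\overline{c_1}$).

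Finally, in each degenerate type the relation reads $g(x)=c_1e^{2\pi i\beta_1x}\bigl(g(x+\alpha_1)-g(x+\alpha_2)\bigr)+c_3g(x+\alpha_3)$, and the argument at the end of Lemma~5.5 applies: the inclusion $\{x:g(x+\alpha_1)\neq g(x+\alpha_2)\}\subseteq\{x:c_2e^{2\pi i\beta_1x}\in\mathbb{R}\}$ together with the fact that $g\in L^2(\mathbb{R})$ cannot be a.e.\ constant yields the contradiction $|\{x:c_2e^{2\pi i\beta_1x}\in\mathbb{R}\}|>0$, impossible for a nonreal exponential; the second type is symmetric. This closes the argument. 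The main obstacle, which is Step three, is the passage from the \emph{subtorus–limited} Kronecker information, filtered through the two–sided monotonicity of Lemma~5.3, to the exact real–valuedness hypotheses of Lemma~5.4: one must verify that the weight ratios genuinely converge and that the surviving oscillatory part is a bona fide trigonometric polynomial on a positive–measure set, rather than merely an asymptotic relation. A subsidiary obstacle is the careful disposal of configurations with coincident first or second coordinates or mixed–sign translations, where the reduction to $0<\alpha_1<\alpha_2<\alpha_3$ must instead lean on the lattice and three–point results of Section~1.
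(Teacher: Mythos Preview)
Your outline has the right opening moves (normalization, three--point and Theorem~5.2 reductions, use of both tails), but the core of the argument --- your Step three --- does not go through as written, and the paper's proof is in fact quite different from the reduction to Lemma~5.4/5.5 you propose.

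The main gap is the passage to ``pure trigonometric polynomials'' real on positive--measure sets. From the dependence relation you only know that $\sum_k c_k e^{2\pi i\beta_k x}g(x+\alpha_k)$ is real for large $|x|$, with \emph{nonconstant} positive weights $g(x+\alpha_k)$. Dividing by a dominant weight and sending $x\to\pm\infty$ along a sequence does not yield an identity on a set of positive measure: you have no ratio--limit hypothesis here, so the ratios $g(x+\alpha_j)/g(x+\alpha_k)$ need not converge, and when you shift by $x_n$ and extract subsequential limits you obtain a single numerical constraint (as in Theorem~5.2), not a trigonometric polynomial in a free variable $x$. Lemma~5.4 therefore cannot be invoked in the way you describe. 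In the paper, Lemmas~5.4 and~5.5 are used only in a narrow sub--case (when certain $\Delta_{jk}$ vanish, forcing $g$ to be constant on an interval); the bulk of the proof instead repeats the Kronecker extraction for \emph{many} different phase targets $(\theta_1,\theta_2,\ldots)$ on both tails, compares the resulting numerical identities, and grinds out algebraic constraints (e.g.\ $r_1,r_2\in\mathbb{Z}$, then $r_2$ odd, then $r_2=1$, \ldots) until a contradiction appears. The rank--one rational case is handled separately via the determinants $\Delta_{jk}$ of Lemma~5.3 combined with equations~(5.18)--(5.20).

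A secondary error: your claim that repeated first or second coordinates place $\Lambda$ in a lattice is false. A configuration like $\{(0,0),(0,\beta_1),(\alpha_2,\beta_2),(\alpha_3,\beta_3)\}$ with distinct $\alpha_2,\alpha_3$ and generic $\beta_k$ is not a lattice set; the paper treats the cases $\alpha_1=0$, $\alpha_1=\alpha_2$, $\alpha_2=\alpha_3$ individually within the rank--one frequency analysis.
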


\begin{proof} {\it i.} Suppose that $\mathcal{G}(g,\Lambda)$ is a linearly dependent set of functions, where $\Lambda = \{(-\alpha_k,\beta_k)\}_{k=0}^3 \subseteq \mathbb{R}^2$. Using Proposition 1.3, we assume, without loss of generality, that $(\alpha_0, \beta_0)=(0,0)$ and $0\leq \alpha_1 \leq  \alpha_1 \leq \alpha_3$. Since The HRT conjecture holds for any three point set,  there are $c_1, c_2, c_3 \in \mathbb{C} \setminus \{0\}$
such that
\begin{eqnarray}
 g(x)=  \sum_{k=1}^3 c_ke^{2 \pi i \beta_kx}g(x+\alpha_k) \quad \text{a.e.}
\end{eqnarray}

{\it ii.} Since $g(x)$ and  $g(-x)$ are  positive and decreasing on $(a,\infty)$ for some $a>0$,  then $|\{ x: |x|>a \mbox{ and } g \mbox{ is discontinuous at } x \}| =0$  and the left hand limit $g(x^-)$ exists at each $x$ for which $|x|>a$. Therefore, if $h(x)=g(x^-)$ for $|x|>a$ and $h(x)=g(x)$ elsewhere, then $h=g$ a.e., and so  we obtain
\begin{eqnarray*}
h(x)=  \sum_{k=1}^3 c_ke^{2 \pi i \beta_kx}h(x+\alpha_k) \quad \text{a.e.}
\end{eqnarray*}
Since $h$ is  left hand continuous  on $\{ x: |x|>a  \}$, the last equality holds for all $x$ for which $|x| >a$.

{\it iii.} For each of the remaining steps of our proof, it will suffice to assume that (5.6) holds for $|x|$ as large as we wish. Hence, for the sake of simplicity and without loss of generality, we assume that $g$ is  positive on $\mathbb{R}$, decreasing on $(0,\infty)$, increasing on $(-\infty,0)$, and that (5.6) holds everywhere. In particular, for each $n \in \mathbb{N}$, we have
\begin{equation}
 1= \sum_{k=1}^3 \frac{g(n+\alpha_k)}{g(n)}c_ke^{2 \pi i \beta_kn}
  \end{equation}
  and
  \begin{eqnarray}
  \frac{g(-n)}{g(-n+\alpha_3)} &=& \sum_{k=1}^3 \frac{g(-n+\alpha_k)}{g(-n+\alpha_3)}c_ke^{-2 \pi i \beta_kn}.
  \end{eqnarray}
Using the hypothesis that $g$ is decreasing on $(0,\infty)$ and increasing on $(-\infty,0)$, we have that the sequences,
\begin{eqnarray*}
\{\frac{g(n+\alpha_k)}{g(n)}\}_{n>0}\ \ \ \mbox{ and } \ \ \ \{\frac{g(-n+\alpha_{k-1})}{g(-n+\alpha_3)}\}_{n>0},
\end{eqnarray*}
are bounded for each $k \in \{1,2,3\}$.

With this backdrop, we now use Theorem \ref{thm:kronecker} to construct a sequence\\ $\{u_n\}_{n>0} \subseteq \mathbb{N}$, resp.,  $\{v_n\}_{n>0} \subseteq -\mathbb{N}$, for which the sequence
$\{e^{2 \pi i \beta_ku_n}\}_{n>0}$,  resp.,  $\{e^{2 \pi i\beta_k v_n}\}_{n>0}$, converges to $e^{2 \pi i\theta_k }$, resp.,
$e^{2 \pi i\theta_k' }$,  for each $k \in \{1,2,3\}$. The degree of freedom with which the limits $e^{2 \pi i\theta_k }$ and $e^{2 \pi i\theta_k' }$
are chosen, for each $k \in \{1,2,3\}$, will depend on the properties of the set $\{\beta_1, \beta_2, \beta_3\}$. Next, we extract from the sequence
$\{g(u_n+\alpha_k)/g(u_n)\}_{n>0}$, resp., $\{g(v_n+\alpha_{k-1})/g(v_n+\alpha_3)\}_{n>0}$, a subsequence that converges to some  $l_k \geq 0$, resp.,
$l_{k-1}' \geq 0$, for each $k \in \{1,2,3\}$. The limits $l_k$, resp., $l_{k-1}'$, will depend on the choice of $\theta_k$, resp., $\theta_k'$, for each $k \in \{1,2,3\}$. The properties of $g$ imply that $0\leq l_3 \leq l_2 \leq l_1$ and $0\leq l'_0 \leq l'_1 \leq l'_2 \leq 1$. Further, $l_1 > 0$, resp.,  $l'_2 > 0$, or (5.7), resp., (5.8), leads to a contradiction. Using all of this we obtain the desired contradiction for each of the possible cases relating $\beta_1, \beta_2,$ and $\beta_3$. These cases are dealt with in parts {\it iv--viii}.

Let $d_1,d_2,d_3 \in \mathbb{R}$ have the property that $c_k=|c_k| e^{2 \pi i d_k}$ for each $k \in \{ 1,2,3 \}$.

{\it iv.} If $\{ \beta_1,\beta_2,\beta_3 \}$ is linearly independent over $\mathbb{Q}$, the independence  of $\mathcal{G}(g,\Lambda)$ is a consequence of Theorem 5.2.

{\it v.} Assume that $\{\beta_1,\beta_2\}$ is linearly independent over $\mathbb{Q}$ and
$\beta_3= r_1\beta_1+r_2\beta_2$, where $r_1, r_2 \in \mathbb{Q}$.

Let $(\theta'_1, \theta'_2) \in \mathbb{R}^2$. Using Theorem \ref{thm:kronecker}, we can choose $\{v_n\}$ such that
\begin{eqnarray*}
  \lim_{n \rightarrow \infty} e^{2 \pi i \beta_k v_n} = e^{2 \pi i(\theta'_k-d_k)} \quad \mbox{for each } k\in \{1,2\}.
\end{eqnarray*}
Thus, the limit of (5.8) gives
 \begin{eqnarray}
  l'_0 &=& l'_1|c_1|e^{2 \pi i \theta'_1}+l'_2|c_2|e^{2 \pi i \theta'_2}+|c_3|e^{2 \pi i[r_1(\theta'_1-d_1) + r_2(\theta'_2-d_2) +d_3]},
\end{eqnarray}
where $l'_0, l'_1$ and $l'_2$ are nonnegative real numbers that depend on the choice of $(\theta'_1, \theta'_2)$ and $l'_2 > 0$.

For $(\theta'_1, \theta'_2)=(0,0)$, (5.9) is
\begin{eqnarray*}
  l'_0 &=& l'_1|c_1|+l'_2|c_2|+ |c_3|e^{2 \pi i[-r_1d_1-r_2d_2 +d_3]},
\end{eqnarray*}
and so $e^{2 \pi i[-r_1d_1-r_2d_2 +d_3]}= \epsilon \in \{-1,1\}$. Therefore, for an arbitrary $(\theta'_1, \theta'_2)$, (5.9) can be rewritten as
\begin{eqnarray}
  l'_0 &=& l'_1|c_1|e^{2 \pi i \theta'_1}+l'_2|c_2|e^{2 \pi i \theta'_2} +\epsilon |c_3|e^{2 \pi i [r_1\theta'_1 +r_2\theta'_2)]}.
\end{eqnarray}

For $(\theta'_1, \theta'_2)=(1/2,0)$, (5.10) is
\begin{eqnarray*}
  l'_0 &=& -l'_1|c_1|+l'_2|c_2| +\epsilon |c_3|e^{ \pi i r_1},
\end{eqnarray*}
and so $r_1 \in \mathbb{Z}$. Similarly, we can prove that $r_2 \in \mathbb{Z}$ by taking $(\theta'_1, \theta'_2)=(0,1/2)$.

For $(\theta'_1, \theta'_2)=(0,1/4)$, (5.10) is
\begin{eqnarray}
  l'_0 &=& l'_1|c_1|+l'_2|c_2|i +\epsilon |c_3|i^{r_2}.
\end{eqnarray}
Since $l'_2 > 0$, then $r_2$ must be an odd number and, in particular, $r_2 \neq 0$.

For $(\theta'_1, \theta'_2)=(0,1/r_2)$, (5.10) is
\begin{eqnarray*}
  l'_0 &=& l'_1|c_1|+l'_2|c_2|e^{2 \pi i/r_2} +\epsilon |c_3|.
\end{eqnarray*}
Since $l'_2 > 0$, then $r_2=1$, and so $\epsilon=-1$ by using $(5.11)$. Therefore, for an arbitrary $(\theta'_1, \theta'_2)$, (5.10) can be rewritten as
\begin{eqnarray*}
  l'_0 &=& l'_1|c_1|e^{2 \pi i \theta'_1}+l'_2|c_2|e^{2 \pi i \theta'_2} - |c_3|e^{2 \pi i [r_1\theta'_1 + \theta'_2)]}.
\end{eqnarray*}

For $(\theta'_1, \theta'_2)=(1/4,1/4)$, the last equality is
\begin{eqnarray*}
  l'_0 &=& l'_1|c_1|i+l'_2|c_2|i -|c_3|i^{r_1} i.
\end{eqnarray*}
Since $l'_2 > 0$, then $r_1=4p$ for some integer $p$.

Let $(\theta_1, \theta_2) \in \mathbb{R}^2$. Using Theorem \ref{thm:kronecker} again, we can choose $\{u_n\}$ such that
\begin{eqnarray*}
  \lim_{n \rightarrow \infty} e^{2 \pi i \beta_k u_n} = e^{2 \pi i(\theta_k-d_k)} \quad \mbox{for each } k\in \{1,2\}.
\end{eqnarray*}
Thus, the limit of (5.7) gives
 \begin{eqnarray}
  1 &=& l_1|c_1|e^{2 \pi i \theta_1}+l_2|c_2|e^{2 \pi i \theta_2}-l_3|c_3|e^{2 \pi i[4p\theta_1+\theta_2]},
\end{eqnarray}
where $l_1, l_2$ and $l_3$ are nonnegative real numbers that depend on the choice of $(\theta_1, \theta_2)$ and $l_1>0$.

For $(\theta_1, \theta_2)=(1/4,0)$, (5.12) is
\begin{eqnarray*}
  1 &=& l_1|c_1|i+l_2|c_2|-l_3|c_3|.
\end{eqnarray*}
The fact that $l_1 \neq 0$ and the last equality provide the desired contradiction.

{\it vi.} Assume that $\{ \beta_1,\beta_3\}$ is linearly independent over $\mathbb{Q}$ and $\beta_2= r\beta_1$, where $r \in \mathbb{Q}$.

Let $(\theta'_1, \theta'_3) \in \mathbb{R}^2$. Using Theorem \ref{thm:kronecker} once again and proceeding as in part \emph{iii}, we can choose  $\{v_n\}$ for which  the limit of (5.8) gives
 \begin{eqnarray}
  l'_0 &=& l'_1|c_1|e^{2 \pi i \theta'_1}+l'_2|c_2|e^{2 \pi i [r(\theta'_1-d_1)+d_2]}+|c_3|e^{2 \pi i\theta'_3}.
\end{eqnarray}

For $(\theta'_1, \theta'_3)= (0,0)$, (5.13) is
\begin{eqnarray*}
  l'_0 &=& l'_1|c_1|+l'_2|c_2|e^{2 \pi i [-rd_1+d_2]}+|c_3|.
\end{eqnarray*}
Since $l'_2 \neq 0$, we have $e^{2 \pi i [-rd_1 +d_2]}= \epsilon \in \{-1,1\}$, and so, for an arbitrary
$(\theta'_1, \theta'_3)$, (5.13) becomes
\begin{eqnarray*}
  l'_0 &=& l'_1|c_1|e^{2 \pi i \theta'_1} +\epsilon l'_2|c_2|e^{2 \pi i r\theta'_1}+|c_3|e^{2 \pi i\theta'_3}.
\end{eqnarray*}

For $(\theta'_1, \theta'_3)= (0,1/4)$, the last equality  is
\begin{eqnarray*}
  l'_0 &=& l'_1|c_1| + \epsilon l'_2|c_2|+|c_3|i,
\end{eqnarray*}
and this leads to the contradiction, $c_3=0$.

{\it vii.}  Assume that $\beta_1 = 0$  and $\{ \beta_2,\beta_3\}$ is linearly independent over $\mathbb{Q}$.

Let $(\theta_2, \theta_3),(\theta'_2, \theta'_3) \in \mathbb{R}^2$. By Theorem \ref{thm:kronecker}, we can choose $\{u_n\}$ and $\{v_n\}$ such that
\begin{eqnarray*}
 \lim_{n \rightarrow \infty} e^{2 \pi i \beta_k u_n} = e^{2 \pi i(\theta_k-d_k)} \ \ \mbox{ and }  \ \
 \lim_{n \rightarrow \infty} e^{2 \pi i \beta_k v_n} = e^{2 \pi i(\theta'_k-d_k)}
\end{eqnarray*}
for each  $k\in \{2,3\}$. Thus, equalities (5.7) and (5.8) become
 \begin{eqnarray}
 1 &=& l_1|c_1|e^{2 \pi i d_1}+l_2|c_2|e^{2 \pi i \theta_2}+l_3|c_3|e^{2 \pi i\theta_3},
\end{eqnarray}
where $l_1, l_2$ and $l_3$ are nonnegative real numbers that depend on the choice of $(\theta_1, \theta_2)$, $l_1 >0$, and
\begin{eqnarray}
  l'_0 &=& l'_1|c_1|e^{2 \pi i d_1}+l'_2|c_2|e^{2 \pi i \theta'_2}+|c_3|e^{2 \pi i\theta'_3},
\end{eqnarray}
where $l'_0, l'_1$ and $l'_2$ are nonnegative real numbers that depend on the choice of $(\theta'_1, \theta'_2)$ , and $l'_2 > 0$.

For $(\theta_2, \theta_3)= (0,0)$, (5.14) is
\begin{eqnarray*}
  1 &=& l_1|c_1|e^{2 \pi i d_1}+l_2|c_2|+l_3|c_3|.
\end{eqnarray*}
Since $l_1 > 0$, we have $e^{2 \pi id_1}=\pm 1$, and so, for an arbitrary $(\theta'_2, \theta'_3)$, (5.15) becomes
\begin{eqnarray*}
  l'_0 &=& \pm l'_1|c_1|+l'_2|c_2|e^{2 \pi i \theta'_2}+|c_3|e^{2 \pi i\theta'_3}.
\end{eqnarray*}

For $(\theta'_2, \theta'_3)= (0,1/4)$, the last equality  is
\begin{eqnarray*}
  l'_0 &=& \pm l'_1|c_1|+l'_2|c_2|+|c_3|i,
\end{eqnarray*}
and this leads to the contradiction, $c_3=0$.

{\it viii.}  Assume that  $(\beta_1, \beta_2,\beta_3)=(r_1\beta, r_2\beta,r_3\beta)$, where $\beta \in \mathbb{R}$ and $r_1\, r_2,r_3 \in \mathbb{Q}$.  We use Proposition 1.3 to assume, without loss of generality, that $\beta_1, \beta_2, \beta_3 \in \mathbb{Z}$.

{\it viii.a.} If $\beta_1= \beta_2=\beta_3=0$ or $\alpha_1=\alpha_2=\alpha_3=0$, then the set
$\Lambda = \{(\alpha_k,\beta_k)\}_{k=0}^3$ is a subset of a lattice and the linear independence of $\mathcal{G}(\Lambda,g)$ is a consequence of
known results, see Section 1.

{\it viii.b.} For the remaining subcases, we assume that $(\beta_1, \beta_2,\beta_3) \neq (0,0,0)$ and $\alpha_3>0$. Therefore,
\begin{eqnarray}
\exists n  \in  \mathbb{N}, \quad   |\{x: \Delta_{03}(x,n) >0 \}| \neq 0.
\end{eqnarray}

Indeed, if (5.16) does not hold, then there are $a,b \in \mathbb{R}$ for which we have $b - \alpha_3 <a < b$ and $\Delta_{03}(a,n) = \Delta_{03}(b,n)=0$. Therefore, taking $n$ large enough and using Lemma 5.3, we obtain that
\begin{eqnarray*}
\forall x \in [a, a +\alpha_3], \quad   g(x+n)= g(a+n)
\end{eqnarray*}
and
\begin{eqnarray*}
\forall x \in [b, b +\alpha_3], \quad   g(x+n)= g(b+n).
\end{eqnarray*}
Hence, (5.6) leads to the contradiction
\begin{eqnarray*}
\forall x \in [a,b], \quad   g(a+n) = [c_1e^{2 \pi i\beta_1x} +c_2e^{2 \pi i\beta_2x}+c_3e^{2 \pi i\beta_3x}]g(a+n),
\end{eqnarray*}
since,  by Proposition 1.1, $|\{x:1 = c_1e^{2 \pi i\beta_1x} +c_2e^{2 \pi i\beta_2x}+c_3e^{2 \pi i\beta_3x}\}|=0$.

We shall also invoke Lemma 5.3 and Lemma 5.5; and use the following equalities:
\begin{eqnarray}
\forall  (x,n) \in \mathbb{R} \times  \mathbb{N}, \quad   g(x\pm n) = \sum_{k=1}^3 | c_k |  e^{2 \pi i(\beta_kx+d_k)}g(x \pm n+\alpha_k).
\end{eqnarray}
Therefore, using the notation of Lemma 5.3, for each $(x,n) \in \mathbb{R} \times  \mathbb{N}$, we compute the following: 
\begin{eqnarray}
 \Delta_{03}(x,n) &=& | c_1 |  e^{2 \pi i(\beta_1x+d_1) }\Delta_{13}(x,n)\\&+& |c_2 | e^{2 \pi i(\beta_2x+d_2) }\Delta_{23}(x,n); \nonumber\\
 \Delta_{02}(x,n) &=&| c_1 |  e^{2 \pi i(\beta_1x+d_1) }\Delta_{12}(x,n)\\&-& |c_3 | e^{2 \pi i(\beta_3x+d_3) }\Delta_{23}(x,n); \nonumber 
\end{eqnarray}
and
\begin{eqnarray}
\Delta_{01}(x,n) &=& -| c_2 |  e^{2 \pi i(\beta_2x+d_2) }\Delta_{12}(x,n)\\ &-& |c_3 | e^{2 \pi i(\beta_3x+d_3) }\Delta_{13}(x,n).\nonumber
\end{eqnarray}

{\it viii.c.} Assume that $\beta_1 \neq 0$.

{\it viii.c.1.} If $\alpha_2 =\alpha_3$, then (5.18) is
\begin{eqnarray*}
 \Delta_{03}(x,n) = | c_1 |  e^{2 \pi i(\beta_1x+d_1) }\Delta_{13}(x,n).
\end{eqnarray*}
Therefore, we obtain the contradiction that $|\{x: e^{2 \pi i(\beta_1x+d_1) } \in \mathbb{R} \}| \neq 0$,
since, by (5.16), there is $n>0$ for which  we have $|\{x: \Delta_{03}(x,n) \}| \neq 0$,  and, by Proposition 1.1, $|\{x: e^{2 \pi i(\beta_1x+d_1) } \in \mathbb{R} \}| = 0$.

{\it viii.c.2.} If $\alpha_1 =\alpha_2<\alpha_3$, then (5.19) is
\begin{eqnarray*}
 \Delta_{02}(x,n) = -|c_3 | e^{2 \pi i(\beta_3x+d_3) }\Delta_{23}(x,n),
\end{eqnarray*}
and so, using a similar argument to the steps in case {\it viii.c.1.}, $\beta_3 \neq 0$ leads to a contradiction. Therefore, we can assert that  $\beta_3=0$, and so we also have $e^{2 \pi id_3}=-1$.

Meanwhile, (5.18) is
\begin{eqnarray*}
   \Delta_{03}(x,n) &=& [c_1  e^{2 \pi i\beta_1x }+ c_2  e^{2 \pi i\beta_2x }]\Delta_{23}(x,n),
\end{eqnarray*}
and so $|\{x: c_1e^{2 \pi i\beta_1x }+ c_2  e^{2 \pi i\beta_2x } >0 \}|\neq 0$, since, by (5.16), there is $n>0$ for which $|\{x: \Delta_{03}(x,n)>0 \}|\neq 0$. Therefore, using Proposition 1.1, we obtain that $c_2=\overline{c_1}$ and $\beta_2=-\beta_1$. Thus, in this case,
 for $x=(1/2 -d_1)/\beta_1$ and $n=0$, (5.17) leads to the contradiction,
\begin{eqnarray*}
   g(x) = -|c_1|  g(x+\alpha_1)-|c_1|g(x+\alpha_2) - |c_3|  g(x+\alpha_3).
\end{eqnarray*}

{\it viii.c.3.} If $0=\alpha_1 <\alpha_2<\alpha_3$, then (5.18) is
\begin{eqnarray*}
 [e^{-2 \pi i(\beta_2x+d_2)}-| c_1 |  e^{2 \pi i((\beta_1-\beta_2)x+d_1-d_2) }]\Delta_{03}(x,n)= |c_2 | \Delta_{23}(x,n),
\end{eqnarray*}
and so, using (5.16), we obtain  that
$$|\{x: e^{-2 \pi i(\beta_2x+d_2)}-| c_1 |  e^{2 \pi i((\beta_1-\beta_2)x+d_1-d_2) } \in \mathbb{R} \}| \neq 0.$$
Therefore, using Proposition 1.1 and the fact that $\beta_1 \neq 0$, we obtain that $\beta_1=2\beta_2$ and $c_1=-e^{4 \pi i d_2}$. Similarly, using (5.19), we obtain that $\beta_1=2\beta_3$. Therefore, we have $\beta_2=\beta_3$, and so, using(5.20) and (5.16), we obtain that
$e^{2 \pi id_3 }= - e^{2 \pi id_2 }$. Consequently, for $x=-d_2/\beta_2$,  (5.17) yields the equality
\begin{eqnarray}
2g(x-n) = |c_2|g(x -n+\alpha_2)- |c_3|g(x  -n+\alpha_3),  \nonumber
\end{eqnarray}
and so, for $n$ large enough, $|c_2|/|c_3|> g(x  -n+\alpha_3)/g(x -n+\alpha_2)\geq 1$. Meanwhile, for $x=(1/2-d_2)/\beta_2$,  (5.17) yields the equality
\begin{eqnarray}
2g(x+n) = -|c_2|g(x +n+\alpha_2)+ |c_3|g(x  +n+\alpha_3)],  \nonumber
\end{eqnarray}
and so, for $n$ large enough, $|c_2|/|c_3|< g(x +n+\alpha_3)/g(x +n+\alpha_2)\leq 1$. Thus, we obtain the contradiction, $1<|c_2|/|c_3|<1$.

{\it viii.c.4.} Assume that $0<\alpha_1 <\alpha_2<\alpha_3$. In this case, we assert that
\begin{eqnarray}
 \forall a \in E, \quad  \Delta_{23}(a,n)>0 \quad \mbox{ for each $n$ large enough,}
\end{eqnarray}
where $E=\{ x: e^{2 \pi i(\beta_1x+d_1) }$ is not a positive number$\}$. Indeed, if $a \in E$ and $\Delta_{23}(a,n)=0$ for some $n$ large enough, then, by (5.18), $\Delta_{03}(a,n)=0$, and so, by Lemma 5.3, $g$ is constant on $[a+n,a+n+\alpha_3]$. Therefore, by Lemma 5.5, the HRT conjecture holds for $\mathcal{G}(g, \{(-\alpha_k, \beta_k\}_{k=0}^3)$.

Now, for $x=(\pm 1/2 -d_1)/\beta_1)$, (5.18) is
\begin{eqnarray*}
   \Delta_{03}(x,n) =-| c_1 | \Delta_{13}(x,n) + |c_2 | e^{2 \pi i(\beta_2(\pm1/2 -d_1)/\beta_1)+d_2) }\Delta_{23}(x,n);
\end{eqnarray*}
and since, by (5.21), we have $\Delta_{23}(x,n)>0$  for  $n$ large enough,   then  we obtain that  $e^{2 \pi i d_2}=e^{2 \pi i\beta_2(d_1-1/2)/\beta_1}=e^{2 \pi i\beta_2(d_1+1/2)/\beta_1}$. Therefore, $e^{2 \pi i\beta_2/\beta_1}=1$, and so $\beta_2 =p\beta_1$, for some integer $p$.  Thus, for $x=(1/4-d_1)/\beta_1$, (5.18) is
\begin{eqnarray}
   \Delta_{03}(x,n) &=& | c_1 | \Delta_{13}(x,n)i   + | c_2 |\Delta_{23}(x,n)(-i)^p.
\end{eqnarray}
By (5.21), once again,  we have $ \Delta_{23}(x,n)>0$ for $n$ large enough, and so we also have $ \Delta_{03}(x,n), \Delta_{13}(x,n) >0$ for the same $n$. Therefore, in this case, (5.22) leads to a contradiction.

{\it viii.d.} Using similar arguments, we obtain a desired contradiction for each of the remaining cases of the set $\{\beta_1, \beta_2, \beta_3\}$.

\end{proof}


\section*{Acknowledgements}
The authors gratefully acknowledge the support from the AFOSR-MURI Grant FA9550-05-1-0443. The first named author is also appreciative of the support of ARO-MURI Grant W911NF-09-1-0383 and NGA Grant HM-1582-08-1-0009. The authors  also had the good fortune to obtain expert background and insights from Chris Heil. We note that the results herein were all obtained by early 2008. The authors were motivated to publish upon hearing a beautiful lecture in 2012 by Darrin Speegle reporting on his work ~\cite{Bow2} with Marcin Bownik on the HRT conjecture. Finally, we received invaluable assistance from Travis Andrews.


\end{document}